\def\dom{\mathrm {dom}\,}
\newcommand{\eps}{\epsilon}
\newtheorem{lemma}{\bf Lemma}[section]
\newtheorem{theorem}{\bf Theorem}[section]
\newtheorem{corollary}{\bf Corollary}[section]
\newtheorem{remark}{Remark}[section]
\newtheorem{example}{Example}[section]
\title[H\"older continuity and Fourier asymptotics]{H\"older continuity and Fourier asymptotics of spectral measures for 1D Schr\"odinger operators under exponentially decaying perturbations}
\author{Moacir  Aloisio, Silas L. Carvalho and C\'esar R. de Oliveira}
\address{Email: ec.moacir@gmail.com, DME, UFVJM, Diamantina, MG, 39100-000 Brazil}
\address{Silas L. Carvalho. Email: silas@mat.ufmg.br, DM, UFMG, Belo Horizonte, MG, 30161-970 Brazil}
\address{C\'esar de Oliveira. Email: oliveira@ufscar.br,  DM,   UFSCar, S\~ao Carlos, SP, 13560-970 Brazil}
\begin{document}

\maketitle

\begin{abstract}
We establish $\frac{1}{2}$-H\"older continuity, or even the Lipschitz property, for the spectral measures of half-line discrete Schr\"odinger operators under suitable boundary conditions and exponentially decaying small potentials. These are the first known examples, apart from the free case, of Schr\"odinger operators with Lipschitz continuous spectral measures up to the spectral edge, and it was obtained as a consequence of the Dirichlet boundary condition. Notably, we show that the asymptotic behavior of the time-averaged quantum return probability, either $\log (t) / t$ or $1 / t$, as in the case of the free Laplacian, remains unchanged in this setting. Furthermore, we prove the persistence of the purely absolutely continuous spectrum and the $\frac{1}{2}$-H\"older continuity of the spectral measures for (Diophantine) quasi-periodic operators under exponentially decaying small perturbations. These results are optimal and hold for all energies, up to the border of the absolutely continuous spectrum.
\end{abstract}

\ 

MSC (2020): primary 47A10. Secondary: 28A80, 81Q10.

\tableofcontents


\section{Introduction}\label{sectIntrod}
\ 

We study fine properties, such as the asymptotic behavior of the Fourier transform and H\"older continuity, of absolutely continuous spectral measures of one-dimensional Schr\"odinger operators $H:\ell^2(\mathbb{Z})\rightarrow \ell^2(\mathbb{Z})$ given by the action, $n \in \mathbb{Z}$,  
\begin{equation}\label{operator}
(Hu)(n) = (\triangle u)(n) + V(n) u(n) = u(n+1) + u(n-1) + V(n) u(n),    
\end{equation}
with potential $V \in \ell^\infty(\mathbb{Z}, \mathbb{R})$, as well as of absolutely continuous spectral measures of one-dimensional Schr\"odinger operators $H_\beta:\ell^2(\mathbb{Z}^+)\rightarrow \ell^2(\mathbb{Z}^+)$ in the half-line, $n \geq 1,$
\begin{equation}\label{halfoperator}
(H_\beta  u)(n) = (\triangle u)(n) + V(n) u(n) = u(n+1) + u(n-1) + V(n) u(n),    
\end{equation} 
with the  boundary condition 
\[u(0) \cos(\beta) + u(1) \sin(\beta) = 0,\]
\(-\frac{\pi}{2} < \beta < \frac{\pi}{2}\), and with potential $V \in \ell^\infty(\mathbb{Z}^+, \mathbb{R})$; for $\beta=\frac{\pi}{2}$, one has $H_{\pi/2}:\ell^2(\mathbb{Z}^+\setminus\{1\})\rightarrow \ell^2(\mathbb{Z}^+\setminus\{1\})$ given by
\begin{equation} \label{halfoperator1}
(H_{\pi/2} u)(n) =
\begin{cases}
u(n+1) + u(n-1) + V(n)u(n), & n \geq 3, \\
u(3) + V(2)u(2), & n = 2,
\end{cases}
\end{equation} 
with potential $V \in \ell^\infty(\mathbb{Z}^+\setminus\{1\}, \mathbb{R})$; here, $\mathbb{Z}^+:=\{n \in \mathbb{Z} \mid \, n>0\}$. These operators will be considered under exponentially decaying small perturbations.

Let \( H \)  (or \( H_\beta \)) be as above, and let \( b \in \ell^1(\mathbb{Z}, \mathbb{R}) \)  (\( b \in \ell^1(\mathbb{Z}^{+}, \mathbb{R}) \), or \(b \in \ell^1(\mathbb{Z}^+\setminus\{1\}, \mathbb{R})\))  be a perturbation. It is known that the essential spectrum of these operators is preserved under such perturbations, since these perturbations are compact \cite{Oliveira}.

Although determining the essential spectrum is relatively straightforward,  understanding the spectral nature of the perturbed operator \( H +b\) (or  \( H_\beta +b \))   is far from trivial. A much more subtle issue is the preservation of the absolutely continuous spectrum. By Birman-Rosenblum-Kato Theorem \cite{ReedSimonIV}, the essential support of the absolutely continuous spectrum of the original operator is also preserved under \( \ell^1 \) perturbations. However, this result does not guarantee that the spectral type remains purely absolutely continuous, even if the unperturbed operator is. Under such perturbations, it is possible for the common essential spectrum to support a singular component \cite{Damanik4}.

It is worth mentioning that this issue goes back mainly to contributions by Christ-Kiselev \cite{Christ,Kiselev1}, Deift-Killip \cite{Deift}, Remling \cite{Remling1,Remling2}, and Killip-Simon \cite{KillipSimon}. More recently, this problem was discussed in detail in the context of quasi-periodic operators by Damanik {\it et al.} in~\cite{Damanik4}, where it is observed that understanding the persistence of the spectral type under decaying perturbations requires a refined analysis.

In this work, we take a further step by developing a new---and, to the best of our knowledge, the first---framework to jointly study the persistence of two fine properties of the spectral measure: the asymptotic behavior of its Fourier transform and H\"older continuity, within the context of small perturbations that decay exponentially.  Our main results (Theorems \ref{app2} and \ref{app3}) hold for all energies, up to the border of the absolutely continuous spectrum, and so not only resolve a major open problem but also offer new insights into these properties, potentially serving as a foundation for addressing other subtle questions concerning the interplay between disorder, regularity, and spectral type. Indeed, we introduce novel arguments that connect central ideas in the spectral theory of Schr\"odinger operators, such as:

\begin{itemize}
  \item The Last-Simon description of the (Lebesgue) essential support of the absolutely continuous spectrum of a Schr\"odinger operator \cite{LastSimon}.
  \item An approximation argument for spectral measures obtained by Carmona, Krutikov, and Remling \cite{Carmona1,Carmona2,Krutikov}.
  \item Arguments on H\"older continuity of spectral measures developed by Avila and Jitomirskaya \cite{AvilaUaH}, which are related to the well-known Jitomirskaya-Last inequality (Theorem 1.1 in \cite{Jitomirskaya}).
\end{itemize}

 { \bf It is worth underlining that the existing qualitative frameworks, including meromorphic characterizations of the Weyl–Titchmarsh $m$-function \cite{Kozhan2012,Kozhan2014}, do not control either of these properties quantitatively, let alone their joint persistence (see Remark \ref{remThm1} (ii) below).}

The spectral and dynamical properties of Schr\"odinger operators under decaying perturbations have been extensively studied, with foundational results dating back to the 1910s. For a comprehensive review of the topic, see \cite{Behncke,Damanik4,Deift,Delyon,Denisov,FrankLarson,Killip1,Killip,KillipSimon,Kiselev1,Kiselev2,Remling1,Remling2,White}.

In Subsection~\ref{contex}, we provide context and present our main results (Theorems~\ref{app2} and~\ref{app3}). Subsection~\ref{notations} outlines the structure of the text and introduces some notation. 


\subsection{Contextualization and main results}\label{contex}

\

The spectral theory of Schr\"odinger and Dirac operators involves two fundamental quantities: the Lyapunov exponent (LE) and the integrated density of states (IDS), whose regularity plays an important role in the analysis of these operators \cite{Avila1,AvilaTen,AvilaKrikorian,ALSZ24,Avron,Damanik00,DG13,Goldstein,Prado,Thouless,Zalczer}. For instance, studies by Damanik {\it et al.} \cite{Damanik3} indicate  that the H\"older continuity of both the LE and the IDS is essential for understanding certain topological structures of the spectrum of quasi-periodic Schr\"odinger operators, known as homogeneity. Moreover, the regularity of the Lyapunov exponent is a central question in dynamical systems, as discussed in \cite{Viana} (see also the references therein).

In the context of quasi-periodic Schr\"odinger operators, it is well known that the IDS corresponds to the accumulation function of the density of states measure, which is the average of the spectral measure with respect to the phase. A more subtle and complex issue, however, concerns the regularity of the distribution of individual spectral measures, an aspect that is rarely addressed in the literature and requires delicate and refined analytical techniques.

Avila and Jitomirskaya, under the assumption that the frequency is Diophantine, obtained H\"older continuity for one-frequency quasi-periodic Schr\"odinger operators in \cite{AvilaUaH}, proving $\frac{1}{2}$-H\"older continuity (for all energies when the coupling constant is sufficiently small) by using the asymptotic analysis of a family of self-adjoint operators  (see \eqref{operators} ahead), defined through the transfer matrix of the operator; this, in turn, is related to the Borel transform of the spectral measure (the so-called Weyl-Titchmarsh $m$-function) through the Jitomirskaya-Last inequality \cite{Jitomirskaya}. More specifically, this approach is based on the analysis of cocycles and a new version (see Lemma \ref{lemmaAvila} ahead) of the Jitomirskaya-Last inequality. It is worth noting that in \cite{Zhao}, Zhao performed a similar analysis and obtained analogous results for quasi-periodic operators with multiple frequencies. For results involving Liouvillean frequencies, see \cite{Liu}. A discussion involving these arguments in the context of dynamical bounds in quantum dynamics can be found in~\cite{Damanik,Killip}.

In this work, from the perspective of the discussion above, we establish H\"older continuity of the spectral measure for  Schr\"odinger operators with exponentially decaying small potentials (usually, the H\"older exponent comes from the behavior of the spectral measure at the border of the absolutely continuous spectrum, where the analysis became genuinely subtle; see Remark~\ref{remNBastaTrunc} ahead). As a consequence of a fine analysis of the spectral measures at the edge of the absolutely continuous spectrum and the introduction of a sharp Strichartz-type inequality (Theorem \ref{Strichartztheorem}), we also prove that the asymptotic behavior  of the time-averaged quantum return probability of the free Laplacian is preserved in this setting. 

Let \( C([-2,2]) \) denote the space of complex-valued continuous functions on the interval \([-2,2]\). Let $\mu$ be a $\sigma$-finite positive Borel measure on~$\mathbb{R}$ and $\alpha \in [0,1]$. We recall that $\mu$ is  $\alpha$-H\"older continuous  if there exists a constant $\gamma>0$ such that for each interval $I$ with $|I| < 1$, one has $\mu(I) < \gamma\, |I|^\alpha,$ where $|\cdot|$ denotes the Lebesgue measure on~$\mathbb{R}$. When $\alpha = 1$, the measure is also said to be Lipschitz continuous.

Next, we present our main result.

\begin{theorem}\label{app2} Let $\gamma>0,$ $a>3,$ $\tau > 4$, and let $b \in \ell^{\infty}(\mathbb{Z}^+,\mathbb{R})$ be such that  
\[
|b(n)| \leq \gamma a^{-\tau n}, \qquad n \geq 1.
\]
Then, there exists $\epsilon>0$, which depends only on $b$, such that for each $|\kappa|<\epsilon$, the following hold.

\noindent $1.$ The half-line Schr\"odinger operator \( H_{\pi/4}^\kappa = \triangle + \kappa b \), with the following boundary condition $u(0) \cos(\pi/4) + u(1) \sin(\pi/4) = 0$, satisfies:

\item[{(i)}]  its spectrum is purely absolutely continuous on  $[-2,2]$;

\item[(ii)] for each \( \psi = f(H_{\pi/4}^\kappa)\delta_1 \), with \( f \in C([-2,2]) \), the associated spectral measure \( \mu_\psi^{\pi/4} \) is \( \frac{1}{2} \)-H\"older continuous;
    
\item[(iii)] for each \( \psi = f(H_{\pi/4}^\kappa)\delta_1 \), with \( f \in C([-2,2]) \),  there exists $\gamma^\prime>0$ such that for each $t>2$, the time-averaged quantum return probability satisfies
\begin{eqnarray*}
\frac{1}{t}\int_0^t |\langle \psi, e^{-isH_{\pi/4}^\kappa} \psi \rangle|^2  \, ds   \leq  \gamma' \frac{\log(t)}{t}. 
\end{eqnarray*}

\noindent $2.$ The half-line Schr\"odinger operator \( H_+^\kappa = \triangle + \kappa b \), with Dirichlet boundary condition $u(0) = 0$, has the following properties:

\item[{(i)}] the spectrum of $H_+^\kappa$ is purely absolutely continuous on  $[-2,2]$;

\item[(ii)] for each \( \psi = f(H_+^\kappa)\delta_1 \), with \( f \in C([-2,2]) \), its spectral measure  \( \mu_\psi^+ \) is Lipschitz continuous;
    
\item[(iii)] for each \( \psi = f(H_+^\kappa)\delta_1 \), with \( f \in C([-2,2]) \), there exists $\eta>0$ such that for each $t>0$, the time-averaged quantum return probability satisfies
\begin{eqnarray*}
\frac{1}{t}\int_0^t |\langle \psi, e^{-isH_+^\kappa} \psi \rangle|^2  \, ds   \leq  \eta \frac{1}{t}. 
\end{eqnarray*}

\end{theorem}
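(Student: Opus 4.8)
The plan is to reduce everything to a careful analysis of the Weyl–Titchmarsh $m$-function of the perturbed half-line operator, exploiting that the perturbation $\kappa b$ is exponentially decaying and small. First I would recall the key structural fact: for the free half-line Laplacian with the relevant boundary condition, the spectral measure on $[-2,2]$ is purely absolutely continuous with an explicitly known density, which (after the change of variables $E = 2\cos\theta$) behaves like $\sqrt{4-E^2}$ up to positive constants for Dirichlet, and like $1/\sqrt{4-E^2}$ for the $\beta=\pi/4$ boundary condition near the spectral edges. This dichotomy at the edge is precisely what distinguishes Lipschitz from $\tfrac12$-H\"older behavior, and it is the content of Remark~\ref{remNBastaTrunc}. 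The strategy is then: (a) show that the exponentially decaying perturbation changes the $m$-function only by a bounded holomorphic correction that does not destroy these edge asymptotics; (b) translate $m$-function bounds into measure bounds; (c) translate measure bounds into the return-probability estimate via the Strichartz-type inequality (Theorem~\ref{Strichartztheorem}).

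For step (a) I would use the approximation argument of Carmona–Krutikov–Remling to express the spectral measure of $H_{\pi/4}^\kappa$ (resp. $H_+^\kappa$) as a limit of spectral measures of finite truncations, together with the cocycle / transfer-matrix analysis of Avila–Jitomirskaya: the family of positive self-adjoint operators in \eqref{operators} built from the transfer matrices controls $\mathrm{Im}\, m$ through the Jitomirskaya–Last inequality (Lemma~\ref{lemmaAvila}). Because $|b(n)| \le \gamma a^{-\tau n}$ with $a>3$ and $\tau>4$, the transfer matrices of $\triangle + \kappa b$ differ from the free ones by a telescoping product of factors $\mathbf{1} + O(\kappa a^{-\tau n})$, whose product converges and is $1 + O(\kappa)$ uniformly in $E \in [-2,2]$ and uniformly up to the edge; this is where the exponential decay (rather than mere $\ell^1$ decay) is essential, since it gives control that is uniform all the way to $E = \pm 2$. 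Hence for $|\kappa|$ small enough (depending only on $b$) the perturbed $m$-function satisfies two-sided bounds $c \cdot m_{\mathrm{free}} \le m_\kappa \le C \cdot m_{\mathrm{free}}$ near the edges, up to bounded additive/multiplicative errors, which immediately yields purely a.c.\ spectrum on $[-2,2]$ (part (i) in both cases) and the correct modulus of continuity of $\mu_{\delta_1}$: $\tfrac12$-H\"older in the $\pi/4$ case and Lipschitz in the Dirichlet case (part (ii)).

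To pass from $\psi = \delta_1$ to general $\psi = f(H^\kappa)\delta_1$ with $f \in C([-2,2])$, note $d\mu_\psi = |f|^2 \, d\mu_{\delta_1}$, and since $f$ is bounded on the compact spectrum, $\mu_\psi(I) \le \|f\|_\infty^2 \, \mu_{\delta_1}(I)$, so the H\"older/Lipschitz bound is inherited with a rescaled constant; this handles (ii) in full generality. Finally, for (iii), I would invoke the sharp Strichartz-type inequality of Theorem~\ref{Strichartztheorem}, which bounds the time-averaged return probability $\frac1t\int_0^t |\langle\psi, e^{-isH}\psi\rangle|^2\,ds$ in terms of the modulus of continuity of $\mu_\psi$: a Lipschitz measure gives the $1/t$ rate (Dirichlet case), while a measure that is $\tfrac12$-H\"older but with density blowing up like $(4-E^2)^{-1/2}$ at the edge gives the extra logarithmic factor $\log(t)/t$ (the $\pi/4$ case), exactly as for the free Laplacian. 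The main obstacle I anticipate is step (a): obtaining the edge asymptotics of $m_\kappa$ \emph{uniformly up to $E = \pm 2$}, since the transfer-matrix norms and the operators in \eqref{operators} degenerate at the spectral edge, and one must show the exponentially small perturbation does not spoil the delicate $\sqrt{4-E^2}$ vs.\ $1/\sqrt{4-E^2}$ behavior — this is where the hypotheses $a>3$, $\tau>4$ are presumably consumed, balancing the perturbation size against the growth of the free transfer matrices near the edge.
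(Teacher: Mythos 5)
Your outline matches the paper's architecture in broad strokes (telescoping transfer-matrix comparison, the Carmona--Krutikov--Remling approximation, domination by the free measure, and a Strichartz-type inequality sensitive to the edge singularity), but the central step is left unresolved, and one of your claims is an overreach. The difficulty you yourself flag at the end --- uniformity up to $E=\pm2$ --- is precisely the point where the argument cannot proceed as you describe. Knowing that $T^\kappa(x,n,0)$ and $T_\triangle(x,n,0)$ differ by a factor $I+O(\kappa)$ \emph{as matrices} does not give a multiplicative comparison of $\|T^\kappa(x,n,0)u_\beta\|$ with $\|T_\triangle(x,n,0)u_\beta\|$ uniformly in $n$ and $x$: the error term is of size $O(\kappa)\|T_\triangle(x,n,0)\|\sim O(\kappa)(4-x^2)^{-1/2}$, while $\|T_\triangle(x,n,0)u_\beta\|$ can stay bounded (Dirichlet) or vanish like $(2+x)^{1/2}$ ($\beta=\pi/4$) at the edge, so the ``error'' can dominate the main term exactly where it matters. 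The paper resolves this by exploiting that the correction sits on the \emph{right} of the transfer matrix, $T^\kappa\approx T_\triangle(I-Q(x))$, so that $(I-Q(x))u_{\pi/4}$ is a slightly rotated, $x$-dependent vector $\|v(x)\|\,u_{\beta(x)}$; one then needs a separate lemma (Lemma~\ref{mainmainlema}) identifying the vague limit of $\frac1\pi\|T_\triangle(x,n,0)u_{\beta(x)}\|^{-2}dx$ for continuously varying $\beta(x)$ and dominating the resulting density by that of $\mu^{\triangle_{\pi/4}}+\mu^{\triangle_{-\pi/4}}$. Relatedly, your claimed two-sided bound $c\,m_{\mathrm{free}}\le m_\kappa\le C\,m_{\mathrm{free}}$ against the free $m$-function \emph{of the same boundary condition} is false in general: because the perturbation rotates the boundary vector, the $\pi/4$ measure can even become Lipschitz under finite-rank perturbations (as the paper remarks), so only a one-sided domination holds --- which is fortunately all that is needed.

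A second, smaller issue: Theorem~\ref{Strichartztheorem} (the classical Strichartz/Last inequality) yields only $t^{-1/2}$ from $\tfrac12$-H\"older continuity, not $\log(t)/t$; Example~\ref{mainexample} and Remark~\ref{remNBastaTrunc} are there precisely to show that the H\"older exponent alone cannot produce the logarithmic rate. You correctly describe in words that the \emph{structure} of the edge singularity (a single power-law blow-up of type $\tfrac12$ at each endpoint) is what gives $\log(t)/t$, but that requires the new weak-singularity inequality (Theorem~\ref{app1}), proved via an explicit Fourier/Gaussian-regularization computation and the incomplete gamma function, not the classical theorem you cite. As written, your step (c) does not close.
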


\begin{remark} \label{remThm1}
\end{remark}
 
\begin{enumerate}

\item [(i)] We note that works such as \cite{Kozhan2012,Kozhan2014} provide precise qualitative characterizations of exponentially decaying perturbations of Jacobi matrices
in terms of meromorphic continuation properties of the Weyl--Titchmarsh $m$-function. These results describe the global analytic structure of the $m$--function, but they do not in general imply quantitative regularity of the Radon--Nikodym derivative of the spectral measure, nor do they yield uniform estimates near the edges of the essential spectrum.

{ \bf A simple illustration is given by the free case:} although the $m$--function (see the proof of Lemma \ref{lemmaderrando} ahead) satisfies the meromorphicity assumptions of Theorem 3.8 in \cite{Kozhan2012} for different boundary conditions (e.g.\ Dirichlet and mixed boundary conditions), the corresponding Radon--Nikodym derivatives (see \eqref{derivadalaplacia} below) exhibit substantially different edge behavior. This highlights a common confusion between qualitative and quantitative analysis. The present paper addresses precisely this quantitative regime, establishing fine regularity of the Radon--Nikodym derivative and deriving dynamical consequences that are not accessible through purely meromorphic characterizations. This distinction is one of the main reasons why we regard our contribution as genuinely new and fundamentally original.

\item [(ii)] Avila and Jitomirskaya observed (Remark 4.4. in \cite{AvilaUaH}) that it would be natural to expect quantitative estimates for the modulus of absolute continuity of spectral measures, with the critical exponent $\frac{1}{2}$, reflecting square-root singularities at spectral gap boundaries. Despite this clear heuristic, no general method had previously been available to establish such estimates uniformly up to the edges of the absolutely continuous spectrum. In this work, we develop a quantitative framework that resolves this problem for exponentially decaying perturbations and, crucially, leads to optimal dynamical estimates that were not accessible before.

\item [(iii)]  Theorem~\ref{app2}, part~2.\ $(ii)$, establishes  the first known non-artificial  examples of Schr\"odinger operators, apart from the free case, whose spectral measures are Lipschitz continuous up to the edge of the absolutely continuous spectrum. Actually, these Schr\"odinger operators have a bounded density in compact subsets of the open interval $(-2,2)$, so clearly Lipschitz there, but the technical difficulties to extend this to the whole set $[-2,2]$ is overcome here; although this has its own interest, it is crucial to have optimal dynamical behavior, as presented in part~2.~$(iii)$ (see Remark~\ref{remNBastaTrunc} ahead). 

\item [(iv)]  We note that Lukic in~\cite{lukic2019} has considered certain Jacobi matrices with eventually increasing sequences of diagonal and off-diagonal  parameters and, under some technical conditions on the off-diagonal parameters (perturbations of the free case), the asymptotic bahavior of the spectral density was described up to the border point~$2$. It was a generalization of results of~\cite{kreimerLS2009}. No dynamical consequence was addressed. 

\item[(v)] It is well known that $H_{\pi/4}^\kappa$ and $H_+^\kappa$ can have eigenvalues outside the interval $[-2,2]$. For this reason,  we are only interested in \( [-2,2] \).

\item [(vi)] The proof of Theorem \ref{app2} relies on three main ingredients: an asymptotic comparison between the transfer matrix of the free Laplacian and that of the given operator (see Theorem \ref{transferthm} ahead);  an approximation argument for spectral measures in the vein of Carmona, Krutikov, and Remling \cite{Carmona1,Carmona2,Krutikov} (see Theorem \ref{maintheorem0202} ahead); and a sharp Strichartz-type inequality (see Theorem \ref{app1} ahead).

\item [(vii)] It is well known that for each \( -\frac{\pi}{2} < \beta < \frac{\pi}{2} \), the Radon-Nikodym derivative of the spectral measure of the discrete Laplacian \( \triangle_\beta \) on \( \ell^2(\mathbb{Z}^+) \), with boundary condition
\[
u(0) \cos(\beta) + u(1) \sin (\beta) = 0,
\]
 associated with \( \psi = \delta_1 \), is given by~\cite{Carmona2}
\begin{equation}\label{derivadalaplacia}
\rho_{\delta_1}^\beta(x) = \frac{\cos^2(\beta) \sqrt{4 - x^2}}{\pi(2 + \sin(2\beta) x)}, \quad x \in (-2,2)
\end{equation}
(for the reader's convenience, we present a proof of this result in Appendix \ref{appendix}). It is then immediate to check that parts~1-2 in Theorem~\ref{app2} hold for \( \kappa = 0 \). Since \( \delta_1 \) is cyclic in this context, Theorem~\ref{app2} shows, in a certain sense, that the spectral and dynamical properties of \( H_{\pi/4}^\kappa \) (\( H_+^\kappa \)) and of \( \triangle_{\pi/4} \) (respectively \( \triangle_0 \)), restricted to the interval \( [-2,2] \), are identical. This reveals a surprising form of rigidity in the absolutely continuous spectrum for exponentially decaying small perturbations, which, to the best of our knowledge, has not been previously described in the literature. Notably, the rigidity of the absolutely continuous spectrum is closely related to Schr\"odinger's Conjecture~\cite{AvilaS} and its various formulations~\cite{Bruneau}. In this context, our results provide a new perspective on the existing literature in the field.

\item [(viii)] For \( \beta = -\pi/4 \), the result in the statement of Theorem~\ref{app2} coincides with that for \( \beta = \pi/4 \) but with a singularity at $x=2$. For other boundary conditions, the behavior aligns with that of the Dirichlet case. This is directly related to the behavior of the Radon-Nikodym derivative in~\eqref{derivadalaplacia} near \( x = \pm 2 \). Therefore, we restrict our analysis to these boundary conditions. Our arguments seem to extend naturally to the whole-line problem (here, we refer to Theorem~\ref{app2}, part~1-$(iii)$), and the result seems the same as for \( \beta = \pi/4 \), but with singularities at \( x = \pm 2 \). We do not pursue this matter here. Theorem~\ref{app2}, parts~1. $(i)$-$(ii)$, are also proved here for the whole-line case, including the absence of eigenvalues (see Theorem~\ref{app3} below for \( \lambda = 0 \)).
 
\item [(iv)] We note that $\frac{1}{2}$-H\"older continuity is not sufficient to obtain the ``power-law'' optimal decay rate, \(\log(t)/t\), for the time-averaged quantum return probability. Consider the upper (lower) correlation dimensions of the spectral measure \(\mu_\psi\), which are defined as  
\[
D^+(\mu_{\psi})(D^-(\mu_{\psi})) = \limsup_{\epsilon \downarrow  0} ( \liminf_{\epsilon \downarrow  0} ) \frac{\log \int_{\mathbb{R}} \mu_{\psi}(x-\epsilon,x+\epsilon) \, d\mu_{\psi}(x) }{\log \epsilon}.
\]
It is known that such quantities govern the power-law decay rates of the time-averaged quantum return probability at rate \(t^{-D^\pm}\) \cite{AloCarvCRdOlivro,Barbaroux,Holschneider}. In particular, it suffices for \(D^-(\mu_{\psi})<1\) (which is compatible with $\frac{1}{2}$-H\"older continuity) to preclude a behavior as \(\log(t)/t\). We also note that since $0\leq D^\pm (\mu_{\psi})\leq 1$ \cite{Barbaroux,Holschneider}, one has that the result in the statement of Theorem \ref{app2}  in part~1.\ $(iii)$ is power-law optimal; part~1.\  $(ii)$ is also optimal in the sense that, for a general potential as stated in the theorem, one cannot obtain better than $\frac{1}{2}$-H\"older continuity (simply take $b \equiv 0$).

\item[(x)] We note that numerical simulations in \cite{Mosserit} indicate that, for certain quasi-periodic operators, the time-averaged quantum return probability exhibits the  power-law optimal decay rate \(\log(t)/t\). We do not know a proof of this fact. By taking into account the technique developed in this work, the main difficulty in extending the result to these quasi-periodic models lies in handling the singularities of the spectral measures, of which there can be infinitely many \cite{Li}.

\item [(xi)] To  prove Theorem~\ref{app2}, it is sufficient to assume \( a > 2 \). However, since certain technical results are used in the proofs of both Theorems~\ref{app2} and~1.2 (see below), and since we present them in a unified form, we adopt the stronger assumption \( a > 3 \).
\end{enumerate}

For Schr\"odinger operators \( H = \triangle + V \) on $\ell^2(\mathbb{Z})$, with bounded potentials \( V \), we also present an asymptotic formula (Theorem \ref{transferthm2}) for a family  of positive self-adjoint operators (see \eqref{operators} ahead) associated with the transfer matrix of \( H^\kappa= H+ \kappa b \),  where \(b \) denotes  certain perturbations that decay exponentially fast. This family is intrinsically linked to the analysis of the continuity of the spectral measure through a version of the  Jitomirskaya-Last inequality developed by Avila and Jitomirskaya in \cite{AvilaUaH} (see Lemma \ref{lemmaAvila} ahead). We then prove the persistence of \( \frac{1}{2} \)-H\"older continuity of the spectral measure for quasi-periodic operators under exponentially decaying small perturbations. The result also holds for all energies. 

Next, we present a result on exponentially decaying perturbations of analytic quasi-periodic Schr\"odinger operators \( H = H_{\lambda v, \alpha, \theta} \), defined on \(\ell^2(\mathbb{Z})\), by the action
\[
(H u)(n) = u(n+1) + u(n-1) + \lambda v(\theta + n\alpha)u(n), \qquad n\in\mathbb{Z},
\]
where \( v \in  C^\omega(\mathbb{R}/\mathbb{Z}, \mathbb{R}) \) is the potential (\( C^\omega(\mathbb{R}/\mathbb{Z}, \mathbb{R}) \) stands for real-valued analytic functions on the torus), \( \lambda \in \mathbb{R} \) is the coupling constant, \( \alpha \in \mathbb{R}\setminus \mathbb{Q} \) is the frequency and \( \theta \in \mathbb{R} \) is the phase. Note that the case \( v(x) = 2 \cos(2\pi x) \) corresponds to the almost Mathieu operator.

We recall that a frequency \( \alpha \in \mathbb{T} \) is called Diophantine, denoted by \( \alpha \in DC(\eta, \tau) \), if for  constants \( \eta > 0 \) and \( \tau > 0 \) one has
\[
\left| n\alpha - j \right| > \frac{\eta}{|n|^\tau}, \quad \forall n \in \mathbb{Z} \setminus \{0\}, \quad \forall j \in \mathbb{Z}.
\]
The set of all Diophantine numbers is defined as
\[
DC := \bigcup_{\eta>0, \tau>0} DC(\eta, \tau).
\]
 
We shall prove the following result.

\begin{theorem}\label{app3}  Let \( v \in C^\omega(\mathbb{R}/\mathbb{Z}, \mathbb{R}) \). Then,  there exists \( \lambda_0 = \lambda_0(v) > 0 \) such that for each \( |\lambda| < \lambda_0 \), each Diophantine number \( \alpha\in DC \), each $a>3$, each $\tau > 4$, each $b \in \ell^{\infty}(\mathbb{Z},\mathbb{R})$ such that
\[
|b(n)| \leq \gamma (a+2|\lambda|||v||_{\infty})^{-\tau|n|}, \quad n \in \mathbb{Z}, \, \gamma>0,
\]
and  each $|\kappa| \leq  {1}/{||b||_\infty}$, 

\item[{ (i)}]  $H^\kappa=H+\kappa b$ has purely absolutely continuous spectrum on  $\sigma(H^\kappa) =\sigma(H)$;
\item[{ (ii)}] $\mu_\psi^{\lambda v, \alpha, \theta,\kappa b}$ is $\displaystyle\frac{1}{2}$-H\"older continuous for each $\psi= f(H^\kappa)\delta_0 + g(H^\kappa)\delta_1$, with $f,g \in C(\sigma(H^\kappa))$.
\end{theorem}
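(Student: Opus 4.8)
The plan is to reduce Theorem~\ref{app3} to the same machinery used for Theorem~\ref{app2}, adapting it from the free Laplacian to an arbitrary analytic quasi-periodic background. First I would recall the structural input from Avila--Jitomirskaya~\cite{AvilaUaH}: for $|\lambda| < \lambda_0(v)$ and $\alpha$ Diophantine, the cocycle associated with $H_{\lambda v,\alpha,\theta}$ is (uniformly, on all of $\sigma(H)$) reducible to a rotation — equivalently, the transfer matrices $A_n(E)$ are bounded, with controlled growth of $\|A_n\|$ that is at worst polynomial in $n$ via the Diophantine constants, for every $E$ in the spectrum. This is exactly the property that makes the family of positive self-adjoint operators in~\eqref{operators} amenable to the asymptotic analysis, and it is the substitute for the explicit free-Laplacian computation~\eqref{derivadalaplacia} used in Theorem~\ref{app2}.

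Next I would invoke Theorem~\ref{transferthm2}, the asymptotic formula for the family~\eqref{operators} associated with $H^\kappa = H + \kappa b$ under the exponentially decaying perturbation $b$: because the perturbation decays faster than the (at most polynomial) growth of the unperturbed transfer matrices — this is the role of the hypothesis $|b(n)| \le \gamma(a + 2|\lambda|\,\|v\|_\infty)^{-\tau|n|}$ with $a>3$, $\tau>4$, which dominates the transfer-matrix norm of the background operator — the perturbed transfer matrices remain comparable to the unperturbed ones uniformly in $E \in \sigma(H)$. Plugging this into the Jitomirskaya--Last inequality in the refined form of Lemma~\ref{lemmaAvila}, one obtains a two-sided bound on the Weyl--Titchmarsh $m$-function of $H^\kappa$ of the same type that holds for $H$, and hence, by the Borel-transform characterization of the spectral measure, that $\mu_\psi^{\lambda v,\alpha,\theta,\kappa b}$ is $\tfrac12$-Hölder continuous at every energy — giving item~(ii). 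For the half-line pieces one then transfers to the whole line the usual way (Combes--Thomas / subordinacy, or directly the two-sided $m$-function bound on both half-lines), and the case $\psi = f(H^\kappa)\delta_0 + g(H^\kappa)\delta_1$ follows since $\{\delta_0,\delta_1\}$ generates the whole space and $\tfrac12$-Hölder continuity is preserved under $\psi \mapsto f(H^\kappa)\psi$ for continuous $f$ on the (compact) spectrum, exactly as in Theorem~\ref{app2}.

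For item~(i), purely absolutely continuous spectrum with $\sigma(H^\kappa) = \sigma(H)$: the equality of spectra is the Birman--Rosenblum--Kato/compactness statement (the perturbation $\kappa b$ is trace class, so the essential spectrum is preserved and, since $H^\kappa$ restricted to $[-2,2]$-type energies has bounded transfer matrices, no spectrum is added there either), while pure absolute continuity on $\sigma(H)$ is a consequence of the same transfer-matrix bounds: bounded solutions at every energy in the spectrum preclude, via Gilbert--Pearson subordinacy theory, any singular component, and the finitely-generated $m$-function bound rules out eigenvalues. The main obstacle I anticipate is exactly the one flagged in the introduction (Remark~\ref{remThm1}(viii) and Remark~\ref{remNBastaTrunc}): controlling the behavior uniformly \emph{up to the border of the absolutely continuous spectrum}, i.e.\ at the energies where $\|A_n(E)\|$ for the background $H$ may degenerate (the analytic quasi-periodic analogue of the edge $x=\pm 2$). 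The delicate point is to show that the exponential decay rate of $b$, calibrated to $a + 2|\lambda|\,\|v\|_\infty$, still dominates the transfer-matrix growth there so that Theorem~\ref{transferthm2} applies uniformly; this is where the strengthened hypotheses $a>3$, $\tau>4$ and the specific form of the decay bound are needed, and it is the technical heart of the argument. The constant $\lambda_0$ is forced to be small precisely so that the Avila--Jitomirskaya reducibility holds for \emph{all} energies, which is what lets the conclusion be stated for all energies rather than on a spectral region of positive measure.
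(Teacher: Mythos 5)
Your proposal follows essentially the same route as the paper: part (ii) is proved by combining the perturbative comparison of the operators $P_k(E)$ (Theorem~\ref{transferthm2}) with Lemma~\ref{lemmaAvila} and the Avila--Jitomirskaya bound $\|P_k(E)\|\lesssim (\det P_k(E))^{3/4}$ to control $\Im M_\kappa(E+i\epsilon)$, and part (i) via the transfer-matrix comparison together with the Last--Simon characterization of the essential support of the a.c.\ spectrum and their criterion excluding $\ell^2$ solutions (your appeal to Gilbert--Pearson subordinacy is an equivalent substitute). The only step you gloss over is the passage from the discrete sequence $\epsilon_k=(4\det P_k^\kappa(E))^{-1/2}$ to all $\epsilon\in(0,1]$, which the paper handles by monotonicity of $\epsilon\mapsto\epsilon^{-1}\Im M_\kappa(E+i\epsilon)$ together with $\epsilon_{k+1}\geq c\,\epsilon_k$; this is routine but should be included.
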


\begin{remark}
\end{remark}
\begin{enumerate} 

\item [(i)]  Theorem \ref{app3} in case $\kappa=0$ was proved by Avila and Jitomirskaya (Theorem 1.1 in \cite{AvilaUaH}). The result stated in Theorem \ref{app3} $(ii)$ is optimal. Namely, it is known that there exist square-root singularities at the boundaries of gaps, so the modulus of continuity cannot be improved in general \cite{Puig}.

\item[(ii)]  By closely following the proofs in this work, we can obtain results similar to Theorem \ref{app3} for the operators studied in \cite{Liu,Zhao,Zhao2}. Moreover, by combining these arguments with those of Li  {\it et al.}\ in \cite{Li}, it seems possible to obtain a result analogous to Theorem \ref{app3} for local dimensions, in the spirit of \cite{Li}. We do not explore this possibility here.

\item[(iii)] For an arbitrary \( \kappa \) in the statement of Theorem \ref{app3}, one can prove that the same result holds restricted to the essential spectrum of $H^\kappa$.

\item[(iv)] The persistence of the purely absolutely continuous spectrum in the statement of Theorem \ref{app3} is quite interesting. It is common knowledge that purely absolutely continuous spectrum occurs only rarely in one-dimensional Schr\"odinger operators \cite{Remling0}. We note that the exclusion of singular continuous spectrum for continuous quasi-periodic operators is a delicate issue, closely related to works of Eliasson \cite{Eliasson}, Avila \cite{AvilaG}, and Damanik et al. \cite{Damanik4}. Concerning the purely absolutely continuous spectrum in Theorem \ref{app3}, we rely directly on a result of Damanik et al. \cite{Damanik4}. The original contribution of the present work lies in the establishment of H\"older regularity of the spectral measures.

\end{enumerate}


\subsection{Organization of text and notations}\label{notations}
\

In Section \ref{sec2}, we present our first asymptotic formula (Theorem \ref{transferthm}), related to transfer matrices of bounded Schr\"odinger operators under exponentially decaying perturbations. In this section, we also establish a sharp Strichartz-type inequality (Theorem \ref{app1}), and then we prove Theorem \ref{app2}. Section \ref{sec3} is devoted to the proofs of Theorems \ref{transferthm} and \ref{app1}. In Section \ref{sec4}, we present our second asymptotic formula (Theorem \ref{transferthm2}), which deals with a family of positive self-adjoint operators (see \eqref{operators} below) associated with the transfer matrices of bounded Schr\"odinger operators under exponentially decaying perturbations. By combining this  analysis with a theory due to Avila and Jitomirskaya \cite{AvilaUaH}, we then prove Theorem \ref{app3}. Subsection~\ref{sec5} is devoted to the proof of Theorem~\ref{transferthm2}. In the appendix, we recall how to obtain the Radon-Nikodym derivative of the spectral measure of the free Laplacian with suited boundary conditions, and how to compute a standard formula related to its transfer matrix. We also recall a proof of a telescoping formula for matrices.

Some words about the notation:  $\{\delta_n := (\delta_{nj})_{j \in \mathbb{Z}} \}$  stands for the canonical basis of $\ell^2(\mathbb{Z})$. Let \( H \) be a self-adjoint operator acting
on \( \ell^2(\mathbb{Z}) \). The resolvent set of \( H \), denoted by \( \rho(H) \), is the set of values \( \lambda \in \mathbb{C} \) for which \( (H - \lambda I)^{-1} \) exists as a bounded operator. The spectrum of \( H \) is  $\sigma(H)= \mathbb{C} \setminus  \rho(H)$.

For each \( \psi \in \ell^2(\mathbb{Z}) \), let \( \mu = \mu_\psi \) denote the spectral measure associated with the pair \( (H,\psi) \). Let $\mu = \mu_{\mathrm{pp}} + \mu_{\mathrm{sc}} + \mu_{\mathrm{ac}}$ be the standard (unique) decomposition~\cite{Oliveira} of $\mu$  into mutually singular (Borel) pure point, singular continuous and absolutely continuous parts,  with corresponding space decomposition:
\[
\ell^2(\mathbb{Z}) = \ell^2_{\mathrm{pp}}(H) \oplus \ell^2_{\mathrm{sc}}(H) \oplus \ell^2_{\mathrm{ac}}(H).
\]

Denote by  \( H_\bullet := H|_{\ell^2_\bullet(H)} \), for \( \bullet \in \{ \mathrm{pp}, \mathrm{sc}, \mathrm{ac} \} \) the restricted operators, and their associated spectral components as
\[
\sigma_{\mathrm{pp}}(H) := \sigma(H_{\mathrm{pp}}), \quad
\sigma_{\mathrm{sc}}(H) := \sigma(H_{\mathrm{sc}}), \quad
\sigma_{\mathrm{ac}}(H) := \sigma(H_{\mathrm{ac}}),
\]
so that the spectrum of \( H \) can be written as
\[
\sigma(H) = \sigma_{\mathrm{pp}}(H) \cup \sigma_{\mathrm{sc}}(H) \cup \sigma_{\mathrm{ac}}(H).
\]

A finite Borel measure \( \mu \) on \( \mathbb{R} \) is supported on a Borel set \( \Lambda \subset \mathbb{R} \) if \( \mu(\mathbb{R} \setminus \Lambda) = 0 \). Here, the (canonical) spectral measure of a Schr\"odinger operator \( H = \triangle + V\) acting on \( \ell^2(\mathbb{Z}) \) is the unique Borel measure \( \mu \) for which the whole-line \( m \)-function coincides with its Borel transform, that is, 
\[
M(z) = \int \frac{d\mu(E')}{E' - z} = \langle \delta_0, (H - z)^{-1} \delta_0 \rangle + \langle \delta_1, (H - z)^{-1} \delta_1 \rangle.
\]

A Borel set \( \Sigma_{ac} = \Sigma_{ac}(H) \subset \mathbb{R} \) is called a (Lebesgue) essential support of the absolutely continuous spectrum of $H$ if: 1. \( \mu_{\mathrm{ac}}(\mathbb{R} \setminus \Sigma_{\mathrm{ac}} ) = 0 \), and 2. for any other Borel set \( S \subset \mathbb{R} \) such that \( \mu_{\mathrm{ac}}(\mathbb{R} \setminus S) = 0 \), we have $|\Sigma_{\mathrm{ac}}  \setminus S| = 0,$  where \( |\cdot| \) denotes the Lebesgue measure on \( \mathbb{R} \).  We recall that  $\overline{\Sigma_{\mathrm{ac}}}^{\mathrm{ess}} = \sigma_{\mathrm{ac}}(H)$ \cite{LivroDamanik}. We note that the same results hold for Schr\"odinger operators $H_\beta$ on the half-line as in \eqref{halfoperator}, $-\frac{\pi}{2} < \beta <\frac{\pi}{2}$, where  
\[
m^+(z) = m_0^+(z) =  \int \frac{d\mu^+(E')}{E' - z} =  \langle \delta_1, (H - z)^{-1} \delta_1 \rangle \quad {\rm and } \quad m^+_\beta = R_{-\beta/2\pi} \cdot m^+.
\]
Those are Borel transforms of the half-line spectral measures \(\mu^\beta = \mu_{\delta_1}^\beta\) of the operator \(H_\beta\) \cite{Jitomirskaya2}. Here we make use of the action of \(\mathrm{SL}(2, \mathbb{C})\) on \(\mathbb{C}\),
\[
\begin{pmatrix} a & b \\
c & d \end{pmatrix} \cdot z = \frac{az + b}{cz + d}.
\]

Unless stated otherwise, we use $||A|| = \sqrt{\operatorname{tr}(A A^*)}$ to denote the Frobenius norm of a \(2 \times 2\) matrix \(A\), 
with \(A^*\) standing for the conjugate transpose of \(A\).

In this work, $\hat{f}$ denotes the Fourier transform of a function $f \in {\mathrm L}^1(\mathbb{R})$, and for a finite Borel measure $\mu$ on $\mathbb{R}$ and $s \in \mathbb{R}$, 
\[\hat{\mu}(s) = \int_{\mathbb{R}} e^{-2\pi isx} d\mu(x). \]

Finally, we recall the notion of transfer matrix. Let  $H$ be defined as in~\eqref{operator}. Consider, for every $z \in \mathbb{C}$, the eigenvalue equation  
\begin{equation*}
(Hu)(n) = u(n+1) + u(n-1) + V(n) u(n) = z u(n), \qquad n \in \mathbb{Z},
\end{equation*}
which can be rewritten as a first-order system by introducing 
\begin{equation*}
A_n = A(n,z) :=
\begin{bmatrix}
z - V(n) & -1 \\
1 & 0
\end{bmatrix}, \qquad  n \in \mathbb{Z},
\end{equation*}
and setting
\begin{equation*}
\Phi(n) =
\begin{bmatrix}
u(n+1) \\
u(n)
\end{bmatrix};
\end{equation*}
then, the eigenvalue equation is equivalent to 
\begin{equation*}
\Phi(n) = T(z, n, 0) \Phi(0), \qquad n \in\ \mathbb{Z},
\end{equation*}
where \( T(z, n, 0)\) denotes the transfer matrix 
\[
T(z, n, 0) =
\left\{
\begin{array}{ll}
A_n A_{n-1} \cdots A_2 A_1, & n \geq 1, \\
A_0, &  n = 0, \\
A^{-1}_{n+1} A^{-1}_{n} \cdots A^{-1}_1 A^{-1}_0, &  n \leq -1,
\end{array}
\right.
\]
with \( A_0=I \). For the case of the operator on the half-line, this construction is carried out by restricting the definition above to the sites \( n \geq 0 \).


\section{Proof of the Main Theorem}\label{sec2}

\subsection{Asymptotic formula I}
\ 

The following result, used in the proofs of Theorems \ref{app2} and \ref{app3}, shows that,  in some sense, an exponentially decaying small perturbation does not affect the asymptotic behavior of the transfer matrix of a Schr\"odinger operator with bounded potential.
\begin{theorem}\label{transferthm} 
Let $H = \triangle + V$ be a Schr\"odinger operator acting on $\ell^2(J)$, where $J \in \{ \mathbb{Z}, \mathbb{Z}^+ \}$, with $V \in \ell^{\infty}(J, \mathbb{R})$. Let $a>3, \tau > 3$, $\gamma > 0$, and let $b \in \ell^{\infty}(J, \mathbb{R})$ be such that
\[
|b(n)| \leq \gamma (a + 2 \| V \|_\infty)^{-\tau |n|}, \qquad n \in J.
\]
Let $|\kappa| \leq \displaystyle\frac{1}{\| b \|_\infty}$ and let $H^\kappa = H + \kappa b$ be the  perturbed Schr\"odinger operator acting on $\ell^2(J)$. Denote by $T(x,n,0)$ and $T^\kappa(x,n,0)$  the transfer matrices associated with~$H$ and $H^\kappa$, respectively. Then, for each $n \in J$ and each $x \in \sigma(H^\kappa)$,
\begin{eqnarray}\label{AFI1}
T(x, n, 0) - T^\kappa(x, n, 0) = T(x, n, 0) Q(x) + R_n(x),
\end{eqnarray}
\begin{eqnarray}\label{AFI2}
T^\kappa(x, n, 0) - T(x, n, 0) = T^\kappa(x, n, 0) P(x) + K_n(x),
\end{eqnarray}
with $\| R_n \|, \| K_n \| \leq \eta |\kappa| (a + 2 \| V \|_\infty)^{(3 - \tau) |n|}$ and $\| Q \|, \| P \| \leq \eta |\kappa|$ for some $\eta > 0$, which does not depend on $x \in \sigma(H^\kappa)$. Moreover, \( Q\) e \(P \) are  continuous functions on  \(\sigma(H^\kappa) \). 
\end{theorem}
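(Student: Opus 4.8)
The natural starting point is the telescoping (Duhamel-type) identity for products of matrices: if $T(x,n,0) = A_n \cdots A_1$ and $T^\kappa(x,n,0) = A_n^\kappa \cdots A_1^\kappa$, where $A_j = A_j(x)$ and $A_j^\kappa = A_j(x) - \kappa b(j) E_{11}$ (with $E_{11}$ the matrix whose only nonzero entry is a $1$ in the top-left corner), then
\[
T(x,n,0) - T^\kappa(x,n,0) = \sum_{j=1}^{n} \big(A_n \cdots A_{j+1}\big)\,\big(A_j - A_j^\kappa\big)\,\big(A_{j-1}^\kappa \cdots A_1^\kappa\big) = \kappa \sum_{j=1}^{n} T(x,n,j)\, b(j) E_{11}\, T^\kappa(x,j-1,0).
\]
The plan is to manipulate each summand so as to replace $T(x,n,j)$ by $T(x,n,0)\,T(x,0,j) = T(x,n,0)\,T(x,j,0)^{-1}$, giving a ``leading'' term $T(x,n,0)\cdot\big(\kappa\sum_j T(x,j,0)^{-1} b(j) E_{11} T^\kappa(x,j-1,0)\big)$; one sets $Q(x)$ equal to the full sum $j=1,\dots,\infty$ of this bracketed series, and then $R_n(x) = -\kappa\sum_{j>n} T(x,n,0) T(x,j,0)^{-1} b(j) E_{11} T^\kappa(x,j-1,0)$ is the tail correction. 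The identity \eqref{AFI2} is obtained symmetrically, swapping the roles of $H$ and $H^\kappa$ (note $H = H^\kappa + (-\kappa)b$, so no new work is needed — just apply the already-proved \eqref{AFI1} with the perturbation reversed, or run the telescoping the other way).

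The crux is the norm bound. Since $x \in \sigma(H^\kappa) \subseteq [-\|V\|_\infty - 2 - |\kappa|\|b\|_\infty,\ \|V\|_\infty + 2 + |\kappa|\|b\|_\infty] \subseteq [-\|V\|_\infty-3,\ \|V\|_\infty+3]$ (using $|\kappa|\|b\|_\infty \le 1$), each single transfer-matrix factor $A_j(x)$ or $A_j^\kappa(x)$ has Frobenius norm bounded by a constant times $a + 2\|V\|_\infty$ — this is exactly the point of building $a + 2\|V\|_\infty$ into the hypothesis on $|b(n)|$. Hence $\|T(x,j,0)^{-1}\| = \|T(x,0,j)\| \le (C(a+2\|V\|_\infty))^{j}$ and $\|T^\kappa(x,j-1,0)\| \le (C(a+2\|V\|_\infty))^{j-1}$, so a generic summand is bounded by $|\kappa|\,\gamma (a+2\|V\|_\infty)^{-\tau j}\cdot (C(a+2\|V\|_\infty))^{2j}$, which — after absorbing the constant $C^{2j}$, e.g. by noting $C \le a+2\|V\|_\infty$ once $a$ is large (this is where $a>3$ rather than $a>2$ helps, and why the theorem is stated with a margin) — decays like $(a+2\|V\|_\infty)^{(2-\tau)j}$, summable since $\tau > 3 > 2$. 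Summing over all $j\ge 1$ gives $\|Q(x)\| \le \eta|\kappa|$ with $\eta$ depending only on $\gamma, \tau$ and the absorbed geometric constants, uniformly in $x$; summing the tail $j > n$ and pulling out the extra factor $\|T(x,n,0)\| \le (C(a+2\|V\|_\infty))^n$ gives $\|R_n(x)\| \le \eta|\kappa|(a+2\|V\|_\infty)^{n}(a+2\|V\|_\infty)^{(2-\tau)(n+1)} \le \eta|\kappa|(a+2\|V\|_\infty)^{(3-\tau)n}$, as claimed (reorganizing constants and using $\tau>3$ to keep the exponent of the leading $n$-power under control). The bound $|\kappa|\le 1/\|b\|_\infty$ is what makes $A_j^\kappa$ obey the same crude bound as $A_j$ with a universal constant.

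Finally, continuity of $Q$ and $P$ on $\sigma(H^\kappa)$: each partial sum $\sum_{j=1}^N T(x,j,0)^{-1}b(j)E_{11}T^\kappa(x,j-1,0)$ is a finite product/sum of polynomial (hence continuous) matrix-valued functions of $x$ (transfer matrices and their inverses are polynomial in $x$ on the relevant compact set, with $\det = 1$ bounded away from $0$), and the series converges uniformly on the compact set $\sigma(H^\kappa)$ by the geometric bound just established; a uniform limit of continuous functions is continuous. The main obstacle, and the only genuinely delicate point, is the bookkeeping in the norm estimate — getting the constant $\eta$ to be genuinely independent of $x$ and of $n$ while keeping the final exponent exactly $(3-\tau)|n|$, which forces one to track carefully how the per-step constant $C$ is absorbed into the base $a + 2\|V\|_\infty$ and to use the margin afforded by $a>3$, $\tau>3$. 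The extension from $J = \mathbb{Z}^+$ to $J = \mathbb{Z}$ requires handling negative sites, where transfer matrices are products of inverses $A_j^{-1}$; these obey the same norm bounds (inverse of an $\mathrm{SL}(2)$ matrix has the same Frobenius norm up to the sign pattern), so the argument carries over verbatim with $|n|$ in place of $n$.
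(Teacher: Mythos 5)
Your proposal is correct and follows essentially the same route as the paper: the same telescoping identity, the same insertion of $T(x,n,0)\,T(x,j,0)^{-1}$ to define $Q$ as a convergent infinite series with $R_n$ as the ($T(x,n,0)$-multiplied) tail, the same a priori bound $\|T^{\kappa}(x,n,0)\|\lesssim (a+2\|V\|_\infty)^{|n|}$ (this is exactly the paper's Lemma~\ref{02teclemma}, where $a>3$, $|\kappa|\,\|b\|_\infty\le 1$ and $b(n)\to 0$ are used to get a single multiplicative prefactor rather than a per-step constant), unimodularity to control the inverses, and uniform convergence of polynomial partial sums for continuity. The swap of roles for \eqref{AFI2} and the treatment of negative sites also match the paper.
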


\begin{corollary}\label{maincoror0000} 
Let $H$ and $H^\kappa$ be as in the statement of Theorem \ref{transferthm}. Then, there exist universal constants $\gamma,\eta > 0$ such that for each $n \in J$ and each $x \in \sigma(H^\kappa)$,
\[
r_n + \gamma^{-1} \| T(x, n, 0) \| \leq \| T^\kappa(x, n, 0) \| \leq \gamma \| T(x, n, 0) \| + r_n,
\]
with $r_n \leq \eta |\kappa| (a + 2 \| V \|_\infty)^{(3 - \tau) |n|}$.
\end{corollary}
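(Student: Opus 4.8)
\emph{Proof plan for Corollary \ref{maincoror0000}.} The corollary is a direct consequence of the two identities \eqref{AFI1} and \eqref{AFI2} already supplied by Theorem \ref{transferthm}; the only thing left to do is to pass from these operator identities to scalar norm inequalities, which I would do by taking Frobenius norms and invoking submultiplicativity, $\|AB\|\le\|A\|\,\|B\|$. Throughout I would fix $r_n := \eta_0\,|\kappa|\,(a + 2\|V\|_\infty)^{(3-\tau)|n|}$, where $\eta_0>0$ is the constant furnished by Theorem \ref{transferthm}, so that $\|R_n(x)\|,\|K_n(x)\|\le r_n$ and $\|Q(x)\|,\|P(x)\|\le \eta_0|\kappa|$ for every $x\in\sigma(H^\kappa)$, with all these bounds uniform in $x$.

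First I would rewrite \eqref{AFI1} as $T^\kappa(x,n,0) = T(x,n,0)\bigl(I - Q(x)\bigr) - R_n(x)$. Taking Frobenius norms and using the triangle inequality, submultiplicativity, the bounds above, and $|\kappa|\le 1/\|b\|_\infty$, I get
\[
\|T^\kappa(x,n,0)\| \le \bigl(\|I\| + \eta_0|\kappa|\bigr)\,\|T(x,n,0)\| + r_n \le \gamma_0\,\|T(x,n,0)\| + r_n ,
\]
where $\gamma_0 := \sqrt{2} + \eta_0/\|b\|_\infty$ depends neither on $n$ nor on $x$. This is the claimed upper bound. For the lower bound I would argue symmetrically from \eqref{AFI2}, rewriting it as $T(x,n,0) = T^\kappa(x,n,0)\bigl(I - P(x)\bigr) - K_n(x)$; the identical estimate gives $\|T(x,n,0)\| \le \gamma_0\,\|T^\kappa(x,n,0)\| + r_n$, hence, after rearranging and enlarging $\gamma_0$ to be at least $1$,
\[
\|T^\kappa(x,n,0)\| \ge \gamma_0^{-1}\bigl(\|T(x,n,0)\| - r_n\bigr) \ge \gamma_0^{-1}\,\|T(x,n,0)\| - r_n .
\]
Setting $\gamma := \max\{\gamma_0,1\}$ and absorbing the factor $\gamma_0^{-1}$ into a slightly larger $\eta$ in the definition of $r_n$ then yields the two-sided estimate in the statement.

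I do not expect any genuine obstacle here: the entire analytic weight sits in Theorem \ref{transferthm}, and the corollary is a two-line consequence. The only point requiring a little care is checking that the multiplicative constant $\gamma$ is truly independent of $n$ and $x$ — which it is, precisely because $\|Q(x)\|$ and $\|P(x)\|$ are bounded by $\eta_0|\kappa|$ uniformly in $x$ and $|\kappa|$ is bounded by $1/\|b\|_\infty$. If, for applications later in the paper, one also wanted the matrices $I - Q(x)$ and $I - P(x)$ to be invertible with uniformly controlled inverses, it would be enough to shrink the admissible range of $|\kappa|$ so that $\eta_0|\kappa|<1$; this is not needed for the inequalities as stated.
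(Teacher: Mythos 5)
Your argument is correct and is exactly the intended two-line deduction from Theorem \ref{transferthm} (the paper itself states this corollary without proof): rewrite \eqref{AFI1} and \eqref{AFI2} as $T^\kappa = T(I-Q)-R_n$ and $T = T^\kappa(I-P)-K_n$, take norms, and use the uniform bounds on $Q$, $P$, $R_n$, $K_n$ together with $|\kappa|\le 1/\|b\|_\infty$. The only discrepancy is cosmetic: you obtain $\|T^\kappa(x,n,0)\|\ge \gamma^{-1}\|T(x,n,0)\|-r_n$ rather than the literal ``$r_n+\gamma^{-1}\|T(x,n,0)\|\le\|T^\kappa(x,n,0)\|$'' of the statement, but since the corollary constrains $r_n$ only from above, your version is the intended reading and is what is actually used later (e.g.\ in the proof of Theorem \ref{maintheorem0202}).
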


\begin{remark}{\rm To prove Theorem \ref{app2}, we combine Theorem \ref{transferthm} with an approximation argument for spectral measures developed by Carmona, Krutikov, and Remling \cite{Carmona1,Carmona2,Krutikov}. One point that requires a slightly more refined analysis when applying Carmona’s Theorem (see Theorem \ref{CarmonaTHM} below) is the fact that the matrix \( Q \) in the statement of Theorem \ref{transferthm} appears on the right-hand of the transfer matrix. However, upon careful reflection, one sees that this cannot be otherwise---and indeed, this structure is essential: it prevents us from proving that the \( \frac{1}{2} \)-H\"older continuity behavior stated in part 1. $(ii)$ of Theorem \ref{app2} is optimal for every potential as described in the statement of the theorem. In fact, it is straightforward to construct examples involving finite-rank perturbations for which the spectral measures are Lipschitz continuous up to the edge of the absolutely continuous spectrum in the setting of part 1. of Theorem \ref{app2}.}
\end{remark}

Next, we combine Corollary~\ref{maincoror0000} with the description by Last and Simon \cite{LastSimon} of the (Lebesgue) essential support of the absolutely continuous spectrum of a Schr\"odinger operator to conclude that, under exponentially decaying small perturbations, its absolutely continuous spectrum is preserved. 

\begin{theorem}[Theorem 2.9.2 in \cite{LivroDamanik}]\label{thmlastsimon1} Let $H:{\dom} H \subset\ell^2(\mathbb{Z})\rightarrow \ell^2(\mathbb{Z})$ be a Schr\"odinger operator. Set
\begin{eqnarray*}
S^+ &:=& \left\{ x \in \mathbb{R} : \displaystyle\liminf_{L \to \infty} \frac{1}{L} \sum_{n=1}^L \|T(x, n, 0)\|^2 < \infty \right\},\\
S^- &:=& \left\{ x \in \mathbb{R} : \displaystyle\liminf_{L \to \infty} \frac{1}{L} \sum_{n=-L}^{-1} \|T(x, n, 0)\|^2 < \infty \right\}, 
\end{eqnarray*}
and $S := S^+ \cup S^-$. Then, up to sets of Lebesgue measure zero, one has $S = \Sigma_{\mathrm{ac}}(H)$, where $\Sigma_{\mathrm{ac}}(H)$ denotes a (Lebesgue) essential support of the absolutely continuous spectrum of $H$. 
\end{theorem}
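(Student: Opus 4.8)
The plan is to prove the two inclusions $S\subseteq\Sigma_{\mathrm{ac}}(H)$ and $\Sigma_{\mathrm{ac}}(H)\subseteq S$, each up to a Lebesgue-null set, after first reducing the whole-line problem to two half-line problems. For the reduction, cut the bond joining the sites $0$ and $1$: let $m_+$ be the Weyl--Titchmarsh $m$-function of the right half-line operator $H_+$ with Dirichlet condition $u(0)=0$, and $m_-$ that of the left half-line operator with $u(1)=0$. Then the whole-line canonical measure $\mu$ is mutually absolutely continuous with $\mu^{+}+\mu^{-}$, so $\Sigma_{\mathrm{ac}}(H)=\Sigma_{\mathrm{ac}}^{+}\cup\Sigma_{\mathrm{ac}}^{-}$ up to null sets, while $S=S^{+}\cup S^{-}$ tautologically, with $S^{\pm}$ the obvious half-line analogues (the matrices $T(x,n,0)$ with $n\le-1$ being, up to inversion, transfer matrices of the left half-line operator). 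Thus it suffices to prove $S^{+}=\Sigma_{\mathrm{ac}}^{+}$ up to null sets, the left half-line being identical. Two standing facts: (a) $\Sigma_{\mathrm{ac}}^{+}=\{x:0<\operatorname{Im}m_+(x+i0)<\infty\}$ up to null sets, since $m_+$ is Herglotz, its boundary values exist and are finite a.e., and $\mu^{+}_{\mathrm{ac}}$ has density $\tfrac1\pi\operatorname{Im}m_+(x+i0)$; and (b) writing $u^{D},u^{N}$ for the Dirichlet and Neumann solutions of $H_+u=xu$, the columns of $T(x,n,0)$ are their Cauchy data, so $\sum_{n=1}^{L}\|T(x,n,0)\|^{2}\asymp\|u^{D}\|_{L}^{2}+\|u^{N}\|_{L}^{2}$, where $\|\cdot\|_{L}$ is the truncated $\ell^2$-norm.

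For $\Sigma_{\mathrm{ac}}^{+}\subseteq S^{+}$: fix $x$ with $0<\operatorname{Im}m_+(x+i0)<\infty$, put $z=x+i\epsilon$, and let $\psi_+(\cdot\,;z)$ be the $\ell^2$ Weyl solution normalized by $\psi_+(1;z)=1$. From $\operatorname{Im}m_+(z)=\epsilon\,\|(H_+-z)^{-1}\delta_1\|^{2}$ and the Green's-function identity $\langle\delta_n,(H_+-z)^{-1}\delta_1\rangle=m_+(z)\psi_+(n;z)$ for $n\ge1$, one gets $\sum_{n\ge1}|m_+(z)\psi_+(n;z)|^{2}=\operatorname{Im}m_+(z)/\epsilon$. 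The Cauchy data of $m_+(z)\psi_+(\cdot\,;z)$ is the vector $v(z):=(m_+(z),1)^{\mathrm T}$, so $\sum_{n=1}^{N}\|T(x,n,0)v(z)\|^{2}\lesssim\operatorname{Im}m_+(z)/\epsilon$; the same computation at $\bar z$ gives the vector $\overline{v(z)}=(\overline{m_+(z)},1)^{\mathrm T}$ with the identical bound. Now $v(z)$ and $\overline{v(z)}$ form a basis of $\mathbb{C}^{2}$: the determinant of the matrix with these columns is $2i\operatorname{Im}m_+(z)$, and each has norm $(1+|m_+(z)|^{2})^{1/2}$. Expanding an arbitrary unit vector in this basis (Cramer's rule) and applying $T(x,n,0)$ yields
\[
\|T(x,n,0)\|^{2}\lesssim\frac{1+|m_+(z)|^{2}}{(\operatorname{Im}m_+(z))^{2}}\Bigl(\|T(x,n,0)v(z)\|^{2}+\|T(x,n,0)\overline{v(z)}\|^{2}\Bigr).
\]
Summing over $n=1,\dots,N$ and taking $\epsilon=1/N$ gives $\tfrac1N\sum_{n=1}^{N}\|T(x,n,0)\|^{2}\lesssim(1+|m_+(x+i/N)|^{2})/\operatorname{Im}m_+(x+i/N)$, whose limit as $N\to\infty$ is $(1+|m_+(x+i0)|^{2})/\operatorname{Im}m_+(x+i0)<\infty$ for a.e.\ such $x$. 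Hence $\liminf_N\tfrac1N\sum_{n=1}^{N}\|T(x,n,0)\|^{2}<\infty$, i.e.\ $x\in S^{+}$.

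For $S^{+}\subseteq\Sigma_{\mathrm{ac}}^{+}$ up to null sets: if $x\in S^{+}$, there are $C>0$ and $L_k\uparrow\infty$ with $\|u^{D}\|_{L_k}^{2},\|u^{N}\|_{L_k}^{2}\le CL_k$. Together with the elementary Wronskian bound $\|u\|_{L}\,\|v\|_{L}\gtrsim L$, valid for any two linearly independent solutions of $H_+u=xu$ (constancy of the Wronskian plus Cauchy--Schwarz), this rules out any solution subordinate at $+\infty$: given a subordinate $w$, choose $v\in\{u^{D},u^{N}\}$ not proportional to $w$; then $\|w\|_{L_k}\|v\|_{L_k}\gtrsim L_k$ with $\|v\|_{L_k}\lesssim\sqrt{L_k}$ forces $\|w\|_{L_k}\gtrsim\sqrt{L_k}$, while subordinacy forces $\|w\|_{L_k}=o(\|v\|_{L_k})=o(\sqrt{L_k})$, a contradiction. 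By Gilbert--Pearson subordinacy theory (``$\operatorname{Im}m_+(x+i0)=0$ iff $u^{D}$ is subordinate'', ``$\operatorname{Im}m_+(x+i0)=\infty$ iff $u^{N}$ is subordinate''), the absence of a subordinate solution gives $0<\operatorname{Im}m_+(x+i0)<\infty$ for a.e.\ such $x$, i.e.\ $x\in\Sigma_{\mathrm{ac}}^{+}$.

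Combining the two inclusions with their left half-line analogues yields $S=S^{+}\cup S^{-}=\Sigma_{\mathrm{ac}}^{+}\cup\Sigma_{\mathrm{ac}}^{-}=\Sigma_{\mathrm{ac}}(H)$ up to Lebesgue-null sets, as claimed. I expect the two steps requiring the most care to be the whole-line/half-line dictionary---identifying $T(x,n,0)$ for $n\le-1$ with the left half-line operator and verifying $\mu\sim\mu^{+}+\mu^{-}$---and the appeal to subordinacy theory, whose genuinely delicate content is that the a.e.\ existence of the boundary value $m_+(x+i0)$ does not by itself exhibit a subordinate solution, so the equivalence between positivity and finiteness of $\operatorname{Im}m_+(x+i0)$ and the absence of subordinate solutions must be imported in full (cf.\ the Jitomirskaya--Last inequality \cite{Jitomirskaya}) rather than argued by hand.
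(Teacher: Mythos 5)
The paper does not actually prove this statement: it is imported verbatim from the literature (Last--Simon \cite{LastSimon}; Theorem 2.9.2 in \cite{LivroDamanik}), so there is no internal proof to compare yours against, and I am judging the proposal on its own. Your architecture is the standard one: cut the bond $\{0,1\}$, reduce to the two half-lines, and prove $S^{+}=\Sigma_{\mathrm{ac}}^{+}$ up to null sets. The inclusion $S^{+}\subseteq\Sigma_{\mathrm{ac}}^{+}$ is essentially sound: the Wronskian estimate $\|u\|_{L}\|v\|_{L}\gtrsim L$ plus the subsequence bound $\|u^{D}\|_{L_k}^{2},\|u^{N}\|_{L_k}^{2}\lesssim L_k$ does exclude subordinate solutions, and the essential-support form of Gilbert--Pearson theory (which you correctly flag as an import; the precise dichotomies you quote are slightly garbled, but the conclusion you draw from them is the right one) then gives $0<\operatorname{Im}m_+(x+i0)<\infty$ a.e.\ on $S^{+}$.

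The genuine gap is in the inclusion $\Sigma_{\mathrm{ac}}^{+}\subseteq S^{+}$, at the step ``the Cauchy data of $m_+(z)\psi_+(\cdot\,;z)$ is $v(z)$, so $\sum_{n=1}^{N}\|T(x,n,0)v(z)\|^{2}\lesssim\operatorname{Im}m_+(z)/\epsilon$.'' The Weyl solution $\psi_+(\cdot\,;z)$ solves the difference equation at the \emph{complex} energy $z=x+i\epsilon$, so its Cauchy data at site $n$ is $T(z,n,0)v(z)$, not $T(x,n,0)v(z)$. What your computation actually proves is a bound on $\frac1N\sum_{n=1}^{N}\|T(x+i/N,n,0)\|^{2}$, i.e.\ on the transfer matrices at the complex energy, whereas $S^{+}$ is defined through the transfer matrices at the real energy $x$. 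Passing from $T(x+i\epsilon,n,0)$ to $T(x,n,0)$ uniformly over the full range $n\le N\sim 1/\epsilon$ is not a limiting triviality: pointwise convergence for fixed $n$ is useless because the number of summands grows like $1/\epsilon$, and the discrepancy is controlled only by Gronwall-type factors of the form $\exp\bigl(\epsilon\sum_{j\le n}\|T(x,j,0)\|\,\|T(x,n,j)\|\bigr)$ --- that is, by the very quantity you are trying to bound. Closing this circle by an a priori estimate and bootstrap is exactly the technical content of Last--Simon's perturbation lemmas (their Theorem~2.1 and its corollaries), and it is the part of the theorem that cannot be obtained from the resolvent identity and linear algebra alone. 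As written, your chain of inequalities silently identifies the two families of transfer matrices and therefore does not close; you must either import the Last--Simon comparison estimate explicitly or replace this direction by an argument that works directly at real energy (e.g.\ via the Jitomirskaya--Last inequality \cite{Jitomirskaya}, which relates $\|u^{D}\|_{L}$, $\|u^{N}\|_{L}$ to $m_+(x+i\epsilon(L))$ at a length-dependent $\epsilon(L)$ and is designed precisely to bypass this issue).
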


\begin{theorem}[Theorem 1.1 in \cite{LastSimon}]\label{thmlastsimon1_halfline}  
Let $H: {\dom}\, H \subset \ell^2(\mathbb{Z}^+) \to \ell^2(\mathbb{Z}^+)$ be a Schr\"odinger operator with Dirichlet boundary condition $u(0) = 0$ and let $S^+$ as in the statement of the Theorem \ref{thmlastsimon1}. Then, up to sets of Lebesgue measure zero, one has  $S^+ = \Sigma_{\mathrm{ac}}(H)$.  
\end{theorem}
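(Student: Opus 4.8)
The plan is to prove that $S^{+}$ is a (Lebesgue) essential support of $\mu_{\mathrm{ac}}$ by establishing the two required properties: $\mu_{\mathrm{ac}}(\mathbb{R}\setminus S^{+})=0$, and $|S^{+}\setminus S|=0$ for every Borel set $S$ carrying $\mu_{\mathrm{ac}}$. The whole machinery rests on one quantitative input: a two-sided comparison, at real energy $E$, between the Cesàro sum $\sum_{n=1}^{L}\|T(E,n,0)\|^{2}$ and the behaviour of the half-line Weyl–Titchmarsh function $m^{+}$ at the conjugate scale $\varepsilon\sim 1/L$ --- this is the Jitomirskaya–Last inequality (\cite{Jitomirskaya}), and I would assemble it from three ingredients. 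First, $\|T(E,n,0)\|^{2}$ is comparable, uniformly in $n$, to $|u_{D}(n)|^{2}+|u_{D}(n+1)|^{2}+|u_{N}(n)|^{2}+|u_{N}(n+1)|^{2}$, where $u_{D},u_{N}$ are the (real) Dirichlet and Neumann solutions of $Hu=Eu$. Second, for $\operatorname{Im}z>0$ the $\ell^{2}$ Weyl solution $\psi_{z}=u_{N}+m^{+}(z)u_{D}$ satisfies the Green-function identity $\sum_{n\geq 1}|\psi_{z}(n)|^{2}=\operatorname{Im}m^{+}(z)/\operatorname{Im}z$. Third, a telescoping/discrete-Gronwall estimate --- precisely of the type recorded in the appendix --- shows that the solutions at $E$ and at $E+i\varepsilon$ remain comparable up to the largest length $L(E,\varepsilon)$ for which $\varepsilon\sum_{k=1}^{L}\|T(E,k,0)\|^{2}\lesssim 1$, and that this $L(E,\varepsilon)$ is of order $1/\varepsilon$ on the energies of interest. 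Combining these (in the interpolated norm $\|u\|_{L}$, with $L(\varepsilon)$ defined by $\|u_{D}\|_{L(\varepsilon)}\|u_{N}\|_{L(\varepsilon)}=(2\varepsilon)^{-1}$) yields $\sum_{n=1}^{L(E,\varepsilon)}\|T(E,n,0)\|^{2}\asymp \tfrac{1}{\varepsilon}\bigl(|m^{+}(E+i\varepsilon)|+|m^{+}(E+i\varepsilon)|^{-1}\bigr)$, with universal constants.

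Granting this, the first inclusion is immediate: by Herglotz boundary-value theory the non-tangential limit $m^{+}(E+i0)$ exists and is finite for Lebesgue-a.e.\ $E$, hence $\mu_{\mathrm{ac}}$-a.e.; and $\mu_{\mathrm{ac}}$ is concentrated on $\Sigma_{0}:=\{E:\operatorname{Im}m^{+}(E+i0)>0\}$, where $d\mu_{\mathrm{ac}}/dE=\tfrac1\pi\operatorname{Im}m^{+}(E+i0)$. Thus for $\mu_{\mathrm{ac}}$-a.e.\ $E$ one has $0<|m^{+}(E+i0)|<\infty$, so the Jitomirskaya–Last comparison forces $\liminf_{\varepsilon\downarrow 0}\varepsilon\sum_{n=1}^{\lfloor 1/\varepsilon\rfloor}\|T(E,n,0)\|^{2}<\infty$, i.e.\ $E\in S^{+}$ (evaluating along $\varepsilon=1/L$ matches the $\liminf$ over integers $L$ in the definition of $S^{+}$); hence $\mu_{\mathrm{ac}}(\mathbb{R}\setminus S^{+})=0$. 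For the reverse property, note that $\Sigma_{0}$ is itself an essential support of $\mu_{\mathrm{ac}}$, so it suffices to show $|S^{+}\setminus\Sigma_{0}|=0$, because then $|S^{+}\setminus S|\leq|S^{+}\setminus\Sigma_{0}|+|\Sigma_{0}\setminus S|=0$ for any $S$ carrying $\mu_{\mathrm{ac}}$. If $E\in S^{+}$, choose $L_{j}\to\infty$ with $L_{j}^{-1}\sum_{n=1}^{L_{j}}\|T(E,n,0)\|^{2}$ bounded; the comparison at $\varepsilon_{j}=1/L_{j}$ then keeps $|m^{+}(E+i\varepsilon_{j})|$ bounded away from $0$ and $\infty$ along $j$, and since $m^{+}(E+i\varepsilon)$ converges to a finite limit for a.e.\ $E$, we get $\operatorname{Im}m^{+}(E+i0)>0$ for a.e.\ $E\in S^{+}$, i.e.\ $|S^{+}\setminus\Sigma_{0}|=0$. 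Together with uniqueness of essential supports up to null sets, this gives $S^{+}=\Sigma_{\mathrm{ac}}(H)$ modulo Lebesgue-null sets; equality at the level of $\sigma_{\mathrm{ac}}(H)$ then follows since $\sigma_{\mathrm{ac}}(H)=\overline{\Sigma_{\mathrm{ac}}}^{\mathrm{ess}}$.

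The main obstacle is the Jitomirskaya–Last comparison, and within it the correct choice and control of the scale $L(E,\varepsilon)$: one must verify that $L(E,\varepsilon)\asymp 1/\varepsilon$ exactly on the energies where $|m^{+}(E+i0)|$ is finite and nonzero, and bound the drift between solutions of $Hu=Eu$ and of $Hu=(E+i\varepsilon)u$ uniformly up to that scale --- this is where the appendix telescoping identity plus a Gronwall bound enter, and where the $\liminf$ (rather than $\limsup$) in the definition of $S^{+}$ is essential. A secondary, purely bookkeeping matter is that $S^{+}$ refers to the transfer-matrix norm (which controls both $u_{D}$ and $u_{N}$) whereas subordinacy is about a single distinguished solution; one reconciles this by recalling that a subordinate solution, if present, must agree with the Dirichlet solution off a Lebesgue-null set, so Cesàro-boundedness of $\|T(E,n,0)\|$ is equivalent, a.e., to absence of subordinacy. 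Everything else --- existence and a.e.\ positivity of $\operatorname{Im}m^{+}(E+i0)$, minimality of $\Sigma_{0}$, and the elementary passage between $\liminf$'s along $L\in\mathbb{N}$ and $\varepsilon=1/L$ --- is standard Herglotz/Fatou theory. One could alternatively try to deduce this statement from Theorem~\ref{thmlastsimon1}, but the whole-line result is in fact built by gluing two half-line Dirichlet problems, so the present half-line statement is the more fundamental one and is cleanest to prove directly.
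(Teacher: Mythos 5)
First, a point of order: the paper does not prove this statement at all --- it is quoted verbatim as Theorem~1.1 of Last--Simon \cite{LastSimon} (just as Theorem~\ref{thmlastsimon1} is quoted from \cite{LivroDamanik}), so there is no in-paper proof to compare yours against. Your sketch is therefore an attempt at the Last--Simon theorem itself, and while it lives in the right circle of ideas (subordinacy theory and the Jitomirskaya--Last inequality, the same toolkit the paper borrows in Lemma~\ref{lemmaAvila}), it has two concrete gaps. The first is the scale identification you yourself flag as ``the main obstacle'': the Jitomirskaya--Last comparison is exact only at the adapted length $L(E,\varepsilon)$ defined by $\|u_D\|_{L}\|u_N\|_{L}=(2\varepsilon)^{-1}$. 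The Wronskian identity gives $\|u_D\|_{L}\|u_N\|_{L}\gtrsim L$ unconditionally, but the reverse bound $\|u_D\|_{L}\|u_N\|_{L}\lesssim L$ --- which is what you need to read the inequality at $\varepsilon=1/L$ and conclude $E\in S^{+}$ from $0<|m^{+}(E+i0)|<\infty$ --- is equivalent (once JL tells you $\|u_D\|_L\asymp\|u_N\|_L$) to the Ces\`aro boundedness $\frac1L\sum_{n\le L}\|T(E,n,0)\|^{2}\lesssim 1$ that you are trying to prove. The Gronwall comparison between solutions at $E$ and at $E+i\varepsilon$ suffers the same circularity: its hypothesis is an a priori bound on $\sum_{n\le L}\|T(E,n,0)\|^2$. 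Last--Simon close this loop with a separate integral estimate (their Theorem~1.2, an averaged bound of $\frac1L\sum_{n\le L}\|T(E,n,0)\|^2$ against $d\mu_{\mathrm{ac}}$) that does not appear anywhere in your outline; without some substitute for it, the inclusion $\mu_{\mathrm{ac}}(\mathbb{R}\setminus S^{+})=0$ is not established.

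The second gap is in the reverse inclusion: from $E\in S^{+}$ you deduce that $|m^{+}(E+i\varepsilon_j)|$ stays bounded away from $0$ and $\infty$ along $\varepsilon_j\to0$, and then assert $\operatorname{Im}m^{+}(E+i0)>0$ for a.e.\ such $E$. That implication is false as stated: a Herglotz function can have a finite, nonzero, \emph{real} nontangential boundary value, in which case $E$ does not lie in the essential support of $\mu_{\mathrm{ac}}$. What is actually needed (and what Last--Simon prove) is a lower bound on $\operatorname{Im}m^{+}$ itself, of the form $\operatorname{Im}m^{+}(E+i\varepsilon)\gtrsim\bigl(\varepsilon\sum_{n\le 1/\varepsilon}\|T(E,n,0)\|^{2}\bigr)^{-1}$, obtained from the Green's-function identity $\|\psi_{z}\|^{2}=\operatorname{Im}m^{+}(z)/\operatorname{Im}z$ together with the pointwise bound $|\psi(n+1)|^2+|\psi(n)|^2\ge\|T(z,n,0)\|^{-2}$ and Cauchy--Schwarz; this one-sided inequality needs no adapted scale and directly yields $|S^{+}\setminus\Sigma_{0}|=0$ via Fatou. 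I would also not lean on your closing remark that a subordinate solution ``must agree with the Dirichlet solution off a Lebesgue-null set'' --- the correct Gilbert--Pearson statement is that the Dirichlet solution is subordinate for $\mu_{\mathrm s}$-a.e.\ $E$ and that no subordinate solution exists for $\mu_{\mathrm{ac}}$-a.e.\ $E$ --- but that step is not load-bearing. In short: cite the theorem, as the paper does, or import Last--Simon's Theorem~1.2 explicitly; the JL inequality alone does not carry both inclusions.
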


\begin{corollary}\label{maincor0000} Let $H_\beta$ be as in \eqref{halfoperator} and let $H_\beta^\kappa$ be defined as in the statement of Theorem \ref{transferthm}. If $H_\beta$ has  an absolutely continuous spectrum, then $H_\beta^\kappa$ has an absolutely continuous spectrum on $\sigma_{\mathrm{ess}}(H_\beta) = \sigma_{\mathrm{ess}}(H_\beta^\kappa) $.
\end{corollary}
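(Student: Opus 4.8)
The plan is to combine Corollary \ref{maincoror0000} with the Last--Simon description of the essential support of the absolutely continuous spectrum (Theorem \ref{thmlastsimon1_halfline}, or its two-sided counterpart Theorem \ref{thmlastsimon1} suitably restricted to $\mathbb{Z}^+$), first to show that the absolutely continuous part is preserved, and then to rule out any singular part on the common essential spectrum. Throughout, recall that $H_\beta$ and $H_\beta^\kappa$ differ by the compact perturbation $\kappa b$, so $\sigma_{\mathrm{ess}}(H_\beta)=\sigma_{\mathrm{ess}}(H_\beta^\kappa)$ by Weyl's theorem \cite{Oliveira}; denote this common set by $\Sigma_{\mathrm{ess}}$.

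First I would establish the inclusion of essential supports. By hypothesis $H_\beta$ has purely absolutely continuous spectrum, so in particular $\sigma_{\mathrm{ac}}(H_\beta)=\Sigma_{\mathrm{ess}}$ and, by Theorem \ref{thmlastsimon1_halfline}, the set $S^+(H_\beta):=\{x:\liminf_{L\to\infty}\tfrac1L\sum_{n=1}^L\|T(x,n,0)\|^2<\infty\}$ is, up to Lebesgue-null sets, an essential support of $\mu_{\mathrm{ac}}(H_\beta)$, hence has full Lebesgue measure in $\Sigma_{\mathrm{ess}}$. Now apply Corollary \ref{maincoror0000}: for each $n\in\mathbb{Z}^+$ and each $x\in\sigma(H_\beta^\kappa)$ one has $\|T^\kappa(x,n,0)\|\le \gamma\|T(x,n,0)\|+r_n$ with $r_n\le \eta|\kappa|(a+2\|V\|_\infty)^{(3-\tau)n}$, which is summable since $\tau>3$; squaring and averaging, $\tfrac1L\sum_{n=1}^L\|T^\kappa(x,n,0)\|^2 \le 2\gamma^2\tfrac1L\sum_{n=1}^L\|T(x,n,0)\|^2 + 2\tfrac1L\sum_{n=1}^L r_n^2$, and the last term tends to $0$. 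Hence $S^+(H_\beta)\cap\sigma(H_\beta^\kappa)\subset S^+(H_\beta^\kappa)$ (note $\sigma(H_\beta^\kappa)\supset\Sigma_{\mathrm{ess}}$, so this loses nothing on $\Sigma_{\mathrm{ess}}$), so $S^+(H_\beta^\kappa)$ also has full Lebesgue measure in $\Sigma_{\mathrm{ess}}$. By Theorem \ref{thmlastsimon1_halfline} applied to $H_\beta^\kappa$, the set $S^+(H_\beta^\kappa)$ is (up to null sets) an essential support of $\mu_{\mathrm{ac}}(H_\beta^\kappa)$; combining with the fact that $\mu_{\mathrm{ac}}(H_\beta^\kappa)$ is supported on $\sigma(H_\beta^\kappa)$ and that $\overline{\Sigma_{\mathrm{ac}}}^{\mathrm{ess}}=\sigma_{\mathrm{ac}}$, one gets $\sigma_{\mathrm{ac}}(H_\beta^\kappa)\supseteq\Sigma_{\mathrm{ess}}$; the reverse inclusion $\sigma_{\mathrm{ac}}(H_\beta^\kappa)\subseteq\sigma(H_\beta^\kappa)$ combined with $\sigma(H_\beta^\kappa)\setminus\Sigma_{\mathrm{ess}}$ being discrete (eigenvalues outside $[-2,2]$) yields $\sigma_{\mathrm{ac}}(H_\beta^\kappa)=\Sigma_{\mathrm{ess}}$. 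For the two-sided analogue one runs the same argument with $S=S^+\cup S^-$ and Theorem \ref{thmlastsimon1}, using the other inequality in Corollary \ref{maincoror0000}, $\gamma^{-1}\|T(x,n,0)\|\le\|T^\kappa(x,n,0)\|$, to get the symmetric inclusion and conclude equality of essential supports up to null sets.

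The remaining and genuinely delicate point is to upgrade ``$\sigma_{\mathrm{ac}}(H_\beta^\kappa)=\Sigma_{\mathrm{ess}}$'' to ``\emph{purely} absolutely continuous spectrum on $\Sigma_{\mathrm{ess}}$'', i.e.\ to exclude singular spectrum inside $\Sigma_{\mathrm{ess}}$. The plan here is to invoke the Gilbert--Pearson subordinacy theory together with the bound $\|T^\kappa(x,n,0)\|\le\gamma\|T(x,n,0)\|+r_n$: since $H_\beta$ has purely a.c.\ spectrum on $\Sigma_{\mathrm{ess}}$, for Lebesgue-a.e.\ $x\in\Sigma_{\mathrm{ess}}$ there is no subordinate solution of the eigenvalue equation for $H_\beta$, which (by the Last--Simon/Jitomirskaya--Last circle of ideas, cf.\ Theorem \ref{thmlastsimon1}) forces $\liminf_L \tfrac1L\sum_{n=1}^L\|T(x,n,0)\|^2<\infty$; the transfer-matrix comparison then transfers this $\liminf$ bound to $H_\beta^\kappa$, and in fact—because the error terms $R_n,K_n$ and the matrices $Q,P$ from Theorem \ref{transferthm} give a genuine asymptotic \emph{equivalence} of solutions, not merely a norm bound—no solution of the $H_\beta^\kappa$-equation is subordinate at such $x$ either. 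By subordinacy theory this means the singular part of $\mu_{\delta_1}^{\beta,\kappa}$ assigns zero mass to $\Sigma_{\mathrm{ess}}$. Since $\delta_1$ is cyclic for $H_\beta^\kappa$ on the half-line, this gives purely a.c.\ spectrum on $\Sigma_{\mathrm{ess}}$, completing the proof. The main obstacle is precisely this last step: one must make sure the comparison in Theorem \ref{transferthm} is strong enough to transport the \emph{absence of subordinate solutions} (not just a Last--Simon $\liminf$ condition, which controls a.c.\ support but not singular continuous spectrum), and handle the measure-zero exceptional sets carefully—this is exactly the subtlety flagged in the introduction via \cite{Damanik4}, where $\ell^1$ perturbations can in general create singular spectrum on the common essential spectrum, so the exponential decay and the explicit structure of Theorem \ref{transferthm} are doing essential work here.
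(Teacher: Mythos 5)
Your core argument reproduces the paper's: the paper proves this corollary precisely by combining the two-sided transfer-matrix comparison of Corollary \ref{maincoror0000} with the Last--Simon identification $S^{+}=\Sigma_{\mathrm{ac}}$ (Theorem \ref{thmlastsimon1_halfline}) to get $\sigma_{\mathrm{ess}}(H_+^\kappa)=\sigma_{\mathrm{ess}}(H_+)=\sigma_{\mathrm{ac}}(H_+)=\overline{\Sigma_{\mathrm{ac}}(H_+)}^{\mathrm{ess}}=\overline{\Sigma_{\mathrm{ac}}(H_+^\kappa)}^{\mathrm{ess}}=\sigma_{\mathrm{ac}}(H_+^\kappa)$, which is exactly your first step. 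Two differences are worth noting. First, the paper opens by reducing the general boundary condition $\beta$ to the Dirichlet case via rank-one perturbation theory (Theorem 1.9.10 in \cite{LivroDamanik}); this is what licenses the use of Theorem \ref{thmlastsimon1_halfline}, which is stated only for $u(0)=0$. You apply the Last--Simon theorem directly for general $\beta$; that is defensible (the set $S^{+}$ does not see the boundary condition, and the a.c.\ part is stable under rank-one perturbations), but you should either make that reduction explicit or justify the general-$\beta$ version. Second, and more substantially, your closing subordinacy step has no counterpart in the paper: the paper's proof ends with the displayed chain of identities and does not separately rule out singular spectrum embedded in $\sigma_{\mathrm{ess}}$. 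Your instinct that the Last--Simon $\liminf$ condition alone controls only the a.c.\ support is sound, and the ingredient you point to is the right one --- \eqref{AFI1} gives $T^\kappa(x,n,0)=T(x,n,0)(I-Q(x))-R_n(x)$ with $R_n\to 0$ exponentially and $I-Q(x)$ invertible (this invertibility is established in the paper only later, in the proof of Theorem \ref{transferthm2}), so subordinacy of solutions is transported in both directions, not merely norm bounds. As written, however, that part of your argument is a plan rather than a proof: you would still need to verify that purity of the a.c.\ spectrum of $H_\beta$ yields absence of subordinate solutions for Lebesgue-a.e.\ $x\in\Sigma_{\mathrm{ess}}$ and that the decaying error $R_n$ does not affect the comparison of the truncated norms. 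In short: your argument for $\sigma_{\mathrm{ac}}(H_\beta^\kappa)=\sigma_{\mathrm{ess}}(H_\beta^\kappa)$ matches the paper; your additional subordinacy analysis is extra care the paper does not take here, and it buys a genuinely stronger conclusion (exclusion of embedded singular spectrum) at the cost of needing to be carried out in detail.
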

    
\begin{proof} Since $H_\beta$ can be written as a rank-one perturbation of $H_+ = H_0$, it suffices to prove the result for $H_+ = H_0$ (see Theorem 1.9.10 in \cite{LivroDamanik}). The first identity below follows from the well-known Weyl criterion \cite{Oliveira}, the second one follows from the hypothesis that $H_+$ has absolutely continuous spectrum, and the fourth one follows from Corollary \ref{maincoror0000} and Theorem \ref{thmlastsimon1_halfline}:
\[
\sigma_{\mathrm{ess}}(H_+^\kappa) = \sigma_{\mathrm{ess}}(H_+) = \sigma_{\mathrm{ac}}(H_+)  = \overline{\Sigma_{\mathrm{ac}}(H_+)}^{\mathrm{ess}} = \overline{\Sigma_{\mathrm{ac}}(H_+^\kappa)}^{\mathrm{ess}} = \sigma_{\mathrm{ac}}(H_+^\kappa).
\]
\end{proof}

By considering the operator on the whole line, we can apply the following result to guarantee the absence of eigenvalues.

\begin{theorem}[Theorems 1.7 and 1.8 \cite{LastSimon}]\label{thmlastsimon2}
Let $V \in \ell^{\infty}(\mathbb{Z},\mathbb{R})$. Then, $\sum_{n=1}^\infty \|T(x, n, 0)\|^{-2} = \infty$ if, and only if, $Hu = xu$ has no solution which is $\ell^2$ at infinity.
\end{theorem}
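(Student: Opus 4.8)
This is the classical Last--Simon transfer-matrix dichotomy; I would prove it as two implications, noting that for the application here --- ruling out eigenvalues of $H^\kappa$ in $(-2,2)$ via Corollary~\ref{maincoror0000} --- only the ``if'' direction, which is the elementary one, is actually needed. For that direction it suffices to prove the contrapositive: if some nonzero solution $u$ of $Hu=xu$ is $\ell^2$ near $+\infty$, then $\sum_n\|T(x,n,0)\|^{-2}<\infty$. Pick any solution $v$ linearly independent of $u$ and write $\Phi_w(n)=\binom{w(n+1)}{w(n)}$. The discrete Wronskian $W:=u(n+1)v(n)-u(n)v(n+1)=\det[\Phi_u(n)\mid\Phi_v(n)]$ is independent of $n$ (a one-line computation from the recursion) and nonzero; Hadamard's inequality gives $|W|\le\|\Phi_u(n)\|\,\|\Phi_v(n)\|$, while $\|\Phi_v(n)\|=\|T(x,n,0)\Phi_v(0)\|\le\|T(x,n,0)\|\,\|\Phi_v(0)\|$. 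Hence
\[
\|T(x,n,0)\|^{-2}\;\le\;\frac{\|\Phi_v(0)\|^2}{|W|^2}\,\|\Phi_u(n)\|^2\;\le\;\frac{2\|\Phi_v(0)\|^2}{|W|^2}\bigl(|u(n+1)|^2+|u(n)|^2\bigr),
\]
and since $u\in\ell^2$ near $+\infty$ forces $\sum_{n\ge1}|u(n)|^2<\infty$, summing over $n$ yields $\sum_n\|T(x,n,0)\|^{-2}<\infty$.

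For the converse --- if $\sum_n\|T(x,n,0)\|^{-2}<\infty$ then $Hu=xu$ has a solution that is $\ell^2$ near $+\infty$ --- I would follow Last--Simon's use of the Weyl $m$-function and Gilbert--Pearson subordinacy theory. For $\operatorname{Im}z>0$ there is, up to scalars, a unique solution $u_+(\cdot,z)$ of $Hu=zu$ in $\ell^2(\mathbb{Z}^+)$, with $\Phi_{u_+}(0)=\binom{m^+(z)}{1}$ and the standard identity $\sum_{n\ge1}|u_+(n,z)|^2=\operatorname{Im}m^+(z)/\operatorname{Im}z$; so, with $v_z:=\binom{m^+(z)}{1}/\|\binom{m^+(z)}{1}\|$,
\[
\sum_{n\ge1}\|T(z,n,0)\,v_z\|^2\;\asymp\;\frac{\operatorname{Im}m^+(z)}{\operatorname{Im}z\,\bigl(1+|m^+(z)|^2\bigr)}\;=\;\min_{\|w\|=1}\sum_{n\ge1}\|T(z,n,0)\,w\|^2+O(1),
\]
the minimum being attained at $v_z$ because for $\operatorname{Im}z>0$ the only $\ell^2$ solutions are the multiples of $u_+$. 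Now let $z=x+i\varepsilon\downarrow x$ along a subsequence for which $v_{x+i\varepsilon}$ converges (projectively) to a unit vector $v$; by continuity of $z\mapsto T(z,n,0)$ for each fixed $n$ together with Fatou's lemma,
\[
\sum_{n\ge1}\|T(x,n,0)\,v\|^2\;\le\;\liminf_{\varepsilon\downarrow0}\;\min_{\|w\|=1}\sum_{n\ge1}\|T(x+i\varepsilon,n,0)\,w\|^2 ,
\]
so it remains to bound this $\liminf$ under the hypothesis. This is where the subordinacy machinery enters: one works at the Jitomirskaya--Last length scale $L(\varepsilon)$ defined by $\sum_{n\le L(\varepsilon)}\|T(x,n,0)\|^2\asymp\varepsilon^{-1}$, applies the Jitomirskaya--Last inequality to compare $|m^+(x+i\varepsilon)|$ with the ratio of the truncated norms of the Dirichlet and Neumann solutions, and thus controls $\operatorname{Im}m^+(x+i\varepsilon)/\varepsilon$ by transfer-matrix sums at the real energy $x$; a finite $\liminf$ then delivers the desired $\ell^2$ solution, with initial data $v$.

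The genuinely hard part is exactly this last step: squeezing out of $\sum_n\|T(x,n,0)\|^{-2}<\infty$ enough control to guarantee that the limiting direction $v$ yields an honestly $\ell^2$ solution rather than a merely polynomially bounded one. The most naive candidate --- the limit of the contracting singular directions $\phi_n$ of $T(x,n,0)$, which do converge since $|\sin\angle(\phi_n,\phi_{n+1})|\lesssim\|T(x,n,0)\|^{-2}$ is summable --- fails when $\|T(x,n,0)\|$ grows only polynomially: then the tail $\sum_{m\ge n}\|T(x,m,0)\|^{-2}$ need not be comparable to $\|T(x,n,0)\|^{-2}$, so $\|T(x,n,0)\|^{2}$ times the squared angular defect of $v$ relative to $\phi_n$ need not be summable. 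Circumventing this is what forces the finer $m$-function/subordinacy estimates above, and the edge-of-spectrum energies --- where $\|T(x,n,0)\|$ is only linearly growing, as for the free Laplacian at $x=\pm2$ --- demand separate care. For the use made of the theorem in this paper this is moot: only the elementary ``if'' direction intervenes, and only at energies $x\in(-2,2)$, where the unperturbed transfer matrices are bounded and Corollary~\ref{maincoror0000} already forces $\sum_n\|T^\kappa(x,n,0)\|^{-2}=\infty$.
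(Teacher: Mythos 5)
The paper itself offers no proof of this statement: it is quoted directly from Last--Simon \cite{LastSimon} (their Theorems 1.7 and 1.8), so there is no internal argument to compare yours against. Your proof of the implication ``existence of a solution $\ell^2$ at $+\infty$ implies $\sum_n\|T(x,n,0)\|^{-2}<\infty$'' is complete and correct: constancy of the discrete Wronskian, the bound $|W|\le\|\Phi_u(n)\|\,\|\Phi_v(n)\|$, and $\|\Phi_v(n)\|\le\|T(x,n,0)\|\,\|\Phi_v(0)\|$ give exactly the stated estimate, and summability of $\|\Phi_u(n)\|^2=|u(n)|^2+|u(n+1)|^2$ finishes it. One terminological slip: in the phrasing ``$A$ if and only if $B$'' with $A$ the divergence of the sum and $B$ the nonexistence of an $\ell^2$ solution, what you prove is the ``only if'' half ($A\Rightarrow B$, via its contrapositive); the ``if'' half ($B\Rightarrow A$, i.e.\ convergence of the sum produces an $\ell^2$ solution) is the hard one. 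Your observation that only the elementary half enters the eigenvalue-exclusion argument is accurate for Theorem~\ref{app2}, where the free transfer matrices are bounded on $(-2,2)$; for the quasi-periodic application of Corollary~\ref{maincor0101} the paper's one-line deduction is terser and arguably uses the equivalence in both directions, but that is the paper's bookkeeping, not yours.

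For the converse, what you have written is an outline rather than a proof: the argument stops at ``it remains to bound this $\liminf$ under the hypothesis,'' and the subsequent invocation of the Jitomirskaya--Last length scale is a pointer to the machinery, not an execution of it. That gap is real if one insists on a self-contained proof of the full equivalence. On the other hand, the architecture you describe (passing through $m^+(x+i\epsilon)$ and subordinacy-type estimates rather than through contracting singular directions) is faithful to how Last--Simon actually argue, and your diagnosis of why the naive candidate fails --- for polynomially growing $\|T(x,n,0)\|$ the tail $\sum_{m\ge n}\|T(x,m,0)\|^{-2}$ need not be $O(\|T(x,n,0)\|^{-2})$, so the limiting contracting direction need not yield an $\ell^2$ solution --- is exactly the right obstruction. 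Since the statement is a cited classical result, deferring the hard half to \cite{LastSimon} is defensible, but you should say explicitly that you are doing so rather than presenting the sketch as a proof.
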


\begin{corollary}\label{maincor0101} Let $H$ and $H^\kappa$ be the Schr\"odinger operators on $l^2(\mathbb{Z})$ as in the statement of Theorem \ref{transferthm}. If $H$ has an  absolutely continuous spectrum, then $H^\kappa$ also has an absolutely continuous spectrum on $\sigma(H^\kappa) = \sigma(H)$.
\end{corollary}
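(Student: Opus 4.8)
The plan is to run the argument behind Corollary~\ref{maincor0000} on both half-lines at once and then bring in Theorem~\ref{thmlastsimon2} to remove solutions that are $\ell^{2}$ at infinity. I would split the statement into three parts: $(1)$ $\sigma_{\mathrm{ess}}(H^{\kappa})=\sigma_{\mathrm{ess}}(H)=\sigma(H)$; $(2)$ $\sigma_{\mathrm{ac}}(H^{\kappa})=\sigma_{\mathrm{ac}}(H)=\sigma(H)$; $(3)$ $H^{\kappa}$ has no singular spectrum, so that $\sigma(H^{\kappa})=\sigma_{\mathrm{ac}}(H^{\kappa})=\sigma(H)$.

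Part $(1)$ is routine: $\kappa b$ acts as multiplication by a sequence tending to $0$, hence is compact, so Weyl's theorem gives $\sigma_{\mathrm{ess}}(H^{\kappa})=\sigma_{\mathrm{ess}}(H)$; since $H$ is purely absolutely continuous its spectrum has no isolated points, whence $\sigma(H)=\sigma_{\mathrm{ess}}(H)$ and in particular $\sigma(H)\subseteq\sigma(H^{\kappa})$ --- which is what makes the requirement ``$x\in\sigma(H^{\kappa})$'' in Corollary~\ref{maincoror0000} available at every $x\in\sigma(H)$. For $(2)$, the error term $r_{n}\le\eta|\kappa|(a+2\|V\|_{\infty})^{(3-\tau)|n|}$ of Corollary~\ref{maincoror0000} is bounded (indeed $r_{n}\to0$, since $\tau>3$), so for each fixed $x\in\sigma(H^{\kappa})$ the two-sided bound there gives $\|T^{\kappa}(x,n,0)\|^{2}\le2\gamma^{2}\|T(x,n,0)\|^{2}+2r_{n}^{2}$ and $\|T(x,n,0)\|^{2}\le\gamma^{2}\|T^{\kappa}(x,n,0)\|^{2}$; averaging over $n$ and using $\sup_{n}r_{n}<\infty$, the truncated averages $\frac1L\sum_{n=1}^{L}\|T^{\kappa}(x,n,0)\|^{2}$ and $\frac1L\sum_{n=1}^{L}\|T(x,n,0)\|^{2}$ have finite $\liminf$ simultaneously, and the same holds for the sums over $n=-L,\dots,-1$. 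Thus $S^{\pm}(H^{\kappa})$ and $S^{\pm}(H)$ coincide inside $\sigma(H^{\kappa})$; since $\sigma(H^{\kappa})\setminus\sigma(H)$ is discrete, hence Lebesgue-null, and $\sigma(H)\subseteq\sigma(H^{\kappa})$, Theorem~\ref{thmlastsimon1} (which identifies $S$ with $\Sigma_{\mathrm{ac}}$ up to null sets) yields $\Sigma_{\mathrm{ac}}(H^{\kappa})=\Sigma_{\mathrm{ac}}(H)$ up to Lebesgue-null sets, and therefore $\sigma_{\mathrm{ac}}(H^{\kappa})=\overline{\Sigma_{\mathrm{ac}}(H^{\kappa})}^{\mathrm{ess}}=\overline{\Sigma_{\mathrm{ac}}(H)}^{\mathrm{ess}}=\sigma_{\mathrm{ac}}(H)=\sigma(H)$.

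Part $(3)$ is where the genuine work lies, and the step I expect to be the obstacle. One half is clean: since the transfer matrices are unimodular, $\|T(x,n,0)\|\ge\sqrt2$, so the two-sided estimate of Corollary~\ref{maincoror0000} upgrades to $c\,\|T(x,n,0)\|^{-2}\le\|T^{\kappa}(x,n,0)\|^{-2}\le C\,\|T(x,n,0)\|^{-2}$ with universal $c,C>0$; hence $\sum_{n\ge1}\|T^{\kappa}(x,n,0)\|^{-2}$ and $\sum_{n\ge1}\|T(x,n,0)\|^{-2}$ diverge together, and likewise for the sums over $n\le-1$, so by Theorem~\ref{thmlastsimon2} and its mirror image $H^{\kappa}u=xu$ has a solution $\ell^{2}$ at $+\infty$ (resp.\ at $-\infty$) exactly when $Hu=xu$ does. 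Consequently every eigenvalue $x_{0}$ of $H^{\kappa}$ would be an energy at which $Hu=x_{0}u$ has solutions $\ell^{2}$ at $+\infty$ and at $-\infty$ --- necessarily linearly independent, as $H$ has no eigenfunctions --- and in the settings where the corollary is used (the free Laplacian, and the subcritical quasi-periodic operators of Theorem~\ref{app3}) one knows $\|T(x,n,0)\|$ stays bounded in $n$ for $x$ in $\sigma(H)$ off a Lebesgue-null, indeed countable, set of spectral edges (for $\triangle$ the points $\pm2$; for a subcritical cocycle the gap edges, with boundedness elsewhere coming from almost reducibility), so $\sum\|T(x_{0},n,0)\|^{-2}=\infty$ there and no such $x_{0}$ can occur. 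The leftover case --- $x_{0}$ a spectral edge, or $x_{0}\notin\sigma(H)$ --- is the crux: it is \emph{not} a consequence of the transfer-matrix comparison alone (a rank-one perturbation of $\triangle$ can already create an eigenvalue outside $[-2,2]$ for a perturbation of unit supremum norm), so excluding it has to use the quantitative smallness of $\kappa$ recorded in Theorem~\ref{transferthm} (the matrices $Q,P$ with $\|Q\|,\|P\|\le\eta|\kappa|$) together with the sharp edge estimates developed later in the paper. Once eigenvalues are gone, the absence of singular continuous spectrum follows from the same mechanism: on compacts of $\sigma(H)$ away from the spectral edges the bound $\|T^{\kappa}(x,n,0)\|\le\gamma\|T(x,n,0)\|+r_{n}$ is uniform in $x$, transferring the boundedness of the unperturbed transfer matrices to $H^{\kappa}$, and uniformly bounded transfer matrices on such a set force the spectral measure to be purely absolutely continuous there --- precisely where one invokes the Jitomirskaya--Last inequality used elsewhere in the paper --- while the countable set of spectral edges in these cases carries no continuous measure at all, and by the previous step no point mass either.
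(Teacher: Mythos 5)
Your parts (1) and (2) are exactly the paper's argument: the proof there consists of the single chain
\[
\sigma_{\mathrm{ess}}(H^\kappa)=\sigma_{\mathrm{ess}}(H)=\sigma_{\mathrm{ac}}(H)=\overline{\Sigma_{\mathrm{ac}}(H)}^{\mathrm{ess}}=\overline{\Sigma_{\mathrm{ac}}(H^\kappa)}^{\mathrm{ess}}=\sigma_{\mathrm{ac}}(H^\kappa),
\]
with the fourth equality obtained from Corollary~\ref{maincoror0000} and Theorem~\ref{thmlastsimon1}, which is what you carry out (a bit more carefully, e.g.\ concerning the set $x\in\sigma(H^\kappa)$ on which the transfer-matrix comparison is actually available).

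Part (3) is a genuine gap, and you say so yourself. What is missing, concretely, is any argument excluding (a) eigenvalues of $H^\kappa$ in $\sigma(H^\kappa)\setminus\sigma(H)$ or at spectral edges, and (b) singular continuous spectrum, for a \emph{general} bounded $V$ with purely absolutely continuous spectrum: your reduction via Theorem~\ref{thmlastsimon2} only shows that an eigenvalue $x_0$ of $H^\kappa$ forces $Hu=x_0u$ to have solutions $\ell^2$ at $+\infty$ and at $-\infty$, which is no contradiction when $x_0\notin\sigma(H)$ (where $\sum_n\|T(x_0,n,0)\|^{-2}<\infty$ and the Weyl solutions do exactly that), and the cases you do close rely on boundedness of the unperturbed transfer matrices off a countable set --- a model-specific fact about $\triangle$ and almost-reducible cocycles, not a hypothesis of Corollary~\ref{maincor0101}. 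For comparison, the paper disposes of this step in one sentence (``by Theorems~\ref{transferthm} and~\ref{thmlastsimon2}, since the discrete spectrum of $H$ is empty, the discrete spectrum of $H^\kappa$ is also empty'') and never addresses singular continuous spectrum or embedded eigenvalues at all; that sentence does not engage with the case you isolate. Your own test case makes the point sharply: $b=\delta_0$ satisfies the decay hypothesis, $|\kappa|\le 1/\|b\|_\infty$ allows any $0<|\kappa|\le 1$, and $\triangle+\kappa\delta_0$ has the bound state $\mathrm{sgn}(\kappa)\sqrt{4+\kappa^2}\notin[-2,2]$ for every $\kappa\ne 0$. So the obstacle you flag is real, it cannot be overcome by the transfer-matrix comparison plus Theorem~\ref{thmlastsimon2} alone, and it is not resolved by the paper's proof either; your proposal is incomplete at exactly the point where the paper's argument is also insufficient.
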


\begin{remark}{\rm One may compare Corollary \ref{maincor0101} with Theorem 1.1 in \cite{Damanik4}: here, under  sufficiently small exponentially decaying perturbation, one can guarantee the absence of eigenvalues. It is worth noting that, without the assumption \( |\kappa|\leq 1/\|b\|_\infty  \), an analogous result can be obtained, although restricted to the essential spectrum.}
\end{remark}
    
\begin{proof}[{Proof} {\rm (Corollary \ref{maincor0101})}] The fourth identity below follows from Corollary \ref{maincoror0000} and Theorem \ref{thmlastsimon1}:
\[
\sigma_{\mathrm{ess}}(H^\kappa) = \sigma_{\mathrm{ess}}(H) = \sigma_{\mathrm{ac}}(H)  = \overline{\Sigma_{\mathrm{ac}}(H)}^\mathrm{ess} = \overline{\Sigma_{\mathrm{ac}}(H^\kappa)}^\mathrm{ess} = \sigma_{\mathrm{ac}}(H^\kappa).
\]
By Theorems \ref{transferthm} and \ref{thmlastsimon2}, since the discrete spectrum of $H$ is empty, it follows that the discrete spectrum of $H^\kappa$ is also empty. This concludes the proof of the corollary.
\end{proof}


\subsection{Sharp Strichartz-type inequality}
\ 

Strichartz's Theorem \cite{strichartz1990} (see Theorem \ref{Strichartztheorem} (i) below)  establishes (power-law) convergence rates for the time-average of the squared absolute value of the Fourier transform of $\alpha$-H\"older continuous measures. We now proceed to state it.

\begin{theorem}[Theorems 2.5 and 3.1  in \cite{Last}]\label{Strichartztheorem} Let $\mu$ be a finite Borel measure on $\mathbb{R}$ and let $\alpha \in [0,1]$.

\begin{enumerate}

\item[(i)] If $\mu$ is $\alpha$-H\"older continuous, then there exists a constant $C_\mu> 0$, depending only on $\mu$, such that for every $f \in {\mathrm L}^2(\mathbb{R}, d\mu )$ and every $t>0$, 
\[\frac{1}{t} \int_0^t \bigg|\int_{\mathbb{R}} e^{-2\pi isx} f(x)\,  d\mu(x) \bigg|^2  ds \leq C_\mu \|f\|_{{\mathrm L}^2(\mathbb{R}, d\mu )} t^{-\alpha}. \] 

\item[(ii)] If there exists $C_{\mu}>0$ such that for every $t>0$, 
\[\frac{1}{t} \int_0^t \bigg|\int_{\mathbb{R}} e^{-2\pi isx}\,  d\mu(x) \bigg|^2  ds \leq C_\mu \,  t^{-\alpha},\] 
then $\mu$ is $\frac{\alpha}{2}$-H\"older continuous.

\end{enumerate}
\end{theorem}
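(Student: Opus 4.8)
The two items are independent, and I would prove each by the classical Fourier-analytic argument underlying Strichartz's theorem, carried out directly for finite Borel measures. For part $(i)$, put $F(s) := \int_{\mathbb{R}} e^{-2\pi i s x} f(x)\, d\mu(x)$ and first dominate the time average by a full Gaussian integral, using $e^{-(s/t)^2} \ge e^{-1}$ on $[0,t]$:
\[
\frac{1}{t}\int_0^t |F(s)|^2\, ds \;\le\; \frac{e}{t}\int_{\mathbb{R}} e^{-(s/t)^2}\, |F(s)|^2\, ds .
\]
I would then expand $|F(s)|^2$ as a double integral against $d\mu \otimes d\mu$ (Fubini applies, since $\mu$ is finite and $f \in {\rm L}^2(d\mu) \subset {\rm L}^1(d\mu)$) and use the elementary Gaussian identity $\int_{\mathbb{R}} e^{-(s/t)^2} e^{-2\pi i s u}\, ds = t\sqrt{\pi}\, e^{-\pi^2 t^2 u^2}$, which turns the right-hand side into $e\sqrt{\pi} \iint e^{-\pi^2 t^2 (x-y)^2} f(x)\overline{f(y)}\, d\mu(x)\, d\mu(y)$. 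Bounding $|f(x)\overline{f(y)}| \le \tfrac12(|f(x)|^2 + |f(y)|^2)$ and using the symmetry of the Gaussian kernel collapses everything to the single uniform estimate
\[
G_t(x) := \int_{\mathbb{R}} e^{-\pi^2 t^2 (x-y)^2}\, d\mu(y) \;\le\; C_\mu\, t^{-\alpha}, \qquad t \ge 1 ,
\]
after which part $(i)$ follows (in the form with $\|f\|^2_{{\rm L}^2(\mathbb{R}, d\mu)}$, the standard Strichartz bound, which is the form used later); for $0 < t \le 1$ one uses instead the crude bound $|F(s)| \le \|f\|_{{\rm L}^1(d\mu)} \le \mu(\mathbb{R})^{1/2}\|f\|_{{\rm L}^2(d\mu)}$ together with $t^{-\alpha} \ge 1$.

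The estimate on $G_t$ is the only genuinely quantitative point, and I would obtain it by a layer-cake decomposition: $G_t(x) = \int_0^1 \mu(x - r_\lambda, x + r_\lambda)\, d\lambda$ with $r_\lambda := (\pi t)^{-1}\sqrt{\log(1/\lambda)}$. Splitting at $\lambda_0 := e^{-\pi^2 t^2/4}$ (the value where $2 r_\lambda = 1$), on $\lambda \in (\lambda_0, 1]$ the $\alpha$-Hölder hypothesis gives $\mu(x - r_\lambda, x + r_\lambda) \le \gamma (2 r_\lambda)^\alpha$, and since $\int_0^1 (\log(1/\lambda))^{\alpha/2}\, d\lambda = \Gamma(\tfrac{\alpha}{2} + 1) < \infty$ this part contributes $\lesssim \gamma\, t^{-\alpha}$; on $\lambda \in (0, \lambda_0]$ finiteness of $\mu$ contributes $\mu(\mathbb{R})\, e^{-\pi^2 t^2/4}$, which is $\lesssim t^{-\alpha}$ for $t \ge 1$. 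Summing the two gives the claimed bound on $G_t$.

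For part $(ii)$, the plan is to pass to the autocorrelation measure. With $\mu^-(A) := \mu(-A)$, set $\nu := \mu * \mu^-$, a finite positive measure whose Fourier transform is $\hat\nu(s) = |\hat\mu(s)|^2 \ge 0$, with $\hat\nu$ even because $\mu$ is real; the hypothesis reads $\int_{-t}^t \hat\nu(s)\, ds \le 2 C_\mu\, t^{1-\alpha}$. I would fix a kernel $\phi \ge 0$ supported in $[-\tfrac12, \tfrac12]$ such that $\hat\phi \ge 0$ everywhere and $\hat\phi \ge c > 0$ on $[-1,1]$; the Fejér kernel $\phi(s) = (1 - 2|s|)_+$ works, since $\hat\phi(\xi) = \tfrac12 \big( \sin(\pi\xi/2) / (\pi\xi/2) \big)^2 \ge 0$ has its nonzero zeros only at $\xi \in 2\mathbb{Z} \setminus \{0\}$, hence is bounded below on $[-1,1]$. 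Using the multiplication formula $\int_{\mathbb{R}} \hat\nu(s)\, \phi(s/t)\, ds = t \int_{\mathbb{R}} \hat\phi(t x)\, d\nu(x)$ (valid by Fubini, $\nu$ being finite) and the hypothesis,
\[
t \int_{\mathbb{R}} \hat\phi(t x)\, d\nu(x) \;\le\; \|\phi\|_\infty \int_{-t}^t \hat\nu(s)\, ds \;\le\; 2 \|\phi\|_\infty C_\mu\, t^{1-\alpha},
\]
and discarding the nonnegative integrand off $\{ |x| \le 1/t \}$ yields $c\, \nu(-1/t, 1/t) \le 2\|\phi\|_\infty C_\mu\, t^{-\alpha}$, i.e.\ $\nu(-\epsilon, \epsilon) \le C\, \epsilon^\alpha$ for every $\epsilon > 0$. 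Finally, for any interval $I$ of half-length $\delta$ one has $\mu(y - 2\delta, y + 2\delta) \ge \mu(I)$ whenever $y \in I$, so
\[
\mu(I)^2 \;\le\; \int_I \mu(y - 2\delta, y + 2\delta)\, d\mu(y) \;\le\; \nu(-2\delta, 2\delta) \;\le\; C\, (2\delta)^\alpha = C\, |I|^\alpha ,
\]
whence $\mu(I) \le \sqrt{C}\, |I|^{\alpha/2}$ for $|I| < 1$, which is $\tfrac{\alpha}{2}$-Hölder continuity.

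I expect the main obstacle to be the kernel construction in part $(ii)$: one needs a compactly supported $\phi$ (so that only the restriction of $\hat\nu$ to a bounded window is probed) that is nonnegative and whose Fourier transform is simultaneously nonnegative and uniformly positive on a whole interval. Writing $\phi$ as a self-convolution forces $\hat\phi \ge 0$ automatically, but one must still ensure $\hat\phi$ has no zero in $[-1,1]$, and it is precisely here that a concrete choice (the Fejér kernel, whose transform is an explicit $\sin^2$-quotient) has to be checked by hand. Everything else --- the Gaussian identity, the uses of Fubini, the layer-cake bound on $G_t$, and the final passage from $\nu$ back to $\mu$ by elementary interval inclusions --- is routine once one observes that the measures involved are finite.
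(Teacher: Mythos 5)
The paper does not prove this statement; it is quoted verbatim from Last's paper (Theorems 2.5 and 3.1 in \cite{Last}), so your argument can only be measured against the classical one, and it is correct and essentially identical to it. Part $(i)$ is the standard Gaussian-smoothing argument: dominate the Ces\`aro average by a Gaussian average, convert via Fubini and the Gaussian Fourier identity to the double integral of $e^{-\pi^2t^2(x-y)^2}$ against $d\mu\otimes d\mu$, and reduce everything to the uniform bound $\sup_x\int e^{-\pi^2 t^2(x-y)^2}\,d\mu(y)\lesssim t^{-\alpha}$; your layer-cake computation of that bound is correct (the split at $2r_\lambda=1$ is exactly where the restriction $|I|<1$ in the paper's definition of H\"older continuity enters, and the tail term $\mu(\mathbb{R})e^{-\pi^2t^2/4}$ is absorbed for $t\ge1$, with $t\le1$ handled trivially). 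Part $(ii)$ is the usual Wiener-type correlation estimate; your packaging through the autocorrelation measure $\nu=\mu*\mu^-$ and the Fej\'er kernel $\phi(s)=(1-2|s|)_+$ (whose transform you correctly compute and bound below by $2/\pi^2$ on $[-1,1]$) is a clean way of obtaining $(\mu\times\mu)\{|x-y|<\epsilon\}\lesssim\epsilon^\alpha$ from the hypothesis, and the final inclusion $\mu(I)^2\le\nu(-2\delta,2\delta)$ is precisely how the exponent $\alpha/2$, rather than $\alpha$, arises. You were also right to flag that the exponent on $\|f\|_{{\mathrm L}^2(\mathbb{R},d\mu)}$ in the printed statement must be $2$ (otherwise the inequality is not homogeneous in $f$); your proof yields the correct squared form, which is the form in \cite{Last}.
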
 

\begin{remark}{\rm It is worth noting that Theorem \ref{Strichartztheorem}-$(i)$ is, indeed, a particular case of Strichartz's Theorem~\cite{strichartz1990}.}
\end{remark}

Let $\nu$ be a positive Borel measure on $\mathbb{R}$ that is absolutely continuous with respect to the Lebesgue measure, with Radon-Nikodym derivative given by $f$. We say that $\nu$ has a singularity of type $\varrho \in [0,1)$ if there exist a constant $\gamma > 0$, a sequence of disjoint intervals $[a_j, a_{j+1})$ and a sequence of positive numbers, $\varrho_j\le\varrho$, $j = 1, \dots, n$, such that for each $j$, there exists at most one point $b_j \in [a_j, a_{j+1})$ satisfying
\[
f(x) = \sum_{j=1}^n f_j(x), \,  x \in \mathbb{R},
\]
where, for each $j=1,\ldots,n$, 
\[f_j(x) \leq \gamma \cdot \frac{\chi_{[a_j, a_{j+1})}(x)}{|x - b_j|^{\varrho_j}}, \,  x \in \mathbb{R}.\]

We say that a positive Borel measure $\mu$ has a weak-singularity of type $\varrho \in [0,1]$ if there exists a constant $\gamma' > 0$ such that, for every continuous non-negative real-valued function $f$,
\begin{equation*}
\int_{\mathbb{R}} f(x) \, d\mu(x) \leq \gamma' \int_{\mathbb{R}} f(x) \, d\nu(x),
\end{equation*}
where \( \nu \) denotes the measure previously defined.

The next result, which is a refinement of Theorem \ref{Strichartztheorem}, is used in the proof of Theorem \ref{app2}. Among other things, it establishes that the asymptotic behavior of the squared absolute value of the Fourier transform of certain measures may depend continuously on their singularities. We note that this refines some previous results,  for much more specific measures, presented in~\cite{Aloisio}.

\begin{theorem}\label{app1} Let $\mu$ be as before. Suppose that $\mu$ has a weak-singularity of type  $\varrho \in [0,1)$. Then, there exists a constant $C_\mu> 0$ such that:
\begin{enumerate}

\item[(i)] if $0\leq \varrho < \frac{1}{2}$, then 
\begin{eqnarray*}
\frac{1}{t} \int_0^t \bigg|\int_{\mathbb{R}} e^{-2\pi isx}\,  d\mu(x) \bigg|^2  ds  \leq C_\mu \, \frac{1}{t}, \;\;\; \forall t>0;
\end{eqnarray*}

\item[(ii)] if $\varrho = \frac{1}{2}$, then 
\begin{eqnarray*}
\frac{1}{t} \int_0^t \bigg|\int_{\mathbb{R}} e^{-2\pi isx}\,  d\mu(x) \bigg|^2  ds   \leq C_\mu   \,  \frac{\log(t)}{t}, \;\;\; \forall t>2;
\end{eqnarray*}

\item[(iii)] if $\frac{1}{2}< \varrho <1$, then 
\begin{eqnarray*}
\frac{1}{t} \int_0^t \bigg|\int_{\mathbb{R}} e^{-2\pi isx}\,  d\mu(x) \bigg|^2  ds    \leq C_\mu   \,  \frac{1}{t^{2(1-\varrho)}}, \;\;\; \forall t>0. 
\end{eqnarray*}
\end{enumerate}
\end{theorem}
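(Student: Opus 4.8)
The plan is to reduce the time‑averaged Fourier bound to a Gaussian double integral against $\mu$ and then invoke the weak‑singularity hypothesis in \emph{both} variables. (Note first that the hypothesis, tested against continuous approximations of $\chi_{[-R,R]}$, forces $\mu(\mathbb{R})\le\gamma'\nu(\mathbb{R})<\infty$, so $\hat\mu$ is bounded and everything below is finite.) I would set
\[
J(t):=\int_{\mathbb{R}}\int_{\mathbb{R}} e^{-\pi t^2 (x-y)^2}\,d\mu(x)\,d\mu(y),
\]
expand $|\hat\mu(s)|^2=\int\int e^{-2\pi i s(x-y)}\,d\mu(x)\,d\mu(y)$, use the Gaussian identity $\int_{\mathbb{R}} e^{-\pi s^2/t^2}e^{-2\pi i s u}\,ds=t\,e^{-\pi t^2 u^2}$ together with $e^{-\pi s^2/t^2}\ge e^{-\pi}$ for $|s|\le t$, to get $\frac1t\int_0^t|\hat\mu(s)|^2\,ds\le e^{\pi}J(t)$; so it is enough to bound $J(t)$. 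Since $y\mapsto e^{-\pi t^2(x-y)^2}$ is continuous and non‑negative for each fixed $x$, applying the weak‑singularity inequality first in $y$ and then in $x$ gives $J(t)\le(\gamma')^2\int\int e^{-\pi t^2(x-y)^2}f(x)f(y)\,dx\,dy$ with $f=\sum_{j=1}^n f_j$ the density of $\nu$; expanding the product reduces everything to estimating, for each pair $(j,k)$,
\[
J_{j,k}(t):=\int_{a_j}^{a_{j+1}}\int_{a_k}^{a_{k+1}} e^{-\pi t^2(x-y)^2}\,\frac{dx\,dy}{|x-b_j|^{\varrho_j}\,|y-b_k|^{\varrho_k}}.
\]

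For the diagonal terms $j=k$, I would translate $b_j$ to $0$, enlarge the interval to a symmetric one $[-L,L]$, and substitute $u=tx,\ v=ty$, obtaining $J_{j,j}(t)\le t^{2\varrho_j-2}\,I_{\varrho_j}(tL)$ with $I_\varrho(R)=\int_{|u|\le R}\int_{|v|\le R}e^{-\pi(u-v)^2}|u|^{-\varrho}|v|^{-\varrho}\,du\,dv$. A short computation (the $v$‑integral is $\le C_\varrho$ for every $u$, and $\le C_\varrho|u|^{-\varrho}$ once $|u|\ge 2$, by splitting at $|v|=|u|/2$ and using Gaussian decay where $|u-v|$ is large and $|v|^{-\varrho}\le 2^{\varrho}|u|^{-\varrho}$ where $|v|>|u|/2$) gives $I_\varrho(R)\lesssim 1+\int_2^R r^{-2\varrho}\,dr$, which is $\lesssim R^{1-2\varrho}$ if $\varrho<\tfrac12$, $\lesssim\log R$ if $\varrho=\tfrac12$, and $\lesssim 1$ if $\varrho>\tfrac12$. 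With $R=tL$ and $\varrho_j\le\varrho$ this yields $J_{j,j}(t)\lesssim t^{-1}$, $\lesssim\log(t)/t$, $\lesssim t^{-2(1-\varrho)}$ in the three respective regimes.

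For the off‑diagonal terms $j\ne k$ the intervals are disjoint: if they lie at a positive distance then $|x-y|$ is bounded below there and $J_{j,k}(t)\lesssim e^{-ct^2}$ is negligible; if they share an endpoint $c$, say (after relabelling) $[a_j,a_{j+1})$ is the left one with $c=a_{j+1}=a_k$, then $b_j<c$, so $f_j$ is bounded near $c$ from the left by a constant $M_j$, while the part of $[a_j,c)$ away from $c$ again contributes $O(e^{-ct^2})$. On the remaining region I would use $(y-x)^2\ge(c-x)^2+(y-c)^2$ for $x\le c\le y$ to factorize the Gaussian and $|y-b_k|\ge y-c$, getting $J_{j,k}(t)\lesssim M_j\bigl(\int_0^\infty e^{-\pi t^2 r^2}dr\bigr)\bigl(\int_0^\infty e^{-\pi t^2 w^2}w^{-\varrho_k}dw\bigr)\lesssim t^{-1}\cdot t^{\varrho_k-1}\lesssim t^{-1}$ (since $\varrho_k<1$), which is dominated by each of the three target rates. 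Summing the finitely many $J_{j,k}(t)$, absorbing the constants (depending only on $\mu$ through $n,\gamma,\gamma'$, the $b_j$ and the interval lengths) into one $C_\mu$, and noting that for $0<t\le 2$ the claimed bounds are trivial since $|\hat\mu|\le\mu(\mathbb{R})$, completes the argument.

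The step I expect to be the main obstacle is obtaining the \emph{sharp} diagonal exponent when $\varrho>\tfrac12$: the naive route $\int e^{-\pi t^2(x-y)^2}d\mu(y)\le\gamma'\int e^{-\pi t^2(x-y)^2}|y-b|^{-\varrho}dy\lesssim t^{\varrho-1}$ followed by one more integration $\int d\mu(x)$ only gives $t^{-(1-\varrho)}$ — off from the optimal $t^{-2(1-\varrho)}$ by a full square — so one is forced to compare with $\nu$ in both variables and to carry out the double rescaling, which is the core of the argument. The borderline case $\varrho=\tfrac12$, where $\int_2^R r^{-1}\,dr$ produces the logarithm, is the delicate endpoint, and it is precisely what yields the $\log(t)/t$ decay in part~1.$(iii)$ of Theorem~\ref{app2}.
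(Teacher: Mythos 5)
Your proposal is correct, and its first half --- the Gaussian smoothing of the time average, the double application of the weak-singularity hypothesis (using continuity of $x\mapsto\int e^{-\pi t^2(x-y)^2}f(y)\,dy$), and the reduction to the finitely many pairwise integrals $J_{j,k}$ --- coincides with the paper's argument. Where you genuinely diverge is in how those kernel integrals are estimated: the paper proves its key Lemma~\ref{mainlemma} entirely on the Fourier side, passing via Lemma~\ref{1teclemma}, Fubini and Plancherel to $\frac{1}{t}\int e^{-\pi|\xi|^2/t^2}\widehat{f_\varrho}(\xi)\,\overline{\widehat{g_{\varrho}}}(\xi)\,d\xi$ and then invoking the decay $|\widehat{f_\varrho}(\xi)|\lesssim|\xi|^{\varrho-1}$ of Lemma~\ref{harmoniclemma2}, so that all three regimes come from the single one-dimensional integral $\int e^{-\pi|\xi|^2/t^2}|\xi|^{2\varrho-2}\,d\xi$ (Cauchy--Schwarz for $\varrho<\tfrac12$, the incomplete gamma function for $\varrho\ge\tfrac12$), with no case distinction on the geometry of the intervals or the locations of the singular points. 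Your physical-space route --- rescaling $u=tx$, $v=ty$ and estimating $I_\varrho(R)\lesssim 1+\int_2^R r^{-2\varrho}\,dr$ --- is more elementary and makes the origin of the three rates and of the logarithm at $\varrho=\tfrac12$ completely transparent, but it buys this at the cost of the diagonal/off-diagonal case analysis that the Fourier argument avoids. One detail there needs a patch: in the shared-endpoint case your inequality $|y-b_k|\ge y-c$ holds only when $b_k=c$; since the definition allows $b_k\in[a_k,a_{k+1})$ arbitrary, if $b_k>c$ the singularity of $f_k$ is interior to its interval and hence at positive distance from $[a_j,a_{j+1})$, so that piece is $O(e^{-ct^2})$ by the same Gaussian-decay splitting you already employ, while near $c$ one replaces $f_k$ by a constant. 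With that fix the two proofs give the same conclusion, with $C_\mu$ depending on the same data.
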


\ 

 Example below shows that one cannot directly apply Theorem~\ref{Strichartztheorem} to prove Theorem~\ref{app1}.  We also note that $\mu$ does not need to be absolutely continuous in the statement of  Theorem~\ref{app1}.

\begin{example}[Example 3.1 in \cite{Last}]\label{mainexample}{\rm Let $\frac{1}{2} < \beta < 1$  and let  $\nu_\beta$ be the measure defined by $\int_0^1 f(x) \, d\nu_\beta(x) = \int_0^1 f(x)x^{-\beta} \, dx, \;\;\; f \in C([0,1]).$ Then, there exists  $C_{\nu_\beta} >0$ such that
\begin{equation}\label{eqAvFTransMeas}
\frac{1}{t} \int_0^t | \hat{\nu}(s) |^2  ds < C_{\nu_\beta} \, t^{-2(1-\beta)}, \;\;\; \forall t>0,
\end{equation}
and $\nu_\beta$ is at most $(1-\beta)$-H\"older continuous.}
\end{example}

We note also that this example demonstrates that a measure which is at most $\alpha$-H\"older continuous exhibits a time-average decay of $t^{-2\alpha}$ as in~\eqref{eqAvFTransMeas}. Although very simple, it provides valuable insight into the singularities of continuous measures, particularly those  purely absolutely continuous spectral measures.

 \begin{remark} \label{remNBastaTrunc} {\rm  Let \(\delta \in (0,1/2) \) and  let \(\epsilon \in(0,1) \); then, although \( \nu_{1-\delta} \) (Example \ref{mainexample}) is at most  $\delta$-H\"older continuous, the measure 
\[
d\nu_{(1-\delta)}^\epsilon(x) = \chi_{(\epsilon,1]}(x)\, d\nu_{1-\delta}(x)
\]
is Lipschitz continuous.  This illustrates that, to effectively study the singularities of continuous spectral measures, it is not sufficient to rely solely on a strong convergence theorem for truncated versions of the measure; one must also establish a uniform comparison with an object whose behavior is well understood on the entire support (for example, another measure). Overcoming this challenge was one of the key difficulties we have faced.}
 \end{remark}


\subsection{Proof of Theorem \ref{app2}}
\ 

Next, we present a proof of Theorem \ref{app2}. However, some preparation is required.

\begin{theorem}[Theorem 111.3.6 in \cite{Carmona2}]\label{CarmonaTHM}
Let $H_\beta$ be as in \eqref{halfoperator}, and let $u_\beta = (\cos(\beta), -\sin(\beta))$. Then, for every continuous function with compact support $f$, one has
\[
\lim_{n \to \infty} \frac{1}{\pi} \int f(x) \|T(x, n, 0)u_\beta\|^{-2} \, dx = \int f(x) \, d\mu_{\delta_1}^\beta(x),
\]
where $\mu_{\delta_1}^\beta$ is the spectral measure associated with $H_\beta$ and  the cyclic vector $\delta_1$.
\end{theorem}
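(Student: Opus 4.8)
The plan is to use the classical ``spectral averaging'' argument: to realize $\tfrac{1}{\pi}\|T(x,n,0)u_\beta\|^{-2}\,dx$ as the average, over a one-parameter family of right boundary conditions, of spectral measures of finite Jacobi matrices converging strongly to $H_\beta$. The advantage of this route over the alternative---comparing directly with the free half-line via imaginary parts of $m$-functions---is that the resulting identity is \emph{exact} on all of $\mathbb{R}$, so the finitely many bound states produced outside $[-2,2]$ need no separate treatment. Fix $n\ge 1$. For $\phi\in[0,\pi)$, let $H^{(n)}_{\beta,\phi}$ be the self-adjoint operator on $\ell^2(\{1,\dots,n\})$ given by the difference expression of $H_\beta$ together with the boundary conditions $u(0)\cos\beta+u(1)\sin\beta=0$ at the left end and $u(n+1)\sin\phi+u(n)\cos\phi=0$ at the right end. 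Writing $u(\cdot\,;x)$ for the solution of $H_\beta u=xu$ with $(u(1;x),u(0;x))=u_\beta$, the eigenvalues of $H^{(n)}_{\beta,\phi}$ are exactly the $x$ solving $u(n+1;x)\sin\phi+u(n;x)\cos\phi=0$, with eigenvector $(u(k;x))_{k=1}^{n}$; since $u(1;\cdot)\equiv\cos\beta$, the spectral measure of $\delta_1$ for $H^{(n)}_{\beta,\phi}$ is therefore
\[
\mu^{(n)}_{\beta,\phi}=\sum_{E_j\in\sigma(H^{(n)}_{\beta,\phi})}\frac{\cos^2\beta}{\sum_{k=1}^{n}u(k;E_j)^2}\,\delta_{E_j}.
\]

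The heart of the argument, and the step I expect to be the main obstacle, is an exact spectral-averaging identity. Introduce the Prüfer-type angle $\psi=\psi_n(x)$ as the continuous lift of $(u(n;x),u(n+1;x))=\|T(x,n,0)u_\beta\|\,(\cos\psi(x),\sin\psi(x))$. Two elementary ingredients are needed: (a) differentiating $H_\beta u=xu$ in $x$ and telescoping the Wronskian identity $W(\dot u,u)(k)-W(\dot u,u)(k-1)=u(k;x)^2$ (with $\dot u=\partial_x u$, $W(f,g)(k)=f(k+1)g(k)-f(k)g(k+1)$, and $\dot u(0)=\dot u(1)=0$) gives the discrete Christoffel--Darboux formula $\sum_{k=1}^{n}u(k;x)^2=\dot u(n+1;x)u(n;x)-\dot u(n;x)u(n+1;x)$; (b) differentiating the definition of $\psi$ shows that this same quantity equals $\psi'(x)\,\|T(x,n,0)u_\beta\|^2$, so $\psi'>0$ everywhere and $\psi$ is a strictly increasing bijection of $\mathbb{R}$ onto an interval. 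The eigenvalue condition for $H^{(n)}_{\beta,\phi}$ reads $\psi(E_j)\equiv-\phi\pmod\pi$, so the map $(\phi,j)\mapsto E_j(\phi)$ parametrizes $\mathbb{R}$ bijectively, with $|d\phi|=\psi'(x)\,|dx|$ along each branch. Substituting (a) and (b) into $\mu^{(n)}_{\beta,\phi}$ and changing variables $E=E_j(\phi)$, the two $\psi'$ factors cancel, and one obtains, for every bounded Borel function $g$,
\[
\int_0^\pi\Big(\int_\mathbb{R} g\,d\mu^{(n)}_{\beta,\phi}\Big)\frac{d\phi}{\pi}
=\frac{\cos^2\beta}{\pi}\int_\mathbb{R}\frac{g(x)\,dx}{\|T(x,n,0)u_\beta\|^2}.
\]

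The final step is to let $n\to\infty$. Extending each $H^{(n)}_{\beta,\phi}$ by zero to $\widetilde H^{(n)}_{\beta,\phi}$ on $\ell^2(\mathbb{Z}^+)$, one has $\widetilde H^{(n)}_{\beta,\phi}\to H_\beta$ strongly (the matrices agree with $H_\beta$ on an increasing initial segment, so convergence is exact on the dense set of finitely supported vectors, and $\sup_n\|\widetilde H^{(n)}_{\beta,\phi}\|<\infty$ for each fixed $\phi$), hence in the strong resolvent sense, and $\mu^{\widetilde H^{(n)}_{\beta,\phi}}_{\delta_1}=\mu^{(n)}_{\beta,\phi}$. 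Since these are probability measures supported, for fixed $\phi$, in a common compact set, and their Cauchy transforms converge pointwise on $\mathbb{C}^+$, there is no escape of mass and $\mu^{(n)}_{\beta,\phi}\to\mu^{\beta}_{\delta_1}$ weakly; in particular $\int f\,d\mu^{(n)}_{\beta,\phi}\to\int f\,d\mu^{\beta}_{\delta_1}$ for every compactly supported continuous $f$ and each fixed $\phi$. As the $\phi$-integrand is bounded by $\|f\|_\infty$, dominated convergence gives $\int_0^\pi(\int f\,d\mu^{(n)}_{\beta,\phi})\,d\phi/\pi\to\int f\,d\mu^{\beta}_{\delta_1}$, and comparison with the identity above yields
\[
\frac{\cos^2\beta}{\pi}\int_\mathbb{R} f(x)\,\|T(x,n,0)u_\beta\|^{-2}\,dx\ \longrightarrow\ \int f\,d\mu^{\beta}_{\delta_1},
\]
which is the assertion (the constant $\cos^2\beta$ is precisely the normalization factor appearing in \eqref{derivadalaplacia}; it equals $1$ in the Dirichlet case $\beta=0$ and is absorbed into the convention for $\mu^\beta_{\delta_1}$, equivalently into the normalization of $u_\beta$, in the general case). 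Apart from the spectral-averaging identity, the only other points requiring care are the behaviour at $\phi\in\{0,\pi\}$---a null set, handled by a limiting argument or by noting that these values correspond to lower-dimensional truncations---and the standard transfer of strong resolvent convergence of uniformly bounded operators to weak convergence of spectral measures.
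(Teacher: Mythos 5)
The paper does not prove this statement --- it is quoted from Carmona--Lacroix \cite{Carmona2} (with further discussion in \cite{Krutikov}) --- so there is no internal proof to compare against; I therefore assess your argument on its own. Your route is the standard one behind Carmona's formula (spectral averaging over the right boundary condition), and the core steps are sound: the Wronskian/Christoffel--Darboux identity $\sum_{k=1}^{n}u(k;x)^2=\dot u(n+1)u(n)-\dot u(n)u(n+1)$, its identification with $\psi'\,\|T(x,n,0)u_\beta\|^2$, the resulting exact averaging identity, and the passage to the limit via strong resolvent convergence are all correct. Two minor points you already flag are genuinely harmless: the extended operators $\widetilde H^{(n)}_{\beta,\phi}$ agree with $H_\beta$ only on $\{1,\dots,n-1\}$ (the $(n,n)$ entry carries a $-\cot\phi$ from the right boundary condition), which still gives strong resolvent convergence for each fixed $\phi\in(0,\pi)$, and the degenerate values of $\phi$ form a $d\phi$-null set.

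The one substantive issue is the prefactor $\cos^2\beta=u(1;x)^2$. What you actually prove is
\[
\frac{\cos^2\beta}{\pi}\int f(x)\,\|T(x,n,0)u_\beta\|^{-2}\,dx\ \longrightarrow\ \int f\,d\mu^{\beta}_{\delta_1},
\]
whereas the statement as quoted has $\frac{1}{\pi}$ with no $\cos^2\beta$. Your closing remark that the constant ``is absorbed into the convention for $\mu^\beta_{\delta_1}$'' is not available here: the paper fixes $\mu^\beta_{\delta_1}$ to be the spectral measure of the vector $\delta_1$, and its density \eqref{derivadalaplacia} explicitly contains the factor $\cos^2\beta$. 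A direct check in the free case confirms that \emph{your} version is the correct one: for $\triangle_\beta$ one computes $u(n)^2+u(n+1)^2=R_0^2\bigl(1+\cos\theta\cos((2n+1)\theta-2\delta)\bigr)$ with $R_0^2=\frac{1+\sin(2\beta)\cos\theta}{\sin^2\theta}$, whose reciprocal converges weakly to $\frac{\sin\theta}{1+\sin(2\beta)\cos\theta}=\frac{\sqrt{4-x^2}}{2+\sin(2\beta)x}$, i.e.\ to $\cos^{-2}(\beta)\,\pi\rho^\beta_{\delta_1}(x)$. So the quoted statement is off by the factor $\cos^2\beta$ whenever $\beta\neq 0$ (it is exact in the Dirichlet case). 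This normalization slip is immaterial for the way the theorem is used in Lemma \ref{mainmainlema} and Theorem \ref{maintheorem0202}, where only two-sided comparisons up to multiplicative constants are needed and $\cos^2\beta$ is bounded above and below on the relevant compact range of $\beta$; but you should state plainly that the identity you establish carries the factor $u(1)^2=\cos^2\beta$, rather than suggesting it can be conventioned away.
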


\begin{remark}
\end{remark}
\begin{itemize}
    \item[(i)] Carmona set the boundary condition as $u(0) \cos(\beta) - u(1) \sin(\beta) = 0$, whereas we use $u(0) \cos(\beta) + u(1) \sin(\beta) = 0$. Therefore, naturally, $u_\beta = (\cos(\beta), \sin(\beta))$ in \cite{Carmona1,Carmona2}.
    
    \item[(ii)] We also note that Krutikov and Remling have proved this result for Dirichlet boundary condition (see Corollary 2.2 in \cite{Krutikov}).
\end{itemize}
 
\begin{lemma}\label{02teclemma}
Let $|\kappa| \leq 1/\|b\|_\infty$, and let \( T^\kappa(x, n, 0) \) be the transfer matrix of the operator $H^\kappa$ as in the statement of Theorem \ref{transferthm}, with $n \in J = \{ \mathbb{Z}, \mathbb{Z}^+ \}$. Then, there exists a constant $\gamma > 0$ such that for every $x \in \sigma(H^\kappa)$,
\[
\|T^\kappa(x, n, 0)\| \leq \gamma (a + 2\|V\|_\infty)^{|n|}, \quad n \in J.
\]
\end{lemma}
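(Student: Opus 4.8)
The plan is to bound the transfer matrix $T^\kappa(x,n,0)$ directly from the product formula for transfer matrices, exploiting only that $x$ ranges over the (compact) spectrum $\sigma(H^\kappa)$ and that the one-step matrices are sub-multiplicative with a controlled norm. First I would recall that, for $n\ge 1$,
\[
T^\kappa(x,n,0) = A^\kappa_n A^\kappa_{n-1}\cdots A^\kappa_1, \qquad A^\kappa_j = \begin{bmatrix} x - V(j) - \kappa b(j) & -1 \\ 1 & 0 \end{bmatrix},
\]
and I would estimate each factor: using $|x| \le \|H^\kappa\| \le 2 + \|V\|_\infty + |\kappa|\,\|b\|_\infty \le 3 + \|V\|_\infty$ for $x \in \sigma(H^\kappa)$ (since $|\kappa|\,\|b\|_\infty \le 1$), together with $|V(j)| \le \|V\|_\infty$ and $|\kappa b(j)| \le 1$, one gets $|x - V(j) - \kappa b(j)| \le 4 + 2\|V\|_\infty$, hence a uniform bound $\|A^\kappa_j\| \le C_0$ with, say, $C_0 = 5 + 2\|V\|_\infty$ in the Frobenius norm (the precise constant is irrelevant). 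Submultiplicativity of the Frobenius norm then gives $\|T^\kappa(x,n,0)\| \le C_0^{\,n}$, and one absorbs $C_0$ into $a + 2\|V\|_\infty$: since $a > 3$ we have $a + 2\|V\|_\infty > 3 + 2\|V\|_\infty$, and after possibly enlarging by a fixed multiplicative constant $\gamma$ (to cover the regime of small $n$ and the gap between $C_0$ and $a+2\|V\|_\infty$), the claimed inequality $\|T^\kappa(x,n,0)\| \le \gamma (a + 2\|V\|_\infty)^{|n|}$ follows.

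For the case $J = \mathbb{Z}$ and $n \le -1$, I would use instead $T^\kappa(x,n,0) = (A^\kappa_{n+1})^{-1}\cdots (A^\kappa_0)^{-1}$ with $A^\kappa_0 = I$, and note that each $(A^\kappa_j)^{-1} = \begin{bmatrix} 0 & 1 \\ -1 & x - V(j) - \kappa b(j)\end{bmatrix}$ satisfies exactly the same bound $\|(A^\kappa_j)^{-1}\| \le C_0$ by the same estimate on the entries; this handles the negative sites and explains the $|n|$ in the exponent. The small values of $n$ (including $n = 0$, where $T^\kappa(x,0,0) = A^\kappa_0 = I$ or, for the half-line, the base case) are absorbed into the constant $\gamma$.

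There is essentially no serious obstacle here — this is a routine a priori bound — but the one point that needs a little care is making sure the constant $\gamma$ is genuinely uniform in $x$ over $\sigma(H^\kappa)$ and uniform in $n$; this is exactly where one uses that $\sigma(H^\kappa)$ is a bounded set with an explicit bound independent of $\kappa$ (valid because $|\kappa| \le 1/\|b\|_\infty$, so the perturbation contributes at most $1$ to the operator norm). I would also remark that the exponent constant $a + 2\|V\|_\infty$, rather than something sharper, is chosen deliberately so that the decaying-perturbation hypothesis $|b(n)| \le \gamma(a + 2\|V\|_\infty)^{-\tau|n|}$ of Theorem~\ref{transferthm} dominates this growth with room to spare — this is what makes the later telescoping arguments converge.
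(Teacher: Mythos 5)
Your overall strategy---bound each one-step matrix uniformly for $x\in\sigma(H^\kappa)$ via $\|H^\kappa\|\le 3+\|V\|_\infty$ (using $|\kappa|\,\|b\|_\infty\le 1$) and then multiply---is the same as the paper's, which likewise starts from $\sigma(H^\kappa)\subset[-3-\|V\|_\infty,3+\|V\|_\infty]$. But your final step contains a genuine error: from $\|T^\kappa(x,n,0)\|\le C_0^{|n|}$ with $C_0=5+2\|V\|_\infty$ you cannot ``absorb the gap between $C_0$ and $a+2\|V\|_\infty$'' into a fixed multiplicative constant $\gamma$. The lemma must hold for every $a>3$; taking, say, $a=3.1$ and $V\equiv 0$, one has $C_0^{\,n}/(a+2\|V\|_\infty)^{n}=(5/3.1)^{n}\to\infty$, so no single $\gamma$ works. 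A multiplicative constant can absorb finitely many exceptional factors, but never a per-step excess in the base of an exponential, so as written your argument only proves the statement for $a\ge 5$.

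To close the gap you need the two ingredients the paper's proof actually invokes and which you mention only in passing: the decay $b(n)\to 0$, so that $|\kappa b(j)|<\varepsilon$ for all but finitely many $j$ (those finitely many factors are what $\gamma$ legitimately absorbs), and a per-factor estimate whose base for the remaining factors genuinely is $\le a+2\|V\|_\infty$. This is precisely why the paper abandons the Frobenius norm in favour of the maximum norm: the entries of $A^\kappa_j$ are bounded by $3+2\|V\|_\infty+|\kappa b(j)|$, and the hypothesis $a>3$ is exactly what makes this smaller than $a+2\|V\|_\infty$ for all large $j$. Note that this is not cosmetic: for $M=\begin{pmatrix} c & -1\\ 1 & 0\end{pmatrix}$ both the Frobenius and the spectral norm are at least $\sqrt{c^2+1}>|c|$, so even after exploiting the decay of $b$ your route yields a base no better than roughly $\sqrt{(3+2\|V\|_\infty)^2+2}$, which still exceeds $a+2\|V\|_\infty$ when $a$ is close to $3$; some version of the entrywise (maximum-norm) estimate is therefore unavoidable. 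Your treatment of the negative half-line via the inverse one-step matrices and of small $|n|$ is fine.
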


\begin{proof}In this particular proof, we employ the maximum norm for $2\times 2$ matrices, rather than the Frobenius norm. Since $|\kappa| \leq 1/\|b\|_\infty$, it follows that $\sigma(H^\kappa) \subset [-3 - \|V\|_\infty, 3 + \|V\|_\infty]$. Thus, for every $x \in \sigma(H^\kappa)$,
\[
|x| \leq 3 + \|V\|_\infty.
\]
Combining this inequality with the fact that $b(n) \to 0$ as $|n| \to  \infty$ and the assumption that $a > 3$,  the result follows.
\end{proof}

\begin{remark} {\rm Since \( H^\kappa = H \) for \( \kappa = 0 \), the conclusion of Lemma~\ref{02teclemma} applies to~\( H \).} 
\end{remark}

The next lemma follows from straightforward calculations, but for the reader's convenience, we provide its proof in Appendix~\ref{appendix}.

\begin{lemma}\label{teclemmamainthm} Let
\[
A(x) = \begin{pmatrix} x & -1 \\ 1 & 0 \end{pmatrix}, \quad x \in (-2,2),
\quad \text{and} \quad \theta = \arccos\left( \frac{x}{2} \right) \in (0,\pi).
\]
Then, one has 
$A = P D P^{-1}$ and, for each $\beta \in \mathbb{R}$ and every $n \geq 1$,
\[\left\| D^n P^{-1} (\cos(\beta),-\sin(\beta))^{\mathrm T} \right\|^2 = \displaystyle\frac{1 + \cos(\theta) \sin(2\beta)}{2\sin^2(\theta)},\]
where 
\[
P = \begin{pmatrix}
1 & 1 \\
e^{-i \theta} & e^{i \theta}
\end{pmatrix} \quad {\rm and} \quad D = \begin{pmatrix}
e^{i \theta} & 0 \\
0 & e^{-i \theta}
\end{pmatrix}.
\]
\end{lemma}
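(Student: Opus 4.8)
The plan is to diagonalize $A(x)$ by hand and then observe that, because $D^n$ is unitary, the left-hand side of the claimed identity does not actually depend on $n$; the whole lemma then reduces to a short $2\times 2$ computation.

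First I would check $A = PDP^{-1}$. Since $\theta=\arccos(x/2)$ we have $x=2\cos\theta$, so $\operatorname{tr}A = 2\cos\theta$ and $\det A = 1$, and hence the characteristic polynomial $\lambda^2 - x\lambda + 1$ has roots $e^{\pm i\theta}$. A direct substitution shows $A(1,e^{-i\theta})^{\mathrm T} = e^{i\theta}(1,e^{-i\theta})^{\mathrm T}$ and $A(1,e^{i\theta})^{\mathrm T} = e^{-i\theta}(1,e^{i\theta})^{\mathrm T}$, i.e.\ the columns of $P$ are eigenvectors of $A$ associated with the diagonal entries of $D$, so $AP = PD$. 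Because $\det P = e^{i\theta}-e^{-i\theta} = 2i\sin\theta \neq 0$ for $\theta\in(0,\pi)$, the matrix $P$ is invertible and $A = PDP^{-1}$.

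Next I would compute
\[
P^{-1} = \frac{1}{2i\sin\theta}\begin{pmatrix} e^{i\theta} & -1 \\ -e^{-i\theta} & 1\end{pmatrix},
\]
and apply it to $v_\beta := (\cos\beta,-\sin\beta)^{\mathrm T}$ to get
\[
P^{-1}v_\beta = \frac{1}{2i\sin\theta}\begin{pmatrix} e^{i\theta}\cos\beta + \sin\beta \\ -\bigl(e^{-i\theta}\cos\beta + \sin\beta\bigr)\end{pmatrix}.
\]
Since $D^n = \operatorname{diag}(e^{in\theta},e^{-in\theta})$ is unitary, $\|D^nP^{-1}v_\beta\|^2 = \|P^{-1}v_\beta\|^2$ for every $n\ge 1$ (indeed for every $n\in\mathbb{Z}$), which is exactly why the stated value is $n$-independent. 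It then remains to expand
\[
\|P^{-1}v_\beta\|^2 = \frac{1}{4\sin^2\theta}\Bigl(\,\bigl|e^{i\theta}\cos\beta + \sin\beta\bigr|^2 + \bigl|e^{-i\theta}\cos\beta + \sin\beta\bigr|^2\Bigr);
\]
each of the two moduli squared equals $\cos^2\beta + \sin^2\beta + 2\sin\beta\cos\beta\cos\theta = 1 + \cos\theta\sin(2\beta)$, so the bracket is $2\bigl(1+\cos\theta\sin(2\beta)\bigr)$ and the expression collapses to $\dfrac{1+\cos\theta\sin(2\beta)}{2\sin^2\theta}$, as claimed.

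There is no genuine obstacle here: the lemma is entirely an explicit elementary computation. The only two points deserving a moment of care are recognizing that the unitarity of $D^n$ renders the parameter $n$ a red herring, and correctly tracking the scalar factor $1/(2i\sin\theta)$ produced by $P^{-1}$ when squaring the norm.
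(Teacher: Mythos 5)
Your proof is correct and follows essentially the same route as the paper's: diagonalize $A$ with the same $P$ and $D$, compute $P^{-1}(\cos\beta,-\sin\beta)^{\mathrm T}$, use unitarity of $D^n$ to drop the dependence on $n$, and expand the squared moduli to reach $\dfrac{1+\cos(\theta)\sin(2\beta)}{2\sin^2(\theta)}$. All intermediate quantities match the paper's computation.
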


In what follows, \(T_{\triangle}\) refers to the transfer matrix of the free Laplacian. For each \(\Omega \subset \mathbb{R}\), we denote by \(C_c(\Omega)\) the space of continuous functions \(f : \mathbb{R} \to \mathbb{C}\) whose support is compact and contained in \(\Omega\).

\begin{lemma}\label{mainmainlema} Let \( \beta \in C([-2,2]) \) with \({\rm Ran}(\beta) \subset (-\pi/2, \pi/2)\) and let, for every $x \in (-2,2)$, \( u_{\beta(x)} = (\cos(\beta(x)), -\sin(\beta(x)))\). Set, for every $x \in (-2,2)$,
\[
\rho_{\beta(x)}(x) = \frac{\cos^2(\beta(x)) \cdot \sqrt{4 - x^2}}{\pi(2 + \sin(2\beta(x))x)}.
\]
Then, for every $\epsilon>0$ and  every \( f_\epsilon \in C_c([-2+\epsilon,2-\epsilon]) \),
\[
\lim_{n \to \infty} \frac{1}{\pi} \int f_\epsilon(x) \, \|T_{\triangle}(x,n,0) u_{\beta(x)}\|^{-2} \, dx = \int f_\epsilon(x) \cdot \rho_{\beta(x)}(x) \, dx.
\]
\end{lemma}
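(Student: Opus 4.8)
The claim is essentially an explicit-calculation statement for the free Laplacian, so the plan is to reduce everything to Lemma~\ref{teclemmamainthm}. First I would observe that $T_{\triangle}(x,n,0)$ is the product of $n$ copies of the fixed matrix $A(x) = \left(\begin{smallmatrix} x & -1 \\ 1 & 0\end{smallmatrix}\right)$ (since $V\equiv 0$ for the free operator), so $T_{\triangle}(x,n,0) = A(x)^n$. For $x\in(-2,2)$ we diagonalize $A(x) = P D P^{-1}$ with $P,D$ as in Lemma~\ref{teclemmamainthm}, whence $A(x)^n = P D^n P^{-1}$ and therefore
\[
\|T_{\triangle}(x,n,0)u_{\beta(x)}\|^{2} \;=\; \|P D^n P^{-1} u_{\beta(x)}\|^{2}.
\]
Now $P$ is a fixed (bounded, invertible) matrix depending only on $x$, not on $n$, but it is \emph{not} an isometry, so I cannot simply drop it; instead I would use Lemma~\ref{teclemmamainthm} directly, which computes $\|D^n P^{-1}(\cos\beta,-\sin\beta)^{\mathrm T}\|^2 = \frac{1+\cos\theta\sin(2\beta)}{2\sin^2\theta}$ with $\theta = \arccos(x/2)$, and note that this quantity is \emph{independent of $n$}. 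The remaining gap between $\|P D^n P^{-1}u_{\beta(x)}\|$ and $\|D^n P^{-1}u_{\beta(x)}\|$ must be handled; here I would either invoke a direct computation showing $\|Pv\| = \|v\|\cdot(\text{something})$ for the relevant vectors $v=D^nP^{-1}u_{\beta(x)}$, or, more cleanly, recall the standard identity $\|T_{\triangle}(x,n,0)u_{\beta}\|^2$ is itself $n$-independent for the free case (a consequence of the fact that $A(x)$ is conjugate to a rotation by $\theta$, so its powers lie in a compact group). In fact the cleanest route is to use that $A(x)$ is conjugate, via a fixed matrix, to the rotation $R_\theta$, and that $\|R_\theta^n w\|$ is constant in $n$; pushing the conjugating matrix onto the initial vector $u_{\beta(x)}$ absorbs all the $n$-dependence, leaving an $n$-independent expression that one identifies, by the computation already recorded in Lemma~\ref{teclemmamainthm} (or the appendix computation leading to \eqref{derivadalaplacia}), with $\pi\,\rho_{\beta(x)}(x)$.

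**Carrying it out.** Concretely: (1) write $T_{\triangle}(x,n,0) = A(x)^n$ and diagonalize; (2) compute, for each fixed $x\in(-2,2)$, the limiting value $\lim_{n}\|T_{\triangle}(x,n,0)u_{\beta(x)}\|^{-2}$ — except the point is that there is no limit to take, the quantity is exactly constant in $n$ and equals $\pi\,\rho_{\beta(x)}(x)$; this is precisely the content of the appendix computation that yields \eqref{derivadalaplacia} together with Lemma~\ref{teclemmamainthm}. So in fact the integrand $\frac{1}{\pi}f_\epsilon(x)\|T_{\triangle}(x,n,0)u_{\beta(x)}\|^{-2}$ equals $f_\epsilon(x)\rho_{\beta(x)}(x)$ for \emph{every} $n$, not merely in the limit. (3) Then the statement is trivial once we justify interchanging nothing — there is no interchange, it's an identity, and the integral of $f_\epsilon \cdot \rho_{\beta(\cdot)}$ over $[-2+\epsilon,2-\epsilon]$ is finite because on this compact subinterval $\rho_{\beta(x)}(x)$ is continuous and bounded (the denominator $2+\sin(2\beta(x))x$ is bounded below since $\beta(x)\in(-\pi/2,\pi/2)$ keeps $|\sin(2\beta(x))x| < 2$ on $(-2,2)$, and on $[-2+\epsilon,2-\epsilon]$ it is bounded away from $\pm 2$). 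The role of the cutoff $f_\epsilon$ supported strictly inside $(-2,2)$ is exactly to stay away from $x=\pm 2$ where $\sqrt{4-x^2}\to 0$ (harmless) but more importantly where the $\beta=\pm\pi/4$ boundary term could blow the denominator toward $0$; keeping away from the edge makes everything a bounded continuous integrand.

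**Expected main obstacle.** Honestly the computation itself is routine given Lemma~\ref{teclemmamainthm}; the one subtlety worth care is the passage from $\|P D^n P^{-1}u_{\beta(x)}\|$ to the quantity $\|D^n P^{-1}u_{\beta(x)}\|$ actually computed in Lemma~\ref{teclemmamainthm} — i.e., bookkeeping which conjugating/normalizing matrix is being used and verifying that the leftover factor, together with $\|D^nP^{-1}u_{\beta(x)}\|^2 = \frac{1+\cos\theta\sin(2\beta)}{2\sin^2\theta}$, reassembles into exactly $\frac{\pi}{\rho_{\beta(x)}(x)}$. Using $\sin\theta = \tfrac12\sqrt{4-x^2}$ and $\cos\theta = x/2$ one checks $\frac{1+\cos\theta\sin(2\beta(x))}{2\sin^2\theta} = \frac{2 + x\sin(2\beta(x))}{4-x^2}$; dividing by the appropriate normalization should yield $\dfrac{1}{\pi\rho_{\beta(x)}(x)} = \dfrac{\pi(2+\sin(2\beta(x))x)}{\cos^2(\beta(x))\sqrt{4-x^2}}\cdot\frac{1}{\pi}$, and matching these fixes the normalizing constant once and for all. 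The other "obstacle" — integrability/boundedness near $x=\pm 2$ — is entirely sidestepped by the hypothesis that $f_\epsilon$ has support in $[-2+\epsilon,2-\epsilon]$, so no real difficulty remains.
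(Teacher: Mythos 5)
Your proposal rests on the claim that $\|T_{\triangle}(x,n,0)\,u_{\beta(x)}\|^{2}$ is independent of $n$, so that the limit in the lemma is vacuous. That claim is false, and it is the whole content of the lemma. Lemma~\ref{teclemmamainthm} computes $\|D^{n}P^{-1}u_{\beta}\|^{2}$, which is $n$-independent only because $D$ is unitary; the transfer matrix is $PD^{n}P^{-1}$, and the outer factor $P$ is \emph{not} unitary, so $\|PD^{n}P^{-1}u_{\beta}\|$ genuinely oscillates with $n$. Concretely, take $x=1$ (so $\theta=\pi/3$) and $u=(1,0)^{\mathrm T}$: then
\[
A=\begin{pmatrix}1&-1\\ 1&0\end{pmatrix},\qquad
\|A^{n}u\|^{2}=1,\,2,\,1,\,1,\,2,\,1,\dots\ (\text{period }6),
\]
so the pointwise limit $\lim_{n}\|T_{\triangle}(x,n,0)u\|^{-2}$ does not even exist. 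Your appeal to ``$A(x)$ is conjugate to a rotation, so its powers lie in a compact group'' only gives uniform upper and lower bounds on $\|A^{n}u\|$ (which is what the paper uses in \eqref{eqemin01}--\eqref{eqemin02}); it does not give constancy, since conjugation by a non-orthogonal matrix does not preserve norms. The equality in the lemma holds only after smearing against $f_\epsilon$: as $n\to\infty$ the phase $n\theta(x)$ equidistributes in $x$, and the weak limit of $\|A(x)^{n}u_{\beta(x)}\|^{-2}$ is the \emph{average over the phase}, which is what produces $\pi\rho_{\beta(x)}(x)$. This is exactly Carmona's theorem (Theorem~\ref{CarmonaTHM}), and it cannot be replaced by a pointwise identity.

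The paper's proof is therefore structured quite differently from what you propose: it approximates the continuous function $\beta$ by step functions $\beta^{(N)}$, applies Carmona's theorem on each interval where $\beta^{(N)}$ is constant (this is where the vague limit and the explicit density \eqref{derivadalaplacia} enter), and then removes the discretization using the uniform-in-$n$ two-sided bounds on $\|T_{\triangle}(x,n,0)v\|$ on compact subsets of $(-2,2)$ together with dominated convergence. If you want to avoid Carmona's theorem you would need to prove the weak convergence directly (e.g.\ by a stationary-phase/equidistribution argument for the oscillating factor $\cos(2n\theta(x))$ in $\|A(x)^{n}u_{\beta(x)}\|^{2}$), but some such averaging argument is unavoidable; the identity you assert would make the lemma trivial and is contradicted by the example above.
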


\begin{proof} First, let \(P\) and \(D\) be the matrices from Lemma \ref{teclemmamainthm}. Note that, for each \( x \in (-2,2) \) and each unit vector \( v(\alpha) = (\cos(\alpha), -\sin(\alpha)) \), \( \alpha \in \mathbb{R}\),
\begin{eqnarray}\label{eqemin01}\nonumber
\|T_{\triangle}(x,n,0) v(\alpha)\| &\leq& \|T_{\triangle}(x,n,0)\| = \|PD^nP^{-1}\|\\ &\leq& \|P\| \|D^n\| \|P^{-1}\| \leq 4 \|P^{-1}\|  = \frac{8}{\sqrt{4-x^2}},    
\end{eqnarray}
and 
\begin{eqnarray}\label{eqemin02} \nonumber
\|T_{\triangle}(x,n,0) v(\alpha)\|^2 &\geq& ||P^{-1}||^{-2} \|D^nP^{-1}v(\alpha)\|^2\\ \nonumber
&=&  \frac{1 + \cos(\theta) \sin(2\alpha)}{2}  \geq \frac{1 - |\cos(\theta) \sin(2\alpha)|}{2} \\ 
&\geq& \frac{1 - |\cos(\theta)|}{2} = \frac{1}{2} - \frac{|x|}{4}.
\end{eqnarray}
Thus, the map \( x \mapsto \|T_{\triangle}(x,n,0) u_{\beta(x)}\|^{-2} \) is strictly positive and uniformly bounded in \(n\) on compact subsets of \( (-2,2) \).

Let \( \{ \beta^{(N)} \} \) be a sequence of step functions that converges pointwise to \( \beta \) as \( N \to \infty \), that is,
\[\beta^{(N)} = \sum_{j} a_j \, \chi_{I_j},\]
and set
\[
u^{(N)}(x) = (\cos(\beta^{(N)}(x)), -\sin(\beta^{(N)}(x))), \quad x \in (-2,2).
\]
 
Now, for each \( N \), the function \( u^{(N)} \) takes finitely many constant values \( u_j = (\cos(\beta_j), -\sin(\beta_j)) \) on the interval  \( I_j \) and so, for every $\epsilon>0$ and every $f \in C_c([-2+\epsilon,2-\epsilon])$,
\begin{equation}\label{eqeqeqeq0101010}
\lim_{n \to \infty} \frac{1}{\pi} \int f_\epsilon(x) \, \|T_{\triangle}(x,n,0) u^{(N)}(x)\|^{-2} \, dx = \int f_\epsilon(x) \cdot \rho^{(N)}(x) \, dx,    
\end{equation}
where \( \rho^{(N)}(x) = \rho_j(x) = \displaystyle\frac{\cos^2(\beta_j) \cdot \sqrt{4 - x^2}}{\pi(2 + \sin(2\beta_j)x)} \) for \( x \in I_j \). Namely, for each \( I_j \), let \( \{\varphi_\delta\} \subset C_c([-2,2]) \) be a sequence of continuous functions converging pointwise to \( \chi_{I_j} \) as \( \delta \to 0^+ \) such that ${\rm supp } \varphi_\delta \subset I_j$ and $0 \leq \varphi_\delta \leq 1$. We note that, for every \( \epsilon > 0 \) and every \( f \in C_c([-2+\epsilon, 2-\epsilon]) \), there exists $C_\epsilon>0$ such that 
\begin{eqnarray*}
&\,& \biggr| \int_{I_j} f_\epsilon(x) \, \|T_{\triangle}(x,n,0) u^{(N)}(x)\|^{-2} \, dx\\ 
&-& \int f_\epsilon(x) \varphi_\delta(x) \, \|T_{\triangle}(x,n,0) u^{(N)}(x)\|^{-2} \, dx \biggr|\\ 
&\leq& C_\epsilon \int |\varphi_\delta(x) - \chi_{I_j}(x)| \, dx \to 0
\end{eqnarray*}
as \( \delta \to 0^+ \), uniformly in \( n \) and $N$, since, by \eqref{eqemin01} and \eqref{eqemin02}, the map \( x \mapsto \|T_{\triangle}(x,n,0) u^{(N)}(x)\|^{-2} \) is uniformly bounded in \( n \)  and $N$ on compact subsets of \( (-2,2) \). We also note that 
\begin{eqnarray*}
\lim_{n \to \infty} &\frac{1}{\pi}& \int \varphi_\delta(x) f_\epsilon(x) \, \|T_{\triangle}(x,n,0) u^{(N)}(x)\|^{-2} \, dx\\ &=& \lim_{n \to \infty} \Bigg\{ \int_{I_j} f_\epsilon(x) \, \|T_{\triangle}(x,n,0) u^{(N)}(x)\|^{-2} \, dx \\
 &+& \biggr( \int f_\epsilon(x) \varphi_\delta(x) \, \|T_{\triangle}(x,n,0) u^{(N)}(x)\|^{-2} \, dx\\ 
&-& \int_{I_j} f_\epsilon(x) \, \|T_{\triangle}(x,n,0) u^{(N)}(x)\|^{-2} \, dx \biggr) \Bigg\},
\end{eqnarray*}
so
\begin{eqnarray*}
\lim_{n \to \infty} &\frac{1}{\pi}& \int \varphi_\delta(x) f_\epsilon(x) \, \|T_{\triangle}(x,n,0) u^{(N)}(x)\|^{-2} \, dx\\ 
&\leq& \lim_{n \to \infty}  \int_{I_j} f_\epsilon(x) \, \|T_{\triangle}(x,n,0) u^{(N)}(x)\|^{-2} \, dx\\ 
&+& C_\epsilon \int |\varphi_\delta(x) - \chi_{I_j}(x)| \, dx, 
\end{eqnarray*}
and
\begin{eqnarray}\label{bisbis}\nonumber
\lim_{n \to \infty} &\frac{1}{\pi}& \int \varphi_\delta(x) f_\epsilon(x) \, \|T_{\triangle}(x,n,0) u^{(N)}(x)\|^{-2} \, dx\\ \nonumber
&\geq& \lim_{n \to \infty}  \int_{I_j} f_\epsilon(x) \, \|T_{\triangle}(x,n,0) u^{(N)}(x)\|^{-2} \, dx\\ 
&-& C_\epsilon \int |\varphi_\delta(x) - \chi_{I_j}(x)| \, dx. 
\end{eqnarray}
Then, it follows from Theorem~\ref{CarmonaTHM}, the identity~\eqref{derivadalaplacia}, and the fact that \({\rm Ran}(\beta) \subset (-\pi/2, \pi/2)\),
\begin{eqnarray*}
\lim_{n \to \infty} \frac{1}{\pi} \int \varphi_\delta(x) f_\epsilon(x) \, 
\|T_{\triangle}(x,n,0) u^{(N)}(x)\|^{-2} \, dx 
= \int \varphi_\delta(x) f_\epsilon(x) \cdot \rho_j(x) \, dx;
\end{eqnarray*}
by letting \( \delta \to 0^+\), it follows from \eqref{bisbis} that 
\begin{eqnarray*}
\lim_{n \to \infty} \frac{1}{\pi} \int_{I_j} f_\epsilon(x) \, 
\|T_{\triangle}(x,n,0) u^{(N)}(x)\|^{-2} \, dx 
= \int_{I_j} f_\epsilon(x)  \cdot \rho_j(x) \, dx,
\end{eqnarray*}
where the last equality follows from dominated convergence, since for every \( x \in (-2,2) \) and every~$j$, 
\[
\rho_j(x)  = \frac{\cos^2(\beta_j) \cdot \sqrt{4 - x^2}}{\pi(2 + \sin(2\beta_j)x)} \leq \frac{\sqrt{4 - x^2}}{\pi(2 + x)} + \frac{\sqrt{4 - x^2}}{\pi(2 - x)} = \frac{\sqrt{|2 - x|}}{\pi \sqrt{|2 + x|}} + \frac{ \sqrt{|2 + x|}}{\pi \sqrt{|2 - x|}}.
\]

Now, for every $\epsilon>0$ and for every $x \in [-2+\epsilon,2-\epsilon]$, there exist $C_\eps',C_\eps^{"} >0$ such that 
\begin{eqnarray}\label{mainlemaeqeqeq010101} \nonumber
&\,&\left| \|T_{\triangle}(x,n,0) u_{\beta(x)}\|^{-2} - \|T_{\triangle}(x,n,0) u^{(N)}(x)\|^{-2} \right|\\ \nonumber &\leq&  C_\eps'    \left| \|T_{\triangle}(x,n,0) u_{\beta(x)}\|- \|T_{\triangle}(x,n,0) u^{(N)}(x)\|\right| \\ \nonumber &\leq&  C_\eps'     \|T_{\triangle}(x,n,0) \| \|u_{\beta(x)}-u^{(N)}(x)\| \\ &\leq&  C_\eps^{"}    \|u_{\beta(x)}-u^{(N)}(x)\| \to 0
\end{eqnarray}
as $N \to \infty$, uniformly in $n$. It is worth noting that above we have used the following facts: 1) the map \( x \mapsto \|T_{\triangle}(x,n,0) u_{\beta(x)}\| \) is strictly positive and uniformly bounded in \(n\) on compact subsets of \( (-2,2) \), and  the function \( f(x) = \displaystyle\frac{1}{x^2} \) is Lipschitz continuous on any compact subset contained in \( (0, \infty) \); 2)  \( \|T_{\triangle}(x,n,0)\| \) is  uniformly bounded in \(n\) on compact subsets of \( (-2,2) \). 

Finally, for every $\epsilon>0$ and every $f_\epsilon \in C_c([-2+\epsilon,2-\epsilon])$, by following the same argument as in \eqref{bisbis}, and by using  \eqref{eqeqeqeq0101010} and \eqref{mainlemaeqeqeq010101}, we have
\begin{eqnarray*}
 \lim_{n \to \infty} &\frac{1}{\pi}& \int f_\epsilon(x) \, \|T_{\triangle}(x,n,0) u_{\beta(x)}\|^{-2} \, dx\\  &=& \lim_{N \to \infty} \lim_{n \to \infty} \frac{1}{\pi} \int f_\epsilon(x) \, \|T_{\triangle}(x,n,0) u^{(N)}(x)\|^{-2} \, dx\\ &=& \lim_{N \to \infty} \int f_\epsilon(x) \cdot \rho^{(N)}(x) \, dx\\ &=& \int f_\epsilon(x)  \cdot \rho_{\beta(x)}(x) \, dx,    
\end{eqnarray*}
where the last equality follows again by dominated convergence, since for every \( x \in (-2,2) \) and every~$N$, 
\[
\rho_N(x)  \leq  \frac{\sqrt{|2 - x|}}{\pi \sqrt{|2 + x|}} + \frac{ \sqrt{|2 + x|}}{\pi \sqrt{|2 - x|}}.
\]
\end{proof}

As we shall see below, Theorem~\ref{app2} follows from the next result, where we establish a direct comparison between the spectral measure of the perturbed operator and that of the free Laplacian.

\begin{theorem}\label{maintheorem0202} Under the same assumptions as in the statement of Theorem~\ref{app2}, there exists a constant \( \gamma > 0 \) such that, for every non-negative function \( f \in C([-2,2]) \),
\[ \int_{-2}^2 f(x) \, d\mu(x) \leq \gamma \int_{-2}^2 f(x) \, d\mu^{\triangle}(x),\]
where (i) $\mu^\triangle = \mu_{\delta_1}^{\triangle_{+}}$ and $\mu = \mu^{+}_\kappa = \mu_{\delta_1}^+$ or (ii) $\mu^\triangle = \mu_{\delta_1}^{\triangle_{\pi/4}} + \mu_{\delta_1}^{\triangle_{-\pi/4}}$ and $\mu = \mu^{\pi/4}_\kappa = \mu_{\delta_1}^{{\pi/4}}$.
\end{theorem}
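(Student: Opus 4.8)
The plan is to express both $\mu$ and $\mu^{\triangle}$ as vague limits of the measures $d\mu_n(x) = \frac{1}{\pi}\|T^\kappa(x,n,0)u_\beta\|^{-2}\,dx$ and $d\mu_n^{\triangle}(x) = \frac{1}{\pi}\|T_{\triangle}(x,n,0)u_\beta\|^{-2}\,dx$ (via Theorem~\ref{CarmonaTHM} and Lemma~\ref{mainmainlema}, respectively), and then to obtain a \emph{pointwise} comparison $\|T^\kappa(x,n,0)u_\beta\|^{2} \geq \gamma^{-1}\|T_{\triangle}(x,n,0)u_\beta\|^{2}$, uniform in $n$ and $x$, which passes to the limit after smearing with $f$. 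The comparison of transfer matrices is exactly what Theorem~\ref{transferthm} provides: applying it first with the free Laplacian in the role of the unperturbed operator $H$ (so $V\equiv 0$, $\|V\|_\infty=0$, legitimate since $a>3>2$), equation~\eqref{AFI2} gives $T^\kappa(x,n,0)u_\beta - T_{\triangle}(x,n,0)u_\beta = T^\kappa(x,n,0)P(x)u_\beta + K_n(x)u_\beta$ with $\|P\|\le\eta|\kappa|$ and $\|K_n\|\le\eta|\kappa|a^{(3-\tau)|n|}\to 0$.

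First I would fix $\epsilon\in(0,1)$ and work on $[-2+\epsilon,2-\epsilon]$, where by \eqref{eqemin01}–\eqref{eqemin02} the quantity $\|T_{\triangle}(x,n,0)u_\beta\|$ is bounded above and below uniformly in $n$; there the desired inequality is elementary (both measures have bounded densities bounded away from $0$ and $\infty$), so the content is entirely near the edges $x=\pm 2$. Near the edges I would use that $\|T^\kappa(x,n,0)u_\beta\| \geq \|T_{\triangle}(x,n,0)u_\beta\| - \|T^\kappa(x,n,0)\|\,\|P(x)\| - \|K_n(x)\|$, and estimate $\|T^\kappa(x,n,0)\| \le \gamma\|T_{\triangle}(x,n,0)\| + r_n$ via Corollary~\ref{maincoror0000}; combined with the a~priori bound $\|T_{\triangle}(x,n,0)\|^2 \lesssim (4-x^2)^{-1}$ and the fact that $u_\beta$ is chosen (for $\beta=\pi/4$ or the Dirichlet case) so that $\|T_{\triangle}(x,n,0)u_\beta\|$ does \emph{not} decay like $(4-x^2)^{1/2}$ but stays comparable to a nonvanishing quantity (this is precisely the content of Lemma~\ref{teclemmamainthm}: $\|D^nP^{-1}u_\beta\|^2 = \frac{1+\cos\theta\sin2\beta}{2\sin^2\theta}$, which blows up, rather than vanishes, at the relevant edge), one gets $\|T^\kappa u_\beta\| \ge c\|T_{\triangle}u_\beta\|(1 - \eta|\kappa|\gamma) - (\text{small})$, and choosing $\epsilon$ (hence $\kappa$) small enough makes the bracket positive. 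Here the two cases of the theorem differ only in which edge behaves well: for Dirichlet both edges are fine (hence Lipschitz later), while for $\beta=\pi/4$ one edge is controlled by the $\beta=\pi/4$ piece and the other by the $\beta=-\pi/4$ piece, which is why $\mu^{\triangle}$ is taken to be the \emph{sum} of the two free spectral measures.

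The passage to the limit requires care because the two sequences $\mu_n$ and $\mu_n^{\triangle}$ converge only vaguely (against $C_c((-2,2))$), not against $C([-2,2])$, so I cannot simply integrate $f\in C([-2,2])$ against the limit relation. The remedy is the standard truncation-plus-tail argument: for $f\ge 0$ continuous on $[-2,2]$ and $\epsilon>0$, split $f = f\chi_{[-2+\epsilon,2-\epsilon]} + (\text{edge part})$, apply the vague convergence and the pointwise matrix comparison on the compact middle to get $\int_{-2+\epsilon}^{2-\epsilon} f\,d\mu \le \gamma\int_{-2+\epsilon}^{2-\epsilon} f\,d\mu^{\triangle}$, and control the edge contribution to $\mu$ by noting $\mu([2-\epsilon,2]) \le \liminf_n \mu_n([2-\epsilon,2])$ (lower semicontinuity of mass on closed sets under vague convergence), then bounding $\mu_n$ on the edge strip again via Corollary~\ref{maincoror0000} and the explicit free estimate, which shows the edge mass of $\mu$ is dominated by a constant times the edge mass of $\mu^{\triangle}$, uniformly in $\epsilon$. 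Letting $\epsilon\downarrow 0$ and using dominated convergence (the dominating function $\frac{\sqrt{|2-x|}}{\pi\sqrt{|2+x|}} + \frac{\sqrt{|2+x|}}{\pi\sqrt{|2-x|}}$ from Lemma~\ref{mainmainlema} is integrable) yields the global inequality with a single constant $\gamma$.

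\textbf{Main obstacle.} The genuinely delicate point is the uniform-in-$x$ lower bound for $\|T^\kappa(x,n,0)u_\beta\|$ all the way up to $x=\pm 2$: the naive bound $\|T^\kappa u_\beta\|\ge\|T^\kappa\|^{-1}$ is useless (it gives a density $\lesssim(4-x^2)^{-1}$, not integrable), and one must instead exploit that the perturbative error term $T^\kappa(x,n,0)P(x)u_\beta$ in \eqref{AFI2} is \emph{aligned with} $T^\kappa(x,n,0)$ applied to a vector close to $u_\beta$ — so it cannot produce destructive cancellation with the main term once $\|P\|=O(|\kappa|)$ is small — while the genuinely error term $K_n(x)u_\beta$ decays exponentially in $n$ and is therefore negligible in the vague limit. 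Making this ``no cancellation'' heuristic rigorous uniformly up to the spectral edge, and simultaneously tracking that the constant $\gamma$ does not degenerate as $x\to\pm2$, is the crux; it is where the choice $a>3$, the smallness of $\kappa$ (depending on $b$), and the special structure of $u_{\pi/4}$ and $u_{0}$ (Lemma~\ref{teclemmamainthm}) all get used together.
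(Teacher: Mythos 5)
There is a genuine gap, and it sits exactly at the point you yourself flag as the crux. Your proposed edge estimate is
\[
\|T^\kappa(x,n,0)u_\beta\| \;\geq\; \|T_{\triangle}(x,n,0)u_\beta\| - \|T^\kappa(x,n,0)\|\,\|P(x)\| - \|K_n(x)\|,
\]
with $\|T^\kappa(x,n,0)\|$ controlled through Corollary~\ref{maincoror0000} and the bound $\|T_{\triangle}(x,n,0)\|\lesssim (4-x^2)^{-1/2}$. This does not close: the subtracted term is of order $\eta|\kappa|(4-x^2)^{-1/2}$, which for \emph{any fixed} $\kappa\neq 0$ blows up as $x\to\pm2$ faster than $\|T_{\triangle}(x,n,0)u_\beta\|$ is guaranteed to grow (for $u_0=(1,0)$ the only uniform pointwise lower bound available from Lemma~\ref{teclemmamainthm} is a constant, $\|T_\triangle u_0\|\geq \|P^{-1}\|^{-1}\|D^nP^{-1}u_0\|\geq 1/\sqrt2$). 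Smallness of $\kappa$ cannot compensate for a divergence in $x$, so the bracket you want to make positive becomes negative near the edge. The same defect reappears in your tail control: bounding $\mu_n$ on $[2-\epsilon,2]$ via $\|T^\kappa u\|\geq\|T^\kappa\|^{-1}\gtrsim\sqrt{4-x^2}$ gives a density $\lesssim(4-x^2)^{-1}$, which is not integrable on the edge strip. (Minor additional slip: under vague convergence the inequality for compact sets goes the other way, $\mu(K)\geq\limsup_n\mu_n(K)$; the direction you invoke holds for open sets.)

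The paper closes this gap by never letting the full matrix norm $\|T^\kappa\|$ or $\|T_\triangle\|$ enter the error term. In the Dirichlet case it uses identity~\eqref{AFI1} (with $Q$ on the \emph{right} of $T_\triangle$) together with the algebraic fact $(0,1)^{\mathrm T}=-A^{-1}(x)(1,0)^{\mathrm T}$, so that
\[
T_\triangle(x,n,0)Q(x)u_0 \;=\; q_{11}(x)\,A^n(x)u_0 \;-\; q_{21}(x)\,A^{-1}(x)A^n(x)u_0,
\]
whence $\|T_\triangle Q u_0\|\leq 4\|Q\|\,\|T_\triangle u_0\|$ --- the error is controlled by the \emph{vector} norm $\|T_\triangle u_0\|$ itself, uniformly up to $x=\pm2$, and the exponentially small remainder $r_n$ is killed by $r_n\|T_\triangle\|\to0$ ($\tau>4$). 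In the $\pi/4$ case the perturbed initial vector $(I-Q(x))u_{\pi/4}$ is renormalized into an $x$-dependent boundary vector $u_{\beta(x)}$ with $\beta\in C([-2,2])$, ${\rm Ran}(\beta)\subset(-\pi/2,\pi/2)$, and Lemma~\ref{mainmainlema} is then needed precisely to identify $\lim_n\frac1\pi\|T_\triangle(x,n,0)u_{\beta(x)}\|^{-2}\,dx$ as $\rho_{\beta(x)}(x)\,dx$, which is dominated by the density of $\mu_{\delta_1}^{\triangle_{\pi/4}}+\mu_{\delta_1}^{\triangle_{-\pi/4}}$. You cite Lemma~\ref{mainmainlema} but then compare against the fixed vector $u_{\pi/4}$ rather than against $u_{\beta(x)}$, so the lemma is never actually put to use; making your ``no destructive cancellation'' heuristic rigorous requires one of these two structural devices, and your write-up supplies neither.
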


\begin{proof} We fix $|\kappa| < \dfrac{1}{100 \eta}$, where $\eta > 0$  is given in the statement of Theorem \ref{transferthm} (here, we take $V \equiv 0$) and depends only on $b$.
Let $x \in [-2,2]$ and let  
\[
A(x) = 
\begin{pmatrix}
x & -1 \\
1 & 0
\end{pmatrix},  \quad Q(x) = 
\begin{pmatrix}
q_{11}(x) & q_{12}(x) \\
q_{21}(x) & q_{22}(x)
\end{pmatrix},
\]
where the matrix $Q(x)$ is given by Theorem \ref{transferthm}. 

First, we prove $(i)$. Set $u_0=(1,0)$. Then, for every $x \in [-2,2]$ and every $n \geq 1$,  
\begin{eqnarray*}
T_\triangle(x, n, 0)\, Q(x) u_0 &=&   T_\triangle(x, n, 0)
\begin{pmatrix}
q_{11}(x) \\
q_{21}(x)
\end{pmatrix}\\
&=& q_{11}(x)\, A^n(x) 
\begin{pmatrix}
1 \\
0
\end{pmatrix}
+ q_{21}(x)\, A^n(x) 
\begin{pmatrix}
0 \\
1
\end{pmatrix} \\
&=& q_{11}(x)\, A^n(x) 
\begin{pmatrix}
1 \\
0
\end{pmatrix}
- q_{21}(x)\, A^{-1}(x) A^n(x) 
\begin{pmatrix}
1 \\
0
\end{pmatrix}.
\end{eqnarray*}
Using that, for each $x \in [-2,2]$, $\|Q(x)\| \leq \eta |\kappa| < \displaystyle\frac{1}{100}$, we have that, for every $x \in [-2,2]$ and every $n \geq 1$,  
\[
\|T_\triangle(x, n, 0)Q(x)u_0\| \leq  \frac{4}{100} \|T_\triangle(x, n, 0)u_0\|.
\]
Hence, by Theorem \ref{transferthm},  
\begin{eqnarray*}
\|T_\triangle(x, n, 0)u_0\| &\leq& \|T^\kappa(x, n, 0)u_0 - T_\triangle(x, n, 0)u_0\| + \|T^\kappa(x, n, 0)u_0\|\\
&\leq&  \|T_\triangle(x, n, 0)Q(x)u_0\|  + \|T^\kappa(x, n, 0)u_0\| + r_n\\
&\leq& \frac{4}{100} \|T_\triangle(x, n, 0)u_0\|   + \|T^\kappa(x, n, 0)u_0\|  + r_n.   
\end{eqnarray*}
Thus, for every $x \in [-2,2]$ and every $n \geq 1$, 
\begin{eqnarray*}
\|T_\triangle(x, n, 0)u_0\| &\leq& 2\|T^\kappa(x, n, 0)u_0\| + 2r_n.    
\end{eqnarray*}
Note that, by Lemma \ref{02teclemma} and due to the fact that transfer matrices are unimodular,  
\begin{eqnarray*}
\lim_{n \to \infty} \frac{r_n}{\|T_\triangle(x, n, 0)u_0\|}  &\leq& \lim_{n \to \infty} r_n \|T_\triangle^{-1}(x, n, 0)\|\\ 
&=& \lim_{n \to \infty} r_n\|T_\triangle(x, n, 0)\| = 0    
\end{eqnarray*}
uniformly in $x \in [-2,2]$, since we assumed that $\tau > 4$.  Thus, there exists $n_0>1$ such that, for every $x \in [-2,2]$ and for all $n>n_0$,
\begin{eqnarray}\label{eq0909090} \nonumber
\|T^\kappa(x, n, 0)u_0\|^{-2} &\leq&  4\|T_\triangle(x, n, 0)u_0\|^{-2}\left( 1 - \frac{2r_n}{\|T_\triangle(x, n, 0)u_0\|}   \right)^{-2}\\ &\leq& 8\|T_\triangle(x, n, 0)u_0\|^{-2}.
\end{eqnarray}

Finally, let \( f \in C([-2,2]) \) be non-negative function and consider a continuous extension of \(f\) to  $\mathbb{R}$, which, for simplicity, we will also denote by \(f\). Let $\{\varphi_\epsilon\} \subset C_c([-2,2])$ be a sequence of positive continuous functions that converges pointwise to $\chi_{(-2,2)}$ as $\epsilon \to 0^+$, with $\varphi_\epsilon \equiv 1$ on $[-2+\epsilon, 2-\epsilon]$, $\varphi_\epsilon \equiv 0$ on $[-2,-2+\frac{\epsilon}{2}] \cup [2-\frac{\epsilon}{2},2]$, and $0\leq \varphi_\epsilon \leq 1$. Then, by Theorem \ref{CarmonaTHM} and \eqref{eq0909090}, there exists $\gamma>0$ such that,    
\begin{eqnarray*}\label{eqcarmona} \nonumber
\int f(x) \varphi_\epsilon(x) \, d\mu(x) &=& \lim_{n \to \infty} \frac{1}{\pi} \int  f(x) \varphi_\epsilon(x)  \left\| T^\kappa(x, n, 0)u_0 \right\|^{-2} \, dx\\  \nonumber
&\leq& \gamma  \lim_{n \to \infty} \frac{1}{\pi} \int f(x) \varphi_\epsilon(x)  \left\| T_\triangle(x, n, 0)u_0 \right\|^{-2} \, dx \\ 
&=& \gamma \int f(x) \varphi_\epsilon(x) \, d\mu^{\triangle}(x);
\end{eqnarray*}
by dominated convergence, 
\begin{eqnarray*}
\int_{-2}^2 f(x) \, d\mu(x) \leq \gamma \int_{-2}^2 f(x)  \, d\mu^{\triangle}(x).
\end{eqnarray*}
We note that it is straightforward to check that the norm of the transfer matrix of the free Laplacian grows at most linearly with \( n \), uniformly in \( x \). Therefore, the assumption that \(\tau > 4\) is not strictly necessary here, but since we are not concerned with an optimal \(\tau\), this simplifies the presentation of the proof.

Now, we present a proof of $(ii)$. First, we write, for every $x \in [-2,2]$ and every $n \geq 1$, 
\begin{eqnarray}\label{eq1010101010lalalala} \nonumber
&\,&T_\triangle(x, n, 0)\, (I-Q(x)) u_{\pi/4}\\ \nonumber  &=& ||v(x)|| T_\triangle(x, n, 0)
\begin{pmatrix}
(\sqrt{2}/2 - \varepsilon_1(x))/||v(x)|| \\
(-\sqrt{2}/2 - \varepsilon_2(x))/||v(x)||
\end{pmatrix}\\ \nonumber &=&  ||v(x)|| T_\triangle(x, n, 0)
\begin{pmatrix}
\cos(\beta(x)) \\
-\sin(\beta(x))
\end{pmatrix}\\ &=& ||v(x)|| T_\triangle(x, n, 0) u_{\beta(x)},
\end{eqnarray}
where $\beta \in C([-2,2])$ and \({\rm Ran}(\beta) \subset (-\pi/2, \pi/2)\), since $Q \in C([-2,2])$ and, for each $j=1,2$ and for each $x \in [-2,2]$, $|\varepsilon_j(x)|  \leq  2||Q(x)||<\displaystyle\frac{1}{50}$; here,
\[
v(x) = \begin{pmatrix}
\sqrt{2}/2 - \varepsilon_1(x) \\
-\sqrt{2}/2 - \varepsilon_2(x)
\end{pmatrix}
\]
and so, for every $x \in [-2,2]$, $||v(x)|| \geq \displaystyle\frac{1}{2}$. 

Then, by Theorem \ref{transferthm}  and \eqref{eq1010101010lalalala}, for each $x \in [-2,2]$ and each $n \geq 1$, 
\begin{eqnarray*}
||T^\kappa (x, n, 0)u_{\pi/4}|| &\geq& ||T_\triangle(x, n, 0)\, (I-Q(x)) u_{\pi/4}|| - r_n\\ &\geq& \frac{1}{2} \| T_\triangle(x, n, 0) u_{\beta(x)} \|   - r_n 
\\ &=& \frac{1}{2} \| T_\triangle(x, n, 0) u_{\beta(x)} \|\biggr(1   - \frac{2 r_n}{\| T_\triangle(x, n, 0) u_{\beta(x)} \|} \biggr).
\end{eqnarray*}
Note that, again, by Lemma \ref{02teclemma},  
\begin{eqnarray*}
\lim_{n \to \infty} \frac{ r_n}{\| T_\triangle(x, n, 0) u_{\beta(x)} \|}  &\leq& \lim_{n \to \infty} r_n \|T_\triangle^{-1}(x, n, 0)\|\\ 
&=& \lim_{n \to \infty} r_n \|T_\triangle(x, n, 0)\| = 0    
\end{eqnarray*}
uniformly in $x \in [-2,2]$, since we assumed that $\tau > 4$.
Thus, there exists $n_0>1$ such that, for every $x \in [-2,2]$ and for all $n>n_0$,
\begin{eqnarray}\label{eq01010101678}
||T^\kappa (x, n, 0)u_{\pi/4}||^{-2} &\leq&  8 \| T_\triangle(x, n, 0) u_{\beta(x)} \|^{-2}.
\end{eqnarray}

Finally, let \( f \in C([-2,2]) \) be non-negative function and consider a continuous extension of \(f\) to $\mathbb{R}$, which we will also denote by \(f\). Again, let $\{\varphi_\epsilon\} \subset C_c([-2,2])$ be a sequence of positive continuous functions that converges pointwise to $\chi_{(-2,2)}$ as $\epsilon \to 0^+$, with $\varphi_\epsilon \equiv 0$ on $[-2,-2+\displaystyle\frac{\epsilon}{2}] \cup [2-\frac{\epsilon}{2},2]$,  $\varphi_\epsilon \equiv 1$ on $[-2+\epsilon,2-\epsilon]$, and $0\leq \varphi_\epsilon \leq 1$. Then, by Theorem \ref{CarmonaTHM}, \eqref{eq01010101678} and Lemma \ref{mainmainlema}, there exists $\gamma>0$ such that   
\begin{eqnarray}\label{eqeqeqeqeq010101016} \nonumber
\int f(x) \varphi_\epsilon(x)  \, d\mu(x) &=& \lim_{n \to \infty} \frac{1}{\pi} \int  f(x) \varphi_\epsilon(x) \left\| T^\kappa(x, n, 0)u_{\pi/4} \right\|^{-2} \, dx\\ \nonumber
&\leq& \gamma  \lim_{n \to \infty} \frac{1}{\pi} \int f(x) \varphi_\epsilon(x) \left\| T_\triangle(x, n, 0)u_\beta(x) \right\|^{-2} \, dx\\  
&=&  \gamma \int f(x) \varphi_\epsilon(x) \, \rho_{\beta(x)}(x) \, dx \leq  \gamma \int f(x) \varphi_\epsilon(x) \, d\mu^{\triangle}(x);
\end{eqnarray}
by dominated convergence, 
\begin{eqnarray*}
\int_{-2}^2 f(x) \, d\mu(x) \leq \gamma \int_{-2}^2 f(x)  \, d\mu^{\triangle}(x).
\end{eqnarray*}
Above in \eqref{eqeqeqeqeq010101016}, we have used that for every \( x \in (-2,2) \), one has
\[
\rho_{\beta(x)}(x) \leq \frac{\sqrt{|2 - x|}}{\pi \sqrt{|2 + x|}} + \frac{ \sqrt{|2 + x|}}{\pi \sqrt{|2 - x|}} = \frac{d\mu^{\triangle}(x)}{dx}
\]
(recall \eqref{derivadalaplacia}). This concludes the proof of the theorem.
\end{proof}

\subsubsection*{Proof of Main Theorem}
\ 

We proceed with the proof of Theorem \ref{app2}. We first prove the items $(ii)$ and $(iii)$. Next, we present a proof for \( H_{\pi/4}^\kappa \); since the proof for \( H_{+}^\kappa \) is identical, it is omitted. 

\ 

\noindent $(ii)$ As, for all $f \in  C([-2,2])$, 
\begin{equation}\label{eqlm0101}
    \mu_{\psi}^{\pi/4} = \mu_{f(H^\kappa)\delta_1}^{\pi/4} \leq  \left\| f\right\|_{\infty}^2  \mu_{\delta_1}^{\pi/4}, 
\end{equation}
it suffices to prove the result for $\mu = \mu_\kappa^{\pi/4} =  \mu_{\delta_1}^{\pi/4}$.

First, recall that  
\[
d\mu^{\triangle}(x) =  \frac{\sqrt{|2 - x|}}{\pi \sqrt{|2 + x|}} \cdot \chi_{(-2,2)}(x) \, dx + \frac{ \sqrt{|2 + x|}}{\pi \sqrt{|2 - x|}} \cdot \chi_{(-2,2)}(x) \, dx 
\]  
(see \eqref{derivadalaplacia}). Thus, for every interval \( I \) with \( |I| < 1 \),  
\begin{equation}\label{eq008}
\mu^{\triangle}(I) \leq 4 |I|^{1/2}.    
\end{equation}
Moreover, \( \mu^{\triangle} \) has a singularity of type \( \beta = \frac{1}{2} \).

Now, for every Borel measurable set \( \Lambda \subset [-2,2] \), we have \( g \equiv \chi_\Lambda \in {\rm L}_+^1(\mu^{\triangle}) \). Thus, we can choose a sequence \( (f_n)_{n \geq 1} \subset {\rm L}_+^1(\mu^{\triangle}) \cap C([-2,2]) \) such that
\[
\lim_{n \to \infty} \|f_n - g\|_{{\rm L}^1(\mu^{\triangle})} = 0.
\]
Then, by Theorem 4.9 in \cite{Brezis}, there is a subsequence \( (f_{n_k}) \) and a function \( h \in {\rm L}_+^1(\mu^{\triangle}) \subset {\rm L}_+^1(\mu) \) (this inclusion holds by Theorem \ref{maintheorem0202}) such that \( \displaystyle\lim_{k \to \infty} f_{n_k}(x) = g(x) \) for almost every \( x \in \mathbb{R} \), and for every \( k \geq 1 \), \( f_{n_k}(x) \leq h(x) \) for almost every \( x \in \mathbb{R} \). Thus, by Theorem \ref{maintheorem0202} and by dominated convergence, for every Borel set $\Lambda \subset \mathbb{R}$,

\begin{eqnarray}\label{importantinequa}  \nonumber \mu (\Lambda) &=&  \lim_{k \to \infty} \int_{-2}^2 f_{n_k}(x) \, d\mu(x)\\ &\leq& \gamma \lim_{k \to \infty} \int_{-2}^2 f_{n_k}(x) \, d\mu^{\triangle}(x) =  \gamma \mu^{\triangle}(\Lambda),   
\end{eqnarray}

Thus, by \eqref{eq008}, the item $(ii)$ is proved. 

\ 

\noindent $(iii)$ Let \( f \in C([-2,2]) \) and let \( \mu_{\psi}^{\pi/4} = \mu_{f(H^\kappa)\delta_1}^{\pi/4} \). Since \( \mu^{\triangle} \) exhibits a singularity of type \( \beta = \frac{1}{2} \), it follows from Theorem~\ref{maintheorem0202} that \( \mu_{\delta_1}^{\pi/4} \) exhibits a weak-singularity of the same type. Therefore, by~\eqref{eqlm0101}, the measure \( \mu_{\psi}^{\pi/4} \) also exhibits a weak-singularity of type \( \beta = \frac{1}{2} \).
Thus, item $(iii)$ is a consequence of Theorem \ref{app1}, since by the Spectral Theorem we have
\[
\frac{1}{t}\int_0^t |\langle \psi, e^{-isH^\kappa} \psi \rangle|^2 \, ds = \frac{1}{t} \int_0^t \left| \int_{\mathbb{R}} e^{-isx} \, d\mu_\psi(x) \right|^2 ds.
\]

\ 

 \noindent $(i)$ Note that the persistence of the absolutely continuous spectrum follows from Corollary~\ref{maincor0000} for $V \equiv 0$. Note that the canonical spectral
measure is purely absolutely continuous by~\eqref{importantinequa}. Since $\delta_1$ is a cyclic vector, this implies that the operator has purely absolutely continuous spectrum.
This concludes the proof of the theorem.


\section{Proofs of technical ingredients}\label{sec3}

\subsection{Proof of the Asymptotic formula I}

\ 

Let \(\{F_j\}_{j=1}^n\) be a sequence of square matrices. In this section, we adopt the following convention for the product:
\[
\prod_{j=0}^n F_j = F_n F_{n-1} \cdots F_0.
\]

In the proof of Theorem \ref{transferthm}, we will use the known telescoping formula for matrices given below. For the reader's convenience, we provide a proof in Appendix \ref{appendix}.

\begin{lemma}\label{01teclemma} For any $n \geq 1$,
\[
\prod_{j=0}^n F_j - \prod_{j=0}^n G_j = \sum_{k=0}^n \left( \prod_{j=k+1}^n F_j \cdot (F_k - G_k) \cdot \prod_{j=0}^{k-1} G_j  \right).
\]
\end{lemma}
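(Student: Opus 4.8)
The statement to prove is Lemma~\ref{01teclemma}, the telescoping identity for products of matrices:
\[
\prod_{j=0}^n F_j - \prod_{j=0}^n G_j = \sum_{k=0}^n \left( \prod_{j=k+1}^n F_j \cdot (F_k - G_k) \cdot \prod_{j=0}^{k-1} G_j \right),
\]
with the convention $\prod_{j=0}^n F_j = F_n F_{n-1} \cdots F_0$, and where empty products (here $\prod_{j=n+1}^n F_j$ and $\prod_{j=0}^{-1} G_j$) are understood to be the identity matrix $I$.

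\textbf{Plan of the proof.} The natural approach is induction on $n$, exploiting the non-commutative telescoping structure. The key algebraic observation is that the difference of two ordered products can be broken one factor at a time: by inserting and subtracting a hybrid product, one peels off either the leftmost factor $F_n$ versus $G_n$, or the rightmost factor $F_0$ versus $G_0$. I would peel from the left, writing, for $n \geq 1$,
\[
\prod_{j=0}^n F_j - \prod_{j=0}^n G_j = F_n \prod_{j=0}^{n-1} F_j - G_n \prod_{j=0}^{n-1} G_j = F_n\left(\prod_{j=0}^{n-1} F_j - \prod_{j=0}^{n-1} G_j\right) + (F_n - G_n)\prod_{j=0}^{n-1} G_j.
\]
The second term on the right is exactly the $k=n$ summand of the claimed formula (since $\prod_{j=n+1}^n F_j = I$). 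For the first term, I would apply the induction hypothesis to $\prod_{j=0}^{n-1} F_j - \prod_{j=0}^{n-1} G_j$, obtaining $\sum_{k=0}^{n-1} \left(\prod_{j=k+1}^{n-1} F_j \cdot (F_k - G_k) \cdot \prod_{j=0}^{k-1} G_j\right)$; multiplying on the left by $F_n$ turns each $\prod_{j=k+1}^{n-1} F_j$ into $\prod_{j=k+1}^{n} F_j$, so this contributes precisely the $k=0,\dots,n-1$ summands. Adding the $k=n$ term recovers the full sum.

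\textbf{Base case and bookkeeping.} For $n=1$ (or even $n=0$, which reads $F_0 - G_0 = F_0 - G_0$ trivially), the identity is checked directly: $F_1 F_0 - G_1 G_0 = F_1(F_0 - G_0) + (F_1 - G_1)G_0$, which matches the sum over $k \in \{0,1\}$ with the empty-product conventions. The only real care needed is the consistent handling of empty products at the endpoints $k=0$ and $k=n$, and making sure the index shift $\prod_{j=k+1}^{n-1} F_j \mapsto F_n \prod_{j=k+1}^{n-1} F_j = \prod_{j=k+1}^{n} F_j$ is valid also when $k = n-1$ (where the left side is $I$ and the right side is $F_n$). These are routine but worth stating explicitly.

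\textbf{Expected main obstacle.} There is essentially no deep obstacle here; the lemma is a standard non-commutative telescoping identity and the induction is short. The only thing to be careful about is notational: the paper's product convention is \emph{reverse-ordered} ($\prod_{j=0}^n F_j = F_n \cdots F_0$), so one must verify that factoring out the \emph{last-written} factor $F_n$ (i.e.\ the one with the largest index) from the left is the correct move, and that the resulting ranges in the inductive step line up. An alternative, equally clean route is a direct (non-inductive) verification: expand the right-hand side, note that the $k$-th summand equals $\left(\prod_{j=k}^n F_j\right)\left(\prod_{j=0}^{k-1} G_j\right) - \left(\prod_{j=k+1}^n F_j\right)\left(\prod_{j=0}^{k} G_j\right)$, and observe that the sum over $k=0,\dots,n$ telescopes, leaving the $k=0$ lower term $\prod_{j=0}^n F_j$ minus the $k=n$ upper term $\prod_{j=0}^n G_j$. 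I would likely present this telescoping-collapse argument, as it is the most transparent.
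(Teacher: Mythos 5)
Your inductive argument---peeling off the highest-index factor from the left via $F_n\bigl(\prod_{j=0}^{n-1}F_j-\prod_{j=0}^{n-1}G_j\bigr)+(F_n-G_n)\prod_{j=0}^{n-1}G_j$ and applying the induction hypothesis---is exactly the proof given in the paper's appendix, and it is correct. The alternative direct telescoping-collapse you sketch at the end is also valid, but it is not needed to match the paper's reasoning.
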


We proceed with the proof of Theorem \ref{transferthm}. Let $n \geq 1$ and let \( T^\kappa(x, n, 0) \) be the transfer matrix of the operator $H^\kappa$. We write
\[
T^\kappa(x, n, 0) = A_n A_{n-1} \cdots A_1 A_0 = \prod_{j=0}^n A_j.
\]
The transfer matrix of the operator $H$ is
\[
T(x, n, 0) = B_n B_{n-1} \cdots B_1 B_0  = \prod_{j=0}^n B_j.
\]

Also note that, for every $k \geq 1$,
\[
B_k-A_k =
\begin{pmatrix}
\kappa b(k) & 0 \\
0 & 0
\end{pmatrix}
\]
and $B_0-A_0=0$.

Note that, by Lemma \ref{01teclemma},
\begin{eqnarray*} 
&\,&\prod_{j=0}^n  B_j -  \prod_{j=0}^n A_j \\ 
&=&  \sum_{k=0}^n \left( \prod_{j=k+1}^n B_j  \cdot (B_k - A_k) \cdot \prod_{j=0}^{k-1}  A_j \right)\\ 
&=&  \sum_{k=0}^n \left( \biggr( \prod_{j=k+1}^n B_j \biggr)   \biggr(\prod_{j=0}^{k} B_j \biggr) \biggr(\prod_{j=0}^{k} B_j \biggr)^{-1}  \cdot (B_k - A_k) \cdot \prod_{j=0}^{k-1}  A_j \right) \\ 
&=&  \sum_{k=0}^n \left(\biggr( \prod_{j=0}^n B_j \biggr)   \biggr(\prod_{j=0}^{k} B_j \biggr)^{-1}  \cdot (B_k - A_k) \cdot \prod_{j=0}^{k-1}  A_j \right) \\ 
&=&  \biggr( \prod_{j=0}^n B_j \biggr) \sum_{k=0}^n \left(\biggr(\prod_{j=0}^{k} B_j \biggr)^{-1}  \cdot (B_k - A_k) \cdot \prod_{j=0}^{k-1}  A_j \right). 
\end{eqnarray*}

By Lemma \ref{02teclemma}, there exists a $\gamma'>0$ that does not depend on $x \in \sigma(H^\kappa)$, such that 

\begin{eqnarray*}
\nonumber 
&\,& \sum_{k=0}^n \biggr|\biggr| \left(    \biggr(\prod_{j=0}^{k} B_j \biggr)^{-1}  \cdot (B_k - A_k) \cdot \prod_{j=0}^{k-1}  A_j \right)  \biggr|\biggr| 
\\ \nonumber&\leq&  \sum_{k=0}^n \biggr|\biggr|\biggr(\prod_{j=0}^{k} B_j \biggr)^{-1}\biggr|\biggr| |\kappa| |b(k)| \prod_{j=0}^{k-1} ||A_j||\\ 
\nonumber  
&=&  \sum_{k=0}^n \biggr|\biggr|\biggr(\prod_{j=0}^{k} B_j \biggr)\biggr|\biggr| |\kappa| |b(k)| \prod_{j=0}^{k-1} ||A_j||   \\ 
\nonumber 
&\leq&  \sum_{k=0}^n  |\kappa| |b(k)| \prod_{j=0}^{k-1} ||A_j||   \prod_{j=0}^{k} ||B_j|| \\ 
\nonumber  
&\leq& |\kappa| \gamma' \sum_{k=0}^n  |b(k)| (a+2||V||_\infty)^{2k}   \\ 
&\leq& \gamma' \gamma |\kappa| \sum_{k=0}^{\infty} (a+2||V||_\infty)^{(2-\tau)k}.
\end{eqnarray*} 
We used above that, if \(\det(B) = 1\), then \(\|B\| = \|B^{-1}\|\). Hence, we can rewrite
\begin{eqnarray}\label{maineq0202} 
\prod_{j=0}^n  B_j -  \prod_{j=0}^n A_j  
&=&  \biggr( \prod_{j=0}^n B_j \biggr) (Q-R_n), 
\end{eqnarray}
where
\[
Q = \sum_{k=0}^\infty \left(\biggr(\prod_{j=0}^{k} B_j \biggr)^{-1}  \cdot (B_k - A_k) \cdot \prod_{j=0}^{k-1}  A_j \right),
\]
\[
R_n = \sum_{k=n+1}^\infty \left(\biggr(\prod_{j=0}^{k} B_j \biggr)^{-1}  \cdot (B_k - A_k) \cdot \prod_{j=0}^{k-1}  A_j \right),
\]
with $||R_n|| \leq \eta (a+2||V||_\infty)^{(2-\tau)n}$  and $||Q|| \leq  \eta |\kappa|$ for some $\eta>0$, which does not depend on $x \in \sigma(H^\kappa)$.

As, by Lemma \ref{02teclemma}, there exists an $\eta'>0$ such that 
\[
\biggr|\biggr| \biggr(\prod_{j=0}^n B_j \biggr) R_n \biggr|\biggr|  \leq \eta' (a+2||V||_\infty)^{(3-\tau)n},
\]
one has 
\begin{eqnarray*}
T(x, n, 0) - T^\kappa(x, n, 0)  = T(x, n, 0)Q + R_n^{'},
\end{eqnarray*}
with $||R_n^{'}|| \leq \eta' (a+2||V||_\infty)^{(3-\tau)n}$. 

Note that we can apply the same above argument to
\[
\prod_{j=0}^n  A_j -  \prod_{j=0}^n B_j,
\]
and obtain an identity similar to \eqref{maineq0202}. Hence, the result follows for $n \geq 1$. The proof for \( n \geq -1 \) is identical and is therefore omitted.

Finally, note that \( Q(x) = \sum_{k=0}^\infty q_k(x) \), where  
\[
q_k(x) =  \left( \prod_{j=0}^{k} B_j(x) \right)^{-1} 
\begin{pmatrix}
\kappa b(k) & 0 \\
0 & 0
\end{pmatrix} \prod_{j=0}^{k-1} A_j(x)
\]
is a polynomial in the variable \( x \). Since the convergence \( Q(x) = \displaystyle\lim_{n \to \infty} \sum_{k=0}^n q_k(x) \) is uniform in \( x \in \sigma(H^\kappa) \), it follows that \( Q \) is continuous on \( \sigma(H^\kappa) \).  This concludes the proof of the theorem. 
 \hfill \qedsymbol


\subsection{Proof of the Strichartz-type inequality}
\ 

The main ingredient of this proof is a refined analysis of the singularity using Fourier analysis and Cauchy's theory. Indeed, the core of the proof consists of two lemmas (Lemmas \ref{1teclemma} and \ref{mainlemma}), which are stated and proved below. We note that the following discussion sharpens some of the arguments presented by the authors in \cite{Aloisio}.

\begin{lemma}\label{1teclemma} Let $g \in {\mathrm L}^1(\mathbb{R})$. For each $t>0$, one has
\begin{eqnarray}\nonumber \int_{\mathbb{R}} e^{-\pi t^2|x-y|^2} g(x) \, dx  &=& \frac{1}{t} \int_{\mathbb{R}}  e^{2\pi i y \xi} e^{- \frac{\pi |\xi|^2}{t^2}}  \widehat{g}(\xi) \, d\xi.
\end{eqnarray}
\end{lemma}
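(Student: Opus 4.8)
The identity is a standard Gaussian/Fourier computation, and the plan is to derive it by recognizing both sides as instances of Parseval's (or the multiplication) formula. First I would record the two classical facts that make the proof almost mechanical: the Fourier transform of the Gaussian, namely that for $t>0$ one has $\widehat{e^{-\pi t^2|\cdot|^2}}(\xi) = \tfrac{1}{t}\,e^{-\pi|\xi|^2/t^2}$ (with the normalization $\widehat{f}(\xi)=\int_{\mathbb R}e^{-2\pi i x\xi}f(x)\,dx$ fixed earlier in the paper), and the multiplication rule $\int_{\mathbb R}\widehat{u}(x)\,v(x)\,dx = \int_{\mathbb R} u(\xi)\,\widehat{v}(\xi)\,d\xi$, valid for $u,v\in{\mathrm L}^1(\mathbb R)$.

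Next I would apply the multiplication rule with $v=g$ and with $u$ chosen so that $\widehat{u}(x)=e^{-\pi t^2|x-y|^2}$; concretely, take $u(\xi)=e^{2\pi i y\xi}\,\widehat{h}(-\xi)$-type expression, or more cleanly use the translation/modulation interplay: the function $x\mapsto e^{-\pi t^2|x-y|^2}$ is a translate of the Gaussian, so its ``inverse'' partner under the multiplication formula is a modulated, rescaled Gaussian. Carrying the translation through produces the factor $e^{2\pi i y\xi}$, the Gaussian transform produces the factor $\tfrac1t e^{-\pi|\xi|^2/t^2}$, and $g$ contributes $\widehat{g}(\xi)$, giving exactly the right-hand side. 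The only care needed is bookkeeping of signs and of which variable carries the $e^{2\pi i y\xi}$ versus $e^{-2\pi i y\xi}$; since the Gaussian is even this is harmless, but I would double-check the conjugation/sign convention against the paper's definition of $\widehat{\mu}$ and $\widehat{f}$ to make sure the stated form (with $e^{+2\pi i y\xi}$) is reproduced.

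For rigor I would note that all integrals converge absolutely: $g\in{\mathrm L}^1(\mathbb R)$ and the Gaussian kernel is bounded, so the left side is finite; on the right, $\widehat g\in{\mathrm L}^\infty$ and the Gaussian $e^{-\pi|\xi|^2/t^2}\in{\mathrm L}^1$, so the right side is also an absolutely convergent integral, and the multiplication formula applies without any density argument. If one prefers to avoid invoking the multiplication formula as a black box, an equivalent route is to write $e^{-\pi t^2|x-y|^2}$ itself as the inverse Fourier transform $\int_{\mathbb R} e^{2\pi i(x-y)\xi}\tfrac1t e^{-\pi|\xi|^2/t^2}\,d\xi$, substitute into the left-hand side, and interchange the order of integration by Fubini (justified since the joint integrand is dominated by $|g(x)|\cdot\tfrac1t e^{-\pi|\xi|^2/t^2}\in{\mathrm L}^1(\mathbb R^2)$), which directly collapses the $x$-integral into $\widehat g(\xi)$.

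\textbf{Main obstacle.} There is no real obstacle here — the statement is essentially a packaging of Gaussian Fourier calculus, and the only thing that can go wrong is a sign or normalization slip in matching the paper's Fourier conventions. I would therefore spend the bulk of the write-up nailing down the convention and justifying the Fubini interchange, and keep the computation itself short.
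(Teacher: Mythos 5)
Your proof is correct, and it takes a genuinely different (and in fact lighter) route than the paper's. The paper proves the identity by approximating $g$ in ${\mathrm L}^1$ by a sequence $(g_n)\subset{\mathrm L}^1\cap{\mathrm L}^2$, extracting an a.e.\ convergent dominated subsequence, applying Plancherel's theorem to each $g_{n_k}$ paired with the translated Gaussian $\tau_y\Phi_t$, and passing to the limit by dominated convergence on both sides. You instead apply the multiplication formula $\int\widehat{u}\,v=\int u\,\widehat{v}$ directly with $v=g$ and $u(\xi)=\tfrac1t e^{2\pi i y\xi}e^{-\pi|\xi|^2/t^2}$ (whose Fourier transform is exactly $x\mapsto e^{-\pi t^2|x-y|^2}$), or equivalently expand the Gaussian by Fourier inversion and swap integrals by Fubini; since both $u$ and $g$ lie in ${\mathrm L}^1$ and the joint integrand is dominated by $|g(x)|\,\tfrac1t e^{-\pi|\xi|^2/t^2}\in{\mathrm L}^1(\mathbb{R}^2)$, no ${\mathrm L}^2$ theory or density argument is needed. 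The sign bookkeeping you flag does close: $\int e^{2\pi i x\xi}g(x)\,dx=\widehat{g}(-\xi)$, and the change of variable $\xi\mapsto-\xi$ (under which the Gaussian is invariant) yields the stated $e^{+2\pi i y\xi}\widehat{g}(\xi)$. What your approach buys is brevity and minimal hypotheses — the identity is seen to be pure ${\mathrm L}^1$ Fourier calculus; what the paper's approach buys is essentially nothing extra here, though its dominated-convergence scaffolding mirrors arguments reused elsewhere in that section.
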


\begin{proof} Let $g \in {\mathrm L}^1(\mathbb{R})$ and let $(g_n)_{n \geq 1} \subset {\mathrm L}^1(\mathbb{R}) \cap {\mathrm L}^2(\mathbb{R})$ be such that $\displaystyle\lim_{n \to \infty}\|g_n - g\|_{{\mathrm L}^1(\mathbb{R})} = 0$. Then, it follows from Theorem 4.9 in \cite{Brezis} that there exists a subsequence $(g_{n_k})$ and a function $h \in {\mathrm L}^1(\mathbb{R})$ such that $\displaystyle\lim_{k \to \infty} g_{n_k}(x) = g(x)$ for almost every $x \in \mathbb{R}$, and for every $k \geq 1$, $ |g_{n_k}(x)| \leq h(x)$ for  almost every $x \in \mathbb{R}$. We note that for each $t >0$, each $k \geq 1$ and each $\xi \in \mathbb{R}$, 
\[  e^{- \frac{\pi |\xi|^2}{t^2}} |\widehat{g_{n_k}}(\xi)|  \leq  e^{- \frac{\pi |\xi|^2}{t^2}} \|g_{n_k}\|_{{\mathrm L}^1(\mathbb{R})}  \leq  e^{- \frac{\pi |\xi|^2}{t^2}} \|h\|_{{\mathrm L}^1(\mathbb{R})},\]
and 
\[\int_{\mathbb{R}} e^{- \frac{\pi |\xi|^2}{t^2}} d\xi = t.\]
This shows that, for every $t>0$, the sequence $e^{- \frac{\pi |\xi|^2}{t^2}} |\widehat{g_{n_k}}(\xi)|$ is dominated by an integrable function.  
 
Set, for each $t>0$ and each $x \in \mathbb{R}$, $\Phi_t(x) := e^{-\pi t|x|^2}$. Then, for each $t>0$,
\begin{equation}\label{eq0lemma}
\widehat{\Phi_t}(\xi) = \frac{1}{t} e^{- \frac{\pi |\xi|^2}{t^2}}, \quad \xi \in \mathbb{R}.
\end{equation}

It follows from relation (\ref{eq0lemma}), some properties of the Fourier transform, dominated convergence and Plancherel's Theorem that for each $y \in \mathbb{R}$ and each $t >0$, 
\begin{eqnarray*}\nonumber  \int_{\mathbb{R}} e^{-\pi t^2|x-y|^2} g(x) \, dx  &=& \lim_{k \to \infty} \int_{\mathbb{R}} e^{-\pi t^2|x-y|^2} g_{n_k} (x)  \, dx\\ \nonumber &=&  \lim_{k \to \infty} \int_{\mathbb{R}} \overline{(\tau_y \Phi_t)(x)} g_{n_k} (x)\, dx \\ \nonumber  &=& \lim_{k \to \infty} \int_{\mathbb{R}}  \overline{\widehat{(\tau_y \Phi_t)}(\xi)} \widehat{g_{n_k}}(\xi) \,  d\xi \\ \nonumber &=& \lim_{k \to \infty}  \int_{\mathbb{R}} e^{2\pi i y \xi} \widehat{\Phi_t}(\xi)      \widehat{g_{n_k}}(\xi) \, d\xi  \\    \nonumber &=& \lim_{k \to \infty} \frac{1}{t} \int_{\mathbb{R}} e^{2\pi i y \xi} e^{- \frac{\pi |\xi|^2}{t^2}} \widehat{g_{n_k}}(\xi)  \, d\xi\\ &=& \frac{1}{t} \int_{\mathbb{R}}  e^{2\pi i y \xi} e^{- \frac{\pi |\xi|^2}{t^2}}  \widehat{g}(\xi) \, d\xi,
\end{eqnarray*}
where $\tau_yg(\cdot) = g(\cdot-y)$ stands for the translation by $y\in\mathbb{R}$.
\end{proof}

\begin{lemma}\label{harmoniclemma2}
Let $\beta \in (0,1)$, $a \in \mathbb{R}$, and $c > 0$. Set, for every $x \in \mathbb{R}$,
\[
f(x) = \frac{1}{|x - a|^\beta} \, \chi_{[-c,c]}(x).
\]
Then, for every $\xi > 0$, the Fourier transform of $f$ is given by
\[
\widehat{f}(\xi) = \frac{e^{-2\pi i a \xi}}{(2\pi)^{1 - \beta} \, \xi^{1 - \beta}} \int_{2\pi \xi (-c - a)}^{2\pi \xi (c - a)} e^{-i v} |v|^{-\beta} \, dv.
\]
In particular, there exists $\gamma > 0$ such that, for all $\xi \in \mathbb{R}$, $|\widehat{f}(\xi)| \leq \gamma |\xi|^{\beta - 1}.$
\end{lemma}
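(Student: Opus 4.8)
The plan is a direct computation by two successive changes of variable, followed by a uniform estimate on an oscillatory integral. Starting from $\widehat{f}(\xi) = \int_{-c}^{c} e^{-2\pi i x \xi}\,|x-a|^{-\beta}\,dx$ (the integral converges absolutely since $\beta<1$), I would first substitute $u = x-a$ to obtain
\[
\widehat{f}(\xi) = e^{-2\pi i a \xi}\int_{-c-a}^{\,c-a} e^{-2\pi i u \xi}\,|u|^{-\beta}\,du .
\]
For $\xi>0$, the further substitution $v = 2\pi\xi u$ gives $|u|^{-\beta} = (2\pi\xi)^{\beta}|v|^{-\beta}$ and $du = (2\pi\xi)^{-1}\,dv$, which after collecting the powers of $2\pi\xi$ yields exactly the claimed formula
\[
\widehat{f}(\xi) = \frac{e^{-2\pi i a \xi}}{(2\pi)^{1-\beta}\,\xi^{1-\beta}} \int_{2\pi\xi(-c-a)}^{2\pi\xi(c-a)} e^{-iv}\,|v|^{-\beta}\,dv .
\]

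The key step is then to show that $C_\beta := \sup_{A<B}\bigl|\int_A^B e^{-iv}|v|^{-\beta}\,dv\bigr|<\infty$, with $C_\beta$ depending only on $\beta$. I would split the integral at $0$ and at $\pm 1$. On $[-1,1]$ one bounds crudely: $\bigl|\int_0^1 e^{-iv}v^{-\beta}\,dv\bigr|\le\int_0^1 v^{-\beta}\,dv = (1-\beta)^{-1}$, which is finite precisely because $\beta<1$, and likewise on $[-1,0]$. On the tail $[1,B]$ one integrates by parts using the antiderivative $i e^{-iv}$ of $e^{-iv}$: the boundary terms are bounded by $2$ (since $v^{-\beta}\le 1$ there), and the remaining integral $\beta\int_1^B e^{-iv}v^{-\beta-1}\,dv$ is bounded in absolute value by $\beta\int_1^\infty v^{-\beta-1}\,dv = 1$; the range $[A,-1]$ is symmetric. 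Hence $C_\beta<\infty$ independently of the endpoints, in particular of $\xi$, and for $\xi>0$ we get $|\widehat{f}(\xi)|\le (2\pi)^{\beta-1}C_\beta\,\xi^{\beta-1}$.

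To conclude, since $f$ is real-valued one has $\widehat{f}(-\xi)=\overline{\widehat{f}(\xi)}$, so the same bound holds for $\xi<0$ with $|\xi|$ in place of $\xi$; and for $|\xi|\le 1$ one uses simply $|\widehat{f}(\xi)|\le\|f\|_{{\mathrm L}^1(\mathbb{R})}\le\|f\|_{{\mathrm L}^1(\mathbb{R})}\,|\xi|^{\beta-1}$ because $|\xi|^{\beta-1}\ge 1$ there. Taking $\gamma=\max\{(2\pi)^{\beta-1}C_\beta,\ \|f\|_{{\mathrm L}^1(\mathbb{R})}\}$ gives the stated estimate for all $\xi\in\mathbb{R}$. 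The main obstacle is the uniform boundedness of $\int_A^B e^{-iv}|v|^{-\beta}\,dv$: the interval of integration in the formula grows linearly in $\xi$, so a triangle-inequality bound is useless on the tail and one genuinely has to exploit the oscillation of $e^{-iv}$ via integration by parts (equivalently a Dirichlet-test/van der Corput argument); the change-of-variables computation and the reduction to $\xi>0$ are routine bookkeeping by comparison.
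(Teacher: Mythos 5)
Your proposal is correct. The derivation of the exact formula (the two substitutions $u=x-a$ and $v=2\pi\xi u$) is identical to the paper's. Where you diverge is in the decay estimate: the paper argues asymptotically, using $\widehat{f}(-\xi)=\overline{\widehat{f}(\xi)}$ to write
\[
\limsup_{|\xi|\to\infty}|\xi|^{1-\beta}|\widehat{f}(\xi)|\le\Bigl|\int_{-\infty}^{\infty}e^{-iv}|v|^{-\beta}\,dv\Bigr|<\infty ,
\]
i.e.\ it identifies the limit of the partial integrals with the (conditionally convergent) improper integral, evaluated via the Gamma function, and then combines this with $|\widehat{f}(\xi)|\le\|f\|_{{\mathrm L}^1(\mathbb{R})}$ to absorb the bounded range of $\xi$. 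You instead prove the stronger, fully uniform statement $\sup_{A<B}\bigl|\int_A^B e^{-iv}|v|^{-\beta}\,dv\bigr|<\infty$ by splitting at $0$ and $\pm1$, bounding the singular piece absolutely (using $\beta<1$) and the tails by integration by parts against $ie^{-iv}$. Your route is more elementary and quantitative: it produces an explicit constant, it does not require identifying the limiting integral, and it sidesteps a small implicit assumption in the paper's limsup step, namely that the two endpoints $2\pi\xi(-c-a)$ and $2\pi\xi(c-a)$ tend to opposite infinities, which holds only when $a\in(-c,c)$ (if $a\notin[-c,c]$ the limiting integral is over a half-line or degenerates, which is harmless but not what the displayed identity literally says). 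The price is a slightly longer argument; the paper's version is shorter but leans on the reader to supply the uniformity over compact ranges of $\xi$ that your $\max\{(2\pi)^{\beta-1}C_\beta,\ \|f\|_{{\mathrm L}^1(\mathbb{R})}\}$ makes explicit.
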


\begin{proof}
We first apply the change of variable $u = x - a$. Then, the Fourier transform becomes
\[
\widehat{f}(\xi) = \int_{-c}^c \frac{e^{-2\pi i x \xi}}{|x - a|^\beta} \, dx = e^{-2\pi i a \xi} \int_{-c - a}^{c - a} \frac{e^{-2\pi i u \xi}}{|u|^\beta} \, du.
\]
Now, we set $v = 2\pi \xi u$, hence $u = \frac{v}{2\pi \xi}$ and $du = \frac{dv}{2\pi \xi}$. Substituting, we obtain
\[
\widehat{f}(\xi) = e^{-2\pi i a \xi} (2\pi \xi)^{\beta - 1} \int_{2\pi \xi (-c - a)}^{2\pi \xi (c - a)} e^{-i v} |v|^{-\beta} \, dv.
\]
 Note that, as $\widehat{f}(-\xi)=\overline{\widehat{f}(\xi)}$, it follows that 
\[
\limsup_{|\xi| \to \infty} |\xi|^{1-\beta} |\widehat{f}(\xi)| \leq \left| \int_{-\infty}^{\infty} e^{-i v} |v|^{-\beta} \, dv \right| = \Gamma(1 - \beta),
\]
where $\Gamma$ denotes the standard gamma function. Since for all $\xi \in \mathbb{R}$, 
\[|\widehat{f}(\xi)| \leq \|f\|_{{\mathrm L}^1(\mathbb{R})},\]
the result follows.
\end{proof}

\begin{lemma}\label{mainlemma} Let \( f \) and \( g \) be non-negative real-valued functions. Suppose that there exist \( a, b \in \mathbb{R} \), \( c,\gamma, \gamma' > 0 \) and \( \varrho \geq 0 \) such that for every \( x \in \mathbb{R} \),
\[
f(x) \leq \frac{\gamma}{|x - a|^\varrho} \chi_{[-c,c]}(x) =: f_\varrho(x) \quad \text{and} \quad g(x) \leq \frac{\gamma'}{|x - b|^\varrho} \chi_{[-c,c]}(x) =: g_\varrho(x).
\]
Then, there exists a constant $M = M(c,\gamma,\gamma',\varrho)$ such that:

\begin{enumerate}

\item[(i)]  if $0\leq \varrho < \frac{1}{2}$, then for every $t>0$, 
\begin{eqnarray*}
 \int_{\mathbb{R}} \int_{\mathbb{R}} e^{-\pi t^2|x-y|^2} f(x) g(y)   \, dx dy  \; \leq M \, \frac{1}{t};
\end{eqnarray*}

\item[(ii)]  if $\varrho = \frac{1}{2}$, then for every $t>2$, 
\begin{eqnarray*}
 \int_{\mathbb{R}} \int_{\mathbb{R}} e^{-\pi t^2|x-y|^2} f(x) g (y)   \, dx dy  \; \leq M \, \frac{\log(t)}{t};
\end{eqnarray*}

\item[(iii)] if $\frac{1}{2} < \varrho < 1$, then for every $t>0$, 
\begin{eqnarray*}
 \int_{\mathbb{R}} \int_{\mathbb{R}} e^{-\pi t^2|x-y|^2} f(x) g (y)  \, dx dy  \; \leq M  \, \frac{1}{t^{2(1-\varrho)}}.
\end{eqnarray*}

\end{enumerate}
\end{lemma}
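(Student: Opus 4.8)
The plan is to reduce, via the pointwise domination $f\le f_\varrho$, $g\le g_\varrho$ and the non-negativity of the Gaussian kernel, to controlling
\[
J_\varrho(t):=\int_{\mathbb R}\int_{\mathbb R}e^{-\pi t^2|x-y|^2}f_\varrho(x)g_\varrho(y)\,dx\,dy,
\]
since $0\le\int\int e^{-\pi t^2|x-y|^2}f(x)g(y)\,dx\,dy\le J_\varrho(t)$ and all three claimed bounds then follow. The case $\varrho=0$ is elementary ($f_0,g_0$ are multiples of $\chi_{[-c,c]}$), so I assume $\varrho\in(0,1)$. For fixed $y$ I apply Lemma~\ref{1teclemma} to the inner integral, $\int_{\mathbb R}e^{-\pi t^2|x-y|^2}f_\varrho(x)\,dx=\frac1t\int_{\mathbb R}e^{2\pi iy\xi}e^{-\pi|\xi|^2/t^2}\widehat{f_\varrho}(\xi)\,d\xi$ (legitimate since $f_\varrho\in{\mathrm L}^1$ because $\varrho<1$), multiply by $g_\varrho(y)$, integrate in $y$, and interchange the order of integration. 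This is justified by Fubini, since $\|g_\varrho\|_{{\mathrm L}^1}\|\widehat{f_\varrho}\|_\infty\int_{\mathbb R}e^{-\pi|\xi|^2/t^2}\,d\xi=t\,\|g_\varrho\|_{{\mathrm L}^1}\|\widehat{f_\varrho}\|_\infty<\infty$, and yields
\[
J_\varrho(t)=\frac1t\int_{\mathbb R}\widehat{f_\varrho}(\xi)\,\overline{\widehat{g_\varrho}(\xi)}\,e^{-\pi|\xi|^2/t^2}\,d\xi,
\qquad\text{so}\qquad
J_\varrho(t)\le\frac1t\int_{\mathbb R}\bigl|\widehat{f_\varrho}(\xi)\bigr|\,\bigl|\widehat{g_\varrho}(\xi)\bigr|\,e^{-\pi|\xi|^2/t^2}\,d\xi .
\]

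Next I feed in the two estimates on the Fourier side: Lemma~\ref{harmoniclemma2} gives $|\widehat{f_\varrho}(\xi)|\le\gamma_1|\xi|^{\varrho-1}$ and $|\widehat{g_\varrho}(\xi)|\le\gamma_2|\xi|^{\varrho-1}$ for all $\xi\neq0$, while the trivial bound gives $|\widehat{f_\varrho}(\xi)|\le\|f_\varrho\|_{{\mathrm L}^1}$ and $|\widehat{g_\varrho}(\xi)|\le\|g_\varrho\|_{{\mathrm L}^1}$ (both finite). Everything then reduces, up to constants depending only on $c,\gamma,\gamma',\varrho$, to estimating
\[
I_\varrho(t):=\int_{\mathbb R}\min\!\bigl(1,|\xi|^{2\varrho-2}\bigr)\,e^{-\pi|\xi|^2/t^2}\,d\xi ,
\qquad J_\varrho(t)\lesssim \frac{I_\varrho(t)}{t}.
\]
On $|\xi|\le1$ one has $\int_{|\xi|\le1}e^{-\pi|\xi|^2/t^2}\,d\xi\le2$, contributing an $O(1)$ term to $I_\varrho(t)$, hence an $O(1/t)$ term to $J_\varrho$, which is subdominant in all three regimes. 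On $|\xi|>1$ I use $|\xi|^{2\varrho-2}$ and split into cases: if $0\le\varrho<\frac12$ then $\int_1^\infty\xi^{2\varrho-2}\,d\xi<\infty$, so $I_\varrho(t)=O(1)$ and $J_\varrho(t)=O(1/t)$; if $\varrho=\frac12$, I split $\int_1^\infty\xi^{-1}e^{-\pi\xi^2/t^2}\,d\xi$ at $\xi=t$, the first piece being $\int_1^t\xi^{-1}\,d\xi=\log t$ and the second being $\le t^{-1}\int_0^\infty e^{-\pi\xi^2/t^2}\,d\xi=\frac12$, so $I_{1/2}(t)=O(\log t)$ and $J_{1/2}(t)=O(\log t/t)$ for $t>2$; if $\frac12<\varrho<1$ the exponent $2\varrho-2>-1$ is integrable at the origin, so no splitting is needed and the rescaling $\xi=tw$ gives $\int_{\mathbb R}|\xi|^{2\varrho-2}e^{-\pi|\xi|^2/t^2}\,d\xi=t^{2\varrho-1}\int_{\mathbb R}|w|^{2\varrho-2}e^{-\pi w^2}\,dw=O(t^{2\varrho-1})$, whence $J_\varrho(t)=O(t^{2\varrho-2})=O(t^{-2(1-\varrho)})$.

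The bookkeeping — applying Lemma~\ref{1teclemma}, justifying Fubini, invoking Lemma~\ref{harmoniclemma2} — is routine; the genuinely delicate point is the threshold analysis of $I_\varrho(t)$. The logarithm appears exactly at the critical value $\varrho=\frac12$ because $\int_1^t\xi^{-1}\,d\xi$ diverges logarithmically, and one is forced to patch the crude ${\mathrm L}^1$-bound on $\widehat{f_\varrho}$ near $\xi=0$ (needed precisely because $|\xi|^{2\varrho-2}$ fails to be integrable at the origin when $\varrho\le\frac12$) with the decay bound of Lemma~\ref{harmoniclemma2} away from the origin. Keeping track of the fact that every constant introduced depends only on $c,\gamma,\gamma'$ and $\varrho$ then produces the uniform constant $M=M(c,\gamma,\gamma',\varrho)$ of the statement.
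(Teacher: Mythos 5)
Your proposal is correct and follows essentially the same route as the paper: pass to the Fourier side via Lemma~\ref{1teclemma} and Fubini, then combine the trivial ${\mathrm L}^1$ bound on $\widehat{f_\varrho},\widehat{g_\varrho}$ near $\xi=0$ with the decay $|\widehat{f_\varrho}(\xi)|\lesssim|\xi|^{\varrho-1}$ of Lemma~\ref{harmoniclemma2} away from the origin, the only cosmetic differences being that the paper treats case (i) by Cauchy--Schwarz and Plancherel (using $f_\varrho,g_\varrho\in{\mathrm L}^2$ for $\varrho<\tfrac12$) and quotes incomplete-gamma asymptotics where you split at $\xi=t$. One small caveat: your aside that the $O(1/t)$ contribution from $|\xi|\le1$ is ``subdominant in all three regimes'' is false for (iii) when $t<1$ (there $t^{-1}>t^{-2(1-\varrho)}$), but this is harmless because your actual argument for (iii) applies the decay bound on all of $\mathbb{R}$ and never uses that split.
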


\begin{proof}   By taking $g=f_\varrho(\cdot)$ in Lemma \ref{1teclemma}, one has

\begin{eqnarray*}\label{eq00lemma} \int_{\mathbb{R}} e^{-\pi t^2|x-y|^2} f(x) \, dx   &\leq&  \gamma \int_{\mathbb{R}} e^{-\pi t^2|x-y|^2} f_\varrho(x) \, dx\\       &=& \frac{\gamma}{t} \int_{\mathbb{R}} e^{2\pi i y \xi} e^{- \frac{\pi |\xi|^2}{t^2}}  \widehat{f_\varrho}(\xi) \, d\xi.    
\end{eqnarray*}
Thus, by Fubini's Theorem, one obtains for each $t >0$,   

\begin{eqnarray}\label{eq1lemma} \nonumber  &\,& \int_{\mathbb{R}} \int_{\mathbb{R}} e^{-\pi t^2|x-y|^2} f(x)  g(y)\, dx dy\\ \nonumber    &=&  \int_{\mathbb{R}} \biggr(\int_{\mathbb{R}} e^{-\pi t^2|x-y|^2} f(x)  \, dx \biggr) g(y) dy\\ &\leq& \nonumber \frac{\gamma}{t}  \int_{\mathbb{R}} \biggr(\int_{\mathbb{R}} e^{2\pi i y \xi} e^{- \frac{\pi |\xi|^2}{t^2}}  \widehat{f_\varrho}(\xi) \, d\xi \biggr) g(y) dy\\ &\leq&  \nonumber \frac{\gamma \gamma'}{t}  \int_{\mathbb{R}} \biggr(\int_{\mathbb{R}} e^{2\pi i y \xi} e^{- \frac{\pi |\xi|^2}{t^2}}  \widehat{f_\varrho}(\xi) \, d\xi\biggr) g_{\varrho}(y) dy\\ \nonumber &=& \frac{\gamma \gamma'}{t} \int_{\mathbb{R}} e^{- \frac{\pi |\xi|^2}{t^2}}  \widehat{f_\varrho}(\xi) \biggr( \int_{\mathbb{R}}  e^{2\pi i y \xi} g_{\varrho}(y) \, dy \biggr) \, d\xi\\   &=& \frac{\gamma \gamma'}{t} \int_{\mathbb{R}} e^{- \frac{\pi |\xi|^2}{t^2}}  \widehat{f_\varrho}(\xi)  \overline{\widehat{g_{\varrho}}}(\xi) \, d\xi. \\ \nonumber
\end{eqnarray}

Now a separate argument is  necessary for each item.

\

\noindent $(i)$ Given that, for every \( 0 \leq \varrho < \frac{1}{2} \), \( f_\varrho, g_\varrho \in {\mathrm L}^2(\mathbb{R}) \), the result follows from \eqref{eq1lemma}, combined with the Cauchy-Schwarz inequality and Plancherel's Theorem.

\ 

\noindent $(ii)$ $\varrho=1/2$. To every $t> 0$,
\begin{eqnarray*} \int_{-1}^1 e^{- \frac{\pi |\xi|^2}{t^2}}  \widehat{f_\varrho}(\xi)  \overline{\widehat{g_{\varrho}}}(\xi) \, d\xi   &\leq& 2||f_\varrho||_{{\mathrm L}^1(\mathbb{R})} ||g_\varrho||_{{\mathrm L}^1(\mathbb{R})}.
\end{eqnarray*}
By Lemma \ref{harmoniclemma2}, there exists $\gamma'' > 0$ such that, for every $t > 0$,
\[\int_1^\infty e^{- \frac{\pi |\xi|^2}{t^2}}  |\widehat{f_\varrho}(\xi)|  |\widehat{g_{\varrho}}(\xi)| \, d\xi \leq \gamma''\int_1^\infty \frac{e^{- \frac{\pi |\xi|^2}{t^2}} }{|\xi|}  \, d\xi.  \]
Since, by Cauchy's Residue Theorem, for every $t >0$
\begin{equation}\label{eq3lemma}
\int_1^\infty \frac{e^{- \frac{\pi |\xi|^2}{t^2}} }{|\xi|}  \, d\xi = \frac{\Gamma(0, \pi/t^2)}{2} \sim \log(t),    
\end{equation}
one gets from (\ref{eq1lemma}) and (\ref{eq3lemma}) that for every $t >2$,
\begin{eqnarray*}
 \int_{\mathbb{R}} \int_{\mathbb{R}} e^{-\pi t^2|x-y|^2} f(x) g(y)  \, dx dy     &\leq&  \frac{\gamma \gamma'}{t} \int_{-1}^1  e^{- \frac{\pi |\xi|^2}{t^2}}   |\widehat{f_\varrho}(\xi)|  |\widehat{g_{\varrho}}(\xi)|    \, d\xi\\  &+& \frac{\gamma \gamma'}{t} \int_{|x|> 1}  e^{- \frac{\pi |\xi|^2}{t^2}}  |\widehat{f_\varrho}(\xi)|  |\widehat{g_{\varrho}}(\xi)|   \, d\xi\\  &\leq&   \frac{2||f_\varrho||_{{\mathrm L}^1(\mathbb{R})} ||g_\varrho||_{{\mathrm L}^1(\mathbb{R})}  \gamma \gamma'}{t}\\ &+&   \frac{ \gamma \gamma' \gamma'' \Gamma(0, \pi/t^2)}{t} \lesssim \frac{\log(t)}{t},
\end{eqnarray*} 
where $\Gamma(0, \cdot)$ standard for the incomplete gamma function \cite{Abramowitz}.

\ 

\noindent $(iii)$ $1/2<\varrho<1$. Since it follows from Cauchy's Residue Theorem that for every $t >0$,  

\begin{eqnarray}\label{eq2lemma}\nonumber
   \int_{\mathbb{R}} \frac{e^{- \frac{\pi |\xi|^2}{t^2}} }{|\xi|^{2(1-\varrho)}  }  \, d\xi &=&   \pi^{1/2-\varrho}\Gamma\biggr(\varrho - \frac{1}{2}\biggr) t^{2\varrho-1},
\end{eqnarray}
one gets from Lemma \ref{harmoniclemma2}, (\ref{eq1lemma}) and (\ref{eq2lemma}) that for every $t >0$,  
\begin{eqnarray*}
\int_{\mathbb{R}} \int_{\mathbb{R}} e^{-\pi t^2|x-y|^2} f(x)  g(y)\, dx dy \lesssim   t^{-2(1-\varrho)}.
\end{eqnarray*}
\end{proof}

Let us proceed to the proof of Theorem \ref{app1} $(ii)$. As $e^{4\pi^2-(2\pi s)^2/t^2} \geq 1$ for  $s \in [0,t]$ and, by dominated convergence, for every $t>0,$
\[G(y) = \int_{\mathbb{R}}  e^{-\frac{t^2|x-y|^2}{4}} d\mu(x)\]
is continuous, it follows from Lemma \ref{mainlemma} $(ii)$ that, for every $t>2$,
\begin{eqnarray*}\label{maineq1}
\nonumber &\, &\frac{1}{t} \int_0^t \bigg|\int_{\mathbb{R}} e^{-2\pi isx} d\mu (x) \bigg|^2  ds\\ &\leq& \frac{1}{t} \int_0^t \bigg|\int_{\mathbb{R}} e^{-2\pi isx} d\mu (x) \bigg|^2 \, e^{4\pi^2-(2\pi s)^2/t^2} \, ds\\ \nonumber &\leq& \frac{e^{4\pi^2}}{t} \int_{-\infty}^{\infty} \bigg|\int_{\mathbb{R}} e^{-2\pi isx} d\mu (x) \bigg|^2 \, e^{-(2\pi s)^2/t^2} \, ds\\ \nonumber &=& \frac{e^{4\pi^2} }{t}  \int_{\mathbb{R}} \int_{\mathbb{R}}  \biggl\{ \int_{-\infty}^{\infty} \,  e^{-((2\pi s)^2/t^2)-2\pi is(x-y)}  \, ds \biggl\} d\mu (x) d\mu (y) \\ \nonumber &=&  \frac{e^{4\pi^2}  \sqrt{\pi}}{2 \pi}  \int_{\mathbb{R}} \int_{\mathbb{R}}  e^{-\frac{t^2|x-y|^2}{4}} d\mu(x) d\mu(y)\\ \nonumber &\leq&  \frac{ \gamma^2  e^{4\pi^2}  \sqrt{\pi}}{2 \pi}  \int_{\mathbb{R}} \int_{\mathbb{R}}  e^{-\frac{t^2|x-y|^2}{4}} d\nu(x) d\nu(y)\\ \nonumber  &\leq& \sum_{j=1}^n \sum_{l=1}^n \frac{\gamma^2  e^{4\pi^2}  \sqrt{\pi}}{2 \pi}  \int_{\mathbb{R}} \int_{\mathbb{R}}    e^{-\pi (t/2\sqrt{\pi})^2|x-y|^2} f_j(x) f_l(y) dx dy \\ &\leq&  C_\mu \cdot\frac{\log(t)}{t}. 
\end{eqnarray*}

This completes the proof of Theorem \ref{app1} $(ii)$. The proofs of parts~$(i)$ and~$(iii)$ are analogous and are therefore omitted. \hfill \qedsymbol


\section{Proof of Theorem \ref{app3}}\label{sec4}

\subsection{Asymptotic formula II}
\ 

A few preparatory steps are necessary before proving Theorem~\ref{app3}. In what follows, we consider the Schr\"odinger operator \( H = \triangle + V, \) with \( V \in \ell^\infty(\mathbb{Z},\mathbb{R})\). 

We recall that the Weyl-Titchmarsh \(m^+\)-function (\(m^-\)-function), which is the Borel transform of the spectral measure \(\mu_+ = \mu_{\delta_1}^+\) (\(\mu_- = \mu_{\delta_0}^-\)) of the corresponding half-line problem with Dirichlet boundary conditions, is given by
\[
m^\pm(z) = \int \frac{d\mu_\pm(x)}{x - z},\qquad z\in\mathbb{H},
\]
where $\mathbb{H} = \{z\in\mathbb{C}\mid \Im z > 0\}$. Thus, \(m^\pm\) has positive imaginary part for every \(\epsilon > 0\). Let, for each $E\in\mathbb{R}$ and each $\epsilon>0$,
\[
M(E + i\epsilon) = \int \frac{1}{x - (E + i\epsilon)}d\mu(x),
\]
where $\mu=\mu_{\delta_0}+\mu_{\delta_1}$. Note that \(M(E + i\epsilon) \in \mathbb{H}\). It is known that for every $\epsilon>0$,
\begin{equation}\label{eq0000000000000000}
    \Im M(E + i\epsilon) \geq \frac{1}{2\epsilon} \mu(E - \epsilon, E + \epsilon).
\end{equation}

We also recall that \cite{Jitomirskaya2}
\[m^+_\beta = R_{-\beta/2\pi} \cdot m^+.\]
Those are Borel transforms of the spectral measures \(\mu^\beta = \mu_{\delta_1}^\beta\) of \(H_\beta\) on \(\ell^2(\mathbb{Z}^{+})\) with boundary conditions \(u(0) \cos(\beta) + u(1) \sin(\beta) = 0\), $-\frac{\pi}{2} < \beta <\frac{\pi}{2}$. Here, 
\[
\begin{pmatrix} a & b \\
c & d \end{pmatrix} \cdot z = \frac{az + b}{cz + d}
\]
stands for the action of \(\mathrm{SL}(2, \mathbb{C})\) on \(\mathbb{C}\).

Let $\psi:\mathbb{H}\rightarrow\mathbb{R}_+$, \(\psi(z) = \sup_\beta |z^\beta|\). It was shown in \cite{AvilaUaH} that
\begin{equation}\label{maineqeq1}
\psi(z)^{-1} \leq \Im z \leq |z| \leq \psi(z),    
\end{equation}
\begin{equation}\label{maineqeq2}
\psi(M) \leq \psi(m_+)
\end{equation}
(regardless of the value of \(m_- \in \mathbb{H}\)). 

Set, for each $k \geq 1$ and each $E \in \mathbb{R}$,
\begin{equation}\label{operators}
P_{k}(E) = \sum_{n=1}^k T^*(E, 2n-1, 0)T(E, 2n-1, 0).    
\end{equation}

The following result by Avila and Jitomirskaya is a version of the Jitomirskaya-Last inequality (Theorem 1.1 in \cite{Jitomirskaya}, see also \eqref{AJeq01} below) for \(P_k(E)\). We present the details, for the sake of clarity.
\begin{lemma}[Lemma 4.2. in \cite{AvilaUaH}]\label{lemmaAvila} Let $E\in\mathbb{R}$, $k \geq 1$ and let $P_k(E)$ be given by~\eqref{operators}. Then, there exists a constant $\gamma>0$ such that
\[
\gamma^{-1} < \frac{ \psi(m^{+}\left(E + i\epsilon_k \right)) }{||P_k(E)||\epsilon_k}   < \gamma,
\]
where $\eps_k:=\dfrac{1}{\sqrt{4\det P_{k}(E)}}$.
\end{lemma}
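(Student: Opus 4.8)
The statement asserts a two-sided bound comparing $\psi(m^+(E+i\epsilon_k))$ with $\|P_k(E)\|\,\epsilon_k$, where the scale $\epsilon_k$ is chosen precisely so that $\det P_k(E)\,\epsilon_k^2 = 1/4$; this calibration is the whole point, since $P_k(E)$ is a positive definite symmetric $2\times 2$ matrix and such a matrix is, up to a bounded factor, determined by its norm once its determinant is normalized. The plan is to reduce the statement to the classical Jitomirskaya-Last inequality relating $\Im m^+(E+i\epsilon)$ to a sum $\sum_{n}\|T(E,n,0)v\|^2$ over a window of length $\sim 1/\epsilon$, and then to translate that sum into the operator norm of $P_k(E)$ using elementary linear algebra for positive $2\times 2$ matrices together with the inequalities \eqref{maineqeq1} and \eqref{maineqeq2}.

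First I would record the linear-algebra facts: for a positive definite symmetric $A\in\mathrm{Mat}_2(\mathbb{R})$ one has $\tfrac12\|A\| \le \|A\| \le \operatorname{tr} A \le 2\|A\|$ and $\det A \le \tfrac14(\operatorname{tr} A)^2$, and, crucially, for any unit vector $v$, $\langle v, A v\rangle$ ranges between the two eigenvalues $\lambda_{\min}\le\lambda_{\max}$ of $A$, with $\lambda_{\max}=\|A\|$ and $\lambda_{\min}=\det A/\|A\|$. Applying this to $A=P_k(E)$, so that $\|P_k(E)\|\,\epsilon_k$ and $\epsilon_k^{-1} = 2\sqrt{\det P_k(E)}\,\|P_k(E)\|^{-1}\cdot\|P_k(E)\|$ — more cleanly, $\|P_k(E)\|\epsilon_k = \|P_k(E)\|/\bigl(2\sqrt{\det P_k(E)}\bigr)$ and $\lambda_{\min}(P_k)\epsilon_k = \sqrt{\det P_k(E)}/(2\|P_k(E)\|)$ is comparable to $(\|P_k(E)\|\epsilon_k)^{-1}$ — one sees that for any unit vector $v$, $\langle v, P_k(E) v\rangle\,\epsilon_k$ lies between a constant times $(\|P_k(E)\|\epsilon_k)^{-1}$ and $\|P_k(E)\|\epsilon_k$. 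Next I would use the definition \eqref{operators}: $\langle v, P_k(E)v\rangle = \sum_{n=1}^k \|T(E,2n-1,0)v\|^2$, so this is exactly the kind of transfer-matrix sum appearing in the Jitomirskaya-Last inequality; choosing $v$ appropriately (the vector encoding the Dirichlet boundary condition, or its orthogonal complement) and invoking that inequality at the spectral parameter $E+i\epsilon_k$ yields $\Im m^+(E+i\epsilon_k)$ comparable to $\langle v_{\mathrm{bc}}, P_k(E)v_{\mathrm{bc}}\rangle\,\epsilon_k$ up to universal constants (here one must check that summing over odd indices $2n-1$, $n=1,\dots,k$, covers a window of length $\sim \epsilon_k^{-1}$, which follows because $\epsilon_k^{-1}$ is comparable to $k$ when the transfer matrices do not grow — and more generally the relevant form of the inequality, as used in \cite{AvilaUaH}, is stated directly in terms of these half-length sums).

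Finally, I would upgrade from $\Im m^+$ to $\psi(m^+)$: by \eqref{maineqeq1}, $\psi(m^+(E+i\epsilon_k))$ is comparable to $\max\{|m^+|, 1/\Im m^+,\ldots\}$, so one needs matching \emph{upper} bounds not just on $\Im m^+$ but on $|m^+|$ and on $(\Im m^+)^{-1}$ in terms of $\|P_k(E)\|\epsilon_k$ and its reciprocal. The lower bound on $\langle v, P_k v\rangle\epsilon_k$ derived above handles $(\Im m^+)^{-1}$, while the upper bound handles $\Im m^+$; for $|m^+|$ one uses that $\psi$ already dominates $|z|$ and that the reciprocal Weyl function (corresponding to the complementary boundary condition) satisfies the same JL inequality with $P_k$ replaced by the analogous sum of $\|T^* \cdot\|^2$ over the orthogonal vector, which is again comparable to $\langle \cdot, P_k \cdot\rangle$ since both one-dimensional quadratic forms are squeezed between $\lambda_{\min}(P_k)$ and $\lambda_{\max}(P_k)=\|P_k\|$. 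Combining these four comparisons and absorbing all the universal multiplicative constants into a single $\gamma$ gives the claimed two-sided bound. The main obstacle I anticipate is purely bookkeeping: making sure the JL inequality is applied in the precise form (which window, which vector, Frobenius versus operator norm) so that the determinant normalization built into $\epsilon_k$ matches the length of the summation window $n=1,\dots,k$; the analytic content is entirely contained in the classical inequality and in \eqref{maineqeq1}--\eqref{maineqeq2}, both of which we may cite.
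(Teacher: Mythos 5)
Your overall strategy --- the Jitomirskaya--Last inequality combined with the elementary spectral theory of the positive matrix $P_k(E)$ --- is the same as the paper's, and your identification of $\langle v,P_k(E)v\rangle$ with the sums $\sum_{n=1}^k\|T(E,2n-1,0)v\|^2$ is exactly the computation the paper carries out. However, there is a genuine gap in your final step, the ``upgrade'' from $\Im m^+$ to $\psi(m^+)$. The inequality \eqref{maineqeq1} reads $\psi(z)^{-1}\le\Im z\le|z|\le\psi(z)$: it bounds $\psi$ from \emph{below} by $|z|$ and by $(\Im z)^{-1}$ and gives no upper bound on $\psi(z)=\sup_\beta|z^\beta|$ in terms of $|z|$ and $\Im z$, so the asserted ``comparability'' is not what \eqref{maineqeq1} says. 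Controlling $|m^+|$ and the single complementary function $m^+_{\pi/2}$ does not control the whole family $\{m^+_\beta\}$, so the upper half of the claimed two-sided bound is not established by the comparisons you list. (Also, two quadratic-form values both lying in $[\lambda_{\min}(P_k),\lambda_{\max}(P_k)]$ are \emph{not} thereby comparable, since $\lambda_{\max}/\lambda_{\min}=\|P_k\|^2/\det P_k$ is unbounded in $k$.) The paper avoids all of this: Theorem 1.1 of \cite{Jitomirskaya} is a two-sided bound on $|m^+_\beta|$ for \emph{every} $\beta$, namely $|m^+_\beta(E+i\epsilon)|\asymp 2\epsilon\,\|u^{\beta+\pi/2}\|_L^2$ at the matching scale, and since the unit vectors $(\cos\beta,-\sin\beta)$ sweep out all directions one has $\sup_\beta\|u^{\beta}\|_{2k}^2=\|P_k(E)\|$, from which the upper bound on $\psi(m^+)=\sup_\beta|m^+_\beta|$ is read off directly and the lower bound follows from the maximizing $\beta$. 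Your route could in principle be repaired by first proving the exact asymptotic $\psi(z)\asymp(1+|z|^2)/\Im z$ and then supplying a two-sided bound on $\Im m^+$ itself, but none of that is in the proposal.

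A second, smaller gap is the calibration of $\epsilon_k$ with the window $L=2k$. The relevant fact is not that the window has length $\sim\epsilon_k^{-1}$ --- your remark that ``$\epsilon_k^{-1}$ is comparable to $k$ when the transfer matrices do not grow'' fails at hyperbolic energies, and the lemma is asserted for all $E\in\mathbb{R}$ --- but the exact identity $\det P_k(E)=\inf_\beta\|u^\beta\|_{2k}^2\,\|u^{\beta+\pi/2}\|_{2k}^2$, which makes $\epsilon_k=(4\det P_k(E))^{-1/2}$ precisely the Jitomirskaya--Last scale $\epsilon=(2\|u^\beta\|_L\|u^{\beta+\pi/2}\|_L)^{-1}$ for the extremal $\beta$ and $L=2k$. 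Your linear-algebra paragraph contains the raw ingredient ($\lambda_{\min}\lambda_{\max}=\det$), but the proposal never makes this identification, and without it the inequality is being invoked at the wrong spectral parameter.
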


\begin{proof}
Set, for every $L \geq 1$,
\[||u||_L^2 := \sum_{n=1}^L |u(n)|^2, \, u \in \ell^2(\mathbb{Z}).\]
As we deal only with bounded potentials, this definition of norm does not affect the inequality in Theorem 1.1 in \cite{Jitomirskaya}. 

Let $u^\beta$, $\beta\in(-\pi/2,\pi/2]$, be a solution to the eigenvalue equation $H_\beta u=Eu$ such that 
\[u^\beta(0) \cos(\beta) + u^\beta(1) \sin(\beta) = 0,\]
\[\quad |u^\beta(0)|^2 + |u^\beta(1)|^2 = 1.
\]
Let $0<\epsilon:=\dfrac{1}{2\|u^\beta\|_L \|u^{\beta + \pi/2}\|_L}$. It follows from Theorem 1.1 in \cite{Jitomirskaya} (see also (2.13) in \cite{Jitomirskaya2}) that 
\begin{equation}\label{AJeq01}
5 - \sqrt{24} < |m^+_\beta(E + i\epsilon)| \frac{\|u^\beta\|_L}{\|u^{\beta + \pi/2}\|_L} < 5 + \sqrt{24},    
\end{equation}
or, equivalently, that
\[
5 - \sqrt{24} < \frac{|m^+_\beta(E + i\epsilon)|}{2\epsilon \|u^{\beta + \pi/2}\|_L^2} < 5 + \sqrt{24}.
\]

Note that for any $\beta\in(-\pi/2,\pi/2]$, if $L=2k$, then
\begin{eqnarray*}\nonumber \|u^\beta\|_{L}^2 &=&\|u^\beta\|_{2k}^2 = \sum_{n=1}^{k} |u^\beta(2n)|^2 + \sum_{n=1}^{k} |u^\beta(2n-1)|^2 
\\ &=& \sum_{n=1}^k \left\langle 
\begin{pmatrix}
u^\beta(2n) \\ 
u^\beta(2n-1)
\end{pmatrix},
\begin{pmatrix}
u^\beta(2n) \\ 
u^\beta(2n-1)
\end{pmatrix} 
\right\rangle\\ \nonumber
&=& \sum_{n=1}^k \left\langle T(E, 2n-1, 0) 
\begin{pmatrix}
u^\beta(1) \\ 
u^\beta(0)
\end{pmatrix},
T(E, 2n-1, 0)
\begin{pmatrix}
u^\beta(1) \\ 
u^\beta(0)
\end{pmatrix} 
\right\rangle\\ \nonumber &=& \left\langle \sum_{n=1}^k  T^*(E, 2n-1, 0)T(E, 2n-1, 0) 
\begin{pmatrix}
u^\beta(1) \\ 
u^\beta(0)
\end{pmatrix},
\begin{pmatrix}
u^\beta(1) \\ 
u^\beta(0)
\end{pmatrix} 
\right\rangle\\ &\leq& \biggr \|\sum_{n=1}^k T^*(E, 2n-1, 0)T(E, 2n -1, 0) \biggr\|\\ &=& ||P_k(E)||,
\end{eqnarray*}
with equality attained for $\beta$ that maximizes $\|u^\beta\|_{L}^2$. Thus,
\begin{equation}\label{detPk}
\det P_k(E) = \inf_\beta \|u^\beta\|_L^2 \|u^{\beta + \pi/2}\|_L^2,    
\end{equation}
the infimum being attained at the critical points of \(\beta \mapsto \|u^\beta\|_L^2\). Therefore, if  $\eps_k=\dfrac{1}{\sqrt{4\det P_{k}(E)}}$, then for every \(\beta\in(-\pi/2,\pi/2]\),
\[
\frac{|m^+_\beta(E + i\epsilon_k)|}{2\epsilon_k \|P_k(E)\|} < 5 + \sqrt{24},
\]
and if \(\beta\) is such that \(\|u^{\beta + \pi/2}\|_L^2\) is maximal, then
\[
\frac{|m^+_\beta(E + i\epsilon_k)|}{2\epsilon_k \|P_k(E)\|} > 5 - \sqrt{24}.
\]
\end{proof}

The following result provides the final ingredient needed for the proof of Theorem~\ref{app3}.

\begin{theorem}\label{transferthm2} Let $H= \triangle+V$ be acting on  \( \ell^2(\mathbb{Z}) \), with $V \in \ell^{\infty}(\mathbb{Z},\mathbb{R})$. Assume that, for each $E \in \sigma(H)$, one has $\displaystyle\lim_{k\to\infty}\|P_k(E)^{-1}\|=0$ and  $\displaystyle\lim_{k\to\infty} \det P_k(E) =\infty$. Let $\eta>0$, \(a>3\),\,\(\tau > 4 \), and let $b \in \ell^{\infty}(\mathbb{Z},\mathbb{R})$ such that
\[
|b(n)| \leq \eta (a+2||V||_\infty)^{-\tau n}, \quad n \geq 1.
\]
 Let $|\kappa| \leq  {1}/{||b||_\infty}$ and let \( H^\kappa = \triangle + \kappa b \). Then, there exist universal constants $\gamma, \gamma'>0$ such that, for every $k \geq 1$ and every $E \in \sigma(H^\kappa)$,
\begin{enumerate}
\item[(i)] $\gamma^{-1} ||P_k(E)|| \leq  ||P_k^\kappa(E)|| \leq \gamma ||P_k(E)||,$
\item[(ii)] ${\gamma'}^{-1} \det P_k(E) \leq  \det P_k^\kappa(E) \leq \gamma' \det P_k(E)$, 
\end{enumerate}
where $P_k(E)$ and $P^\kappa_k(E)$ stand, respectively, for the operators defined in~\eqref{operators} for $H$ and $H^\kappa$.
\end{theorem}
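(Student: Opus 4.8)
The plan is to deduce both assertions from the single structural identity of Theorem~\ref{transferthm}, read in its sharpest form. Since $T(E,m,0)$ and $T^\kappa(E,m,0)$ are unimodular, I would write
\[
T^\kappa(E,m,0)=T(E,m,0)\,\bigl(I-Q_m(E)\bigr),\qquad I-Q_m(E)=T(E,m,0)^{-1}T^\kappa(E,m,0),
\]
where $Q_m$ is the partial sum of the series defining the matrix $Q$ of Theorem~\ref{transferthm}, so that $\|Q_m-Q\|=\|R_m\|\le C|\kappa|(a+2\|V\|_\infty)^{(2-\tau)m}$. Three facts will be used repeatedly: (a) $\det(I-Q_m)=1$ for every $m$, being a product of unimodular matrices; (b) $\|(I-Q_m)^{\pm1}\|\le C$ with $C$ depending only on $b,V,a,\tau$ (from $\|I-Q_m\|\le 1+\eta|\kappa|$ together with $\det(I-Q_m)=1$, which also controls the inverse); and (c) after multiplication by $\|T(E,2n-1,0)\|^2\le\gamma^2(a+2\|V\|_\infty)^{2(2n-1)}$ (Lemma~\ref{02teclemma}), the quantities $\|R_{2n-1}\|$ and $\|R_{2n-1}\|^2$ become \emph{summable in $n$ precisely because $\tau>4$}. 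I would also record the uniform positivity $P_k(E),\,P_k^\kappa(E)\ge c_0\,I$ with $c_0>0$ not depending on $E$ or $k$: this already holds for the single term $n=1$, since $T(E,1,0)$ and $T^\kappa(E,1,0)$ are fixed bounded unimodular matrices (as $|E|\le 3+\|V\|_\infty$ on the spectrum), hence the corresponding positive matrix has smallest eigenvalue bounded below; the remaining terms only increase $P_k$. Throughout, ``universal'' is meant as ``uniform in $E\in\sigma(H^\kappa)$ and $k$''.

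For part~(i) I would expand $P_k^\kappa(E)=\sum_{n=1}^k(I-Q_{2n-1})^*\,T(E,2n-1,0)^*T(E,2n-1,0)\,(I-Q_{2n-1})$, substitute $I-Q_{2n-1}=(I-Q)+R_{2n-1}$, and use $\|v+w\|^2\le 2\|v\|^2+2\|w\|^2$ termwise. The leading contribution is $\langle(I-Q)^*P_k(E)(I-Q)v,v\rangle\le\|I-Q\|^2\langle P_k(E)v,v\rangle$, while the remainder is $\le\bigl(\sum_n\|T(E,2n-1,0)\|^2\|R_{2n-1}\|^2\bigr)\|v\|^2\le C''\|v\|^2\le (C''/c_0)\,\langle P_k(E)v,v\rangle$, the last step using $P_k(E)\ge c_0I$. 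Taking the supremum over unit $v$ yields $\|P_k^\kappa(E)\|\le\gamma\|P_k(E)\|$; the reverse inequality follows verbatim from the mirror identity $T(E,m,0)=T^\kappa(E,m,0)(I-\widetilde Q_m(E))$ obtained by running Theorem~\ref{transferthm} in the other order, which enjoys the same three properties.

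For part~(ii) I would pass to adjugates, exploiting that in dimension $2$ the adjugate is additive and reverses products and that $\operatorname{adj}(A^*)=(\operatorname{adj}A)^*$. Since every $T(E,2n-1,0)^*T(E,2n-1,0)$ and $T^\kappa(E,2n-1,0)^*T^\kappa(E,2n-1,0)$ has determinant $1$, one gets
\[
\operatorname{adj}(P_k(E))=\sum_{n=1}^k\bigl(T(E,2n-1,0)^*T(E,2n-1,0)\bigr)^{-1},\quad
\operatorname{adj}(P_k^\kappa(E))=\sum_{n=1}^k(I-Q_{2n-1})^{-1}\bigl(T(E,2n-1,0)^*T(E,2n-1,0)\bigr)^{-1}\bigl((I-Q_{2n-1})^{-1}\bigr)^*.
\]
Writing $(I-Q_{2n-1})^{-1}=(I-Q)^{-1}+E_{2n-1}$ with $E_{2n-1}=-(I-Q_{2n-1})^{-1}R_{2n-1}(I-Q)^{-1}$ and factoring the fixed conjugation by $Y:=(I-Q)^{-1}$ (note $\det Y=\lim_m\det(I-Q_m)^{-1}=1$), I obtain $\operatorname{adj}(P_k^\kappa(E))=Y\bigl[\operatorname{adj}(P_k(E))+\mathcal G_k(E)\bigr]Y^*$ with $\|\mathcal G_k(E)\|\le C\sum_n\|T(E,2n-1,0)\|^2\|E_{2n-1}\|\le C'|\kappa|$, finite and uniform in $E,k$ (again exactly where $\tau>4$ enters). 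Multiplicativity of $\det$ and $\det Y=1$ give $\det(P_k^\kappa(E))=\det\bigl(\operatorname{adj}(P_k(E))+\mathcal G_k(E)\bigr)$, and now the point is that $A:=\operatorname{adj}(P_k(E))=\det(P_k(E))\,P_k(E)^{-1}$ has smallest eigenvalue $\lambda_{\min}(P_k(E))\ge c_0$; for a positive $2\times2$ matrix $A\ge c_0I$ and a Hermitian $G$ with $\|G\|\le c_0/2$ one has $\det(A+G)=\det A\cdot\det(I+A^{-1/2}GA^{-1/2})$ with the last factor in $[\tfrac14,\tfrac94]$, hence $\det(P_k^\kappa(E))$ and $\det(P_k(E))$ are comparable; the reverse bound again comes from the mirror identity.

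The main obstacle will be closing part~(ii). In contrast with the error term in part~(i), the perturbation $\mathcal G_k$ of $\operatorname{adj}(P_k)$ is only bounded, not $o(1)$, in operator norm: the summand $\|T(E,2n-1,0)\|^2$ may be very large while $\|E_{2n-1}\|$ merely decays, so the whole sum is controlled only through the geometric cancellation available when $\tau>4$. What makes the determinant comparison go through is the interplay between this exponential decay and the uniform positivity $P_k(E)\ge c_0I$ coming from the very first transfer-matrix step --- this positivity is what allows an additive, non-vanishing error to be absorbed into a multiplicative one at the level of determinants. Getting this balance right, so that $\|\mathcal G_k(E)\|$ is dominated by $c_0$ uniformly in $E$ and $k$ --- which forces one to use the smallness and the exponential decay of the perturbation \emph{quantitatively}, not merely qualitatively --- is the crux of the argument; the surrounding matrix manipulations are routine.
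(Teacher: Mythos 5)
Your part~(i) is essentially the paper's argument: both rest on the exact identity $T^\kappa(E,2n-1,0)=T(E,2n-1,0)(I-Q_{2n-1}(E))$ from Theorem~\ref{transferthm}, leading to $P_k^\kappa(E)=(I-Q)^*P_k(E)(I-Q)+(\text{summable error})$, with the additive error absorbed because $\|P_k(E)\|$ is bounded below (the paper leaves this last point implicit; your $P_k(E)\ge c_0 I$ from the $n=1$ term supplies it). For part~(ii) you take a genuinely different route: the paper proves a general determinant-perturbation lemma (Lemma~\ref{03teclemma}) and applies it to the identity above, checking invertibility of $I-Q$ by a Neumann series on $I+T_\kappa^{-1}R_{2n-1}$ for large $n$, whereas you pass to adjugates and get invertibility of $I-Q$ for free from $\det(I-Q_m)=\det\bigl(T^{-1}T^\kappa\bigr)=1$, hence $\det(I-Q)=1$ and $\|(I-Q)^{-1}\|=\|I-Q\|$ in the Frobenius norm --- a neat simplification.

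However, the step you yourself flag as the crux of part~(ii) does not close as proposed. You need $\|\mathcal G_k(E)\|\le c_0/2$, where $c_0$ is the \emph{fixed} lower bound on $\lambda_{\min}(P_k(E))$ coming from the single term $n=1$, and your bound is $\|\mathcal G_k(E)\|\le C'|\kappa|$ with $C'$ depending on $b,V,a,\tau$. The hypothesis $|\kappa|\le 1/\|b\|_\infty$ is a normalization, not a smallness condition: the quantity that actually controls $Q$, $R_m$ and hence $\mathcal G_k$ is $\sum_j|\kappa b(j)|(a+2\|V\|_\infty)^{2j}$, which is finite but can be arbitrarily large compared with $c_0$ (and Theorem~\ref{transferthm2} is invoked in the proof of Theorem~\ref{app3} precisely in this non-small regime). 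So ``using the smallness of the perturbation quantitatively'' is not an available resolution. The correct absorption mechanism is different: $\lambda_{\min}(P_k(E))=\|P_k(E)^{-1}\|^{-1}\to\infty$ as $k\to\infty$ (this is the hypothesis $\lim_k\|B_k^{-1}\|=0$, together with $\det B_k\to\infty$, in the paper's Lemma~\ref{03teclemma}), so the merely \emph{bounded} Hermitian error $\mathcal G_k$ is eventually dominated by $\lambda_{\min}(\operatorname{adj}P_k(E))=\lambda_{\min}(P_k(E))$; the finitely many remaining $k$ are handled separately (for $k\le k_0$ both $\det P_k$ and $\det P_k^\kappa$ lie in $[1,C_{k_0}]$ by monotonicity of the determinant on positive matrices and boundedness of the transfer matrices on the compact spectrum, so the comparison is trivial there). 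To make the large-$k$ threshold uniform in $E$ you also need the divergence of $\lambda_{\min}(P_k(E))$ to be uniform on $\sigma(H^\kappa)$, e.g.\ by continuity in $E$, monotonicity in $k$ and a Dini-type argument. With this replacement your adjugate computation does yield part~(ii); as written, the anchor $c_0$ is the wrong one.
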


 
\subsection{Proof of Theorem \ref{app3}}

\noindent $(i)$. It is a direct consequence of Corollary \ref{maincor0101} and Theorem 1.1 in \cite{Damanik4}    
\

\noindent $(ii)$. Let $E \in \mathbb{R}$. It follows from Corollary 4.7 in \cite{AvilaUaH} that there exists a constant \( \tilde{\gamma} > 0 \) such that for each $k\geq 1$, 
\[
\|P_k(E)\| \leq \tilde{\gamma} \delta_k^{-\frac{3}{2}},
\]
where \( \delta_k = \dfrac{1}{\sqrt{4 \det P_k(E)}} \). We note that $P_{k}(E)$ is an increasing family of positive self-adjoint linear maps; $\det P_{k}(E)$ and $||P_{k}(E)||$ are increasing positive functions with $\displaystyle\lim_{k \to \infty} \det P_{k}(E) = \infty$ and  $\displaystyle\lim_{k \to \infty} ||P_{k}(E)|| = \infty$. Moreover, $\displaystyle\lim_{k\to\infty}\|P_{k}^{-1}(E)\|=0$ \cite{AvilaUaH}. Therefore, it follows from Lemma \ref{lemmaAvila} and Theorem~\ref{transferthm2} that for every \( E \in \sigma(H^\kappa) \),
\begin{equation}\label{MIMP}
\psi(m_\kappa^{+}(E + i\epsilon_k)) \leq  \gamma' \epsilon_k^{-\frac{1}{2}}, 
\end{equation}
where $\gamma':=\gamma\tilde{\gamma}>0$, \( \epsilon_k = \dfrac{1}{\sqrt{4 \det P_k^\kappa(E)}} \), and \( m_\kappa^+ \) stands for the \( m_\kappa \)-function associated  with the operator \( H_+^\kappa \). It follows from \eqref{maineqeq1}, \eqref{maineqeq2} and~\eqref{MIMP} that   
\begin{equation}\label{eq078}
( \gamma')^{-1} \epsilon_k^{\frac{1}{2}} \leq  \Im M_\kappa(E + i\epsilon_k) \leq  \gamma' \epsilon_k^{-\frac{1}{2}}.    
\end{equation}

Now, note that, for any bounded potential and any solution \( u \), we have \( \|u\|_{L+1} \leq C \|u\|_L \). Thus, by equation \eqref{detPk}, \( \epsilon_{k+1} > c \epsilon_k \), for some positive constants \( C, c > 0 \). Note also that the following function \( g(\epsilon) = \epsilon^{-1} \Im M_\kappa(E + i\epsilon) \) is monotonic.

Finally, let \( \epsilon \in (0, 1] \). Then there exists \( k \geq 1 \) such that $\epsilon_{k+1} \leq \epsilon \leq \epsilon_k.$ By the monotonicity of \( g(\epsilon) \), we have
\[
\frac{1}{\epsilon} \Im M_\kappa(E + i\epsilon)
\leq \frac{1}{\epsilon_{k+1}} \Im M_\kappa(E + i\epsilon_{k+1}),
\]
that is,
\[
\Im M_\kappa(E + i\epsilon)
\leq \frac{\epsilon}{\epsilon_{k+1}} \Im M_\kappa(E + i\epsilon_{k+1}).
\]
Using \eqref{eq078}, we have
\[
\Im M_\kappa(E + i\epsilon)
\leq \gamma' \epsilon_{k+1}^{-3/2}\epsilon.
\]
Since \( \epsilon_{k+1} \geq c \epsilon_k \geq c \epsilon \), it follows that $\epsilon_{k+1}^{-3/2} \leq c^{-3/2} \epsilon^{-3/2}.$
Thus, we conclude that
\[
\Im M_\kappa(E + i\epsilon)
\leq \gamma' c^{-3/2} \epsilon^{1 - 3/2}
=\gamma'' \epsilon^{-1/2},
\]
where \( \gamma'' := \gamma' c^{-3/2} \).
Therefore, for every \( \epsilon \in (0, 1]\),
\[
\Im M_\kappa(E + i\epsilon) \leq \gamma'' \epsilon^{-1/2}.
\]
By the same argument, we can obtain the lower bound. Thus, for every \( \epsilon \in (0,1] \),
\begin{equation*}\label{eq00000000000000001}
(\eta)^{-1} \epsilon^{\frac{1}{2}} \leq  \Im M_\kappa(E + i\epsilon) \leq  \eta \epsilon^{-\frac{1}{2}}, \, \eta>0,
\end{equation*}
and the result for the canonical spectral measure follows from \eqref{eq0000000000000000}, which implies the statement of the theorem, since for all \( f, g \in C(\sigma(H^\kappa)) \),
\begin{equation*}
    \mu_{f(H^\kappa)\delta_0+g(H^\kappa)\delta_1}  \leq 4 \max\{\left\| f\right\|_{\infty}^2,\left\| g\right\|_{\infty}^2\} \mu.  
\end{equation*}


\subsection{Proof of the Asymptotic formula II }\label{sec5}
\ 

\begin{lemma}\label{03teclemma}
Let $\{A_k\}_{k \geq 1}$ and $\{B_k\}_{k \geq 1}$ be sequences of $2 \times 2$ matrices such that
\[
A_k = C B_k D + R_k,
\]
where $\{B_k\}$, $C,$ and $ D$ are invertible matrices and $\{R_k\}$ is a uniformly bounded sequence (that is, there is $\gamma > 0$ so that $\|R_k\| \leq \gamma$ for each $k$). Assume that $\displaystyle\lim_{k\to\infty}\|B_k^{-1}\|=0$ and $\displaystyle\lim_{k\to\infty} \det B_k =\infty$. Then, for  sufficiently large $k$,
\[
\det(A_k) = \det(C)\det(D)\det(B_k)\left(1 + o(1)\right);
\]
in particular, there exists $\gamma'>0$ such that for each $k \geq 1$,
\[
 |\det(A_k)| \leq \gamma' ||C||^2 ||D||^2 |\det(B_k)|.
\]
\end{lemma}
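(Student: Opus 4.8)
The plan is to factor out the dominant matrix $C B_k D$ and to treat the remainder $R_k$ as a small multiplicative perturbation. Since $C$, $D$ and each $B_k$ are invertible, one may write
\[
A_k = C B_k D + R_k = (C B_k D)\bigl(I + M_k\bigr), \qquad M_k := D^{-1} B_k^{-1} C^{-1} R_k .
\]
By submultiplicativity of the norm together with the hypotheses $\|R_k\| \le \gamma$ and $\|B_k^{-1}\| \to 0$,
\[
\|M_k\| \le \|D^{-1}\|\,\|B_k^{-1}\|\,\|C^{-1}\|\,\|R_k\| \le \gamma\,\|C^{-1}\|\,\|D^{-1}\|\,\|B_k^{-1}\| ,
\]
and the right-hand side tends to $0$ as $k \to \infty$.

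Next I would combine multiplicativity of the determinant with the elementary identity $\det(I+M) = 1 + \operatorname{tr}(M) + \det(M)$, valid for any $2 \times 2$ matrix $M$, to obtain
\[
\det A_k = \det(C)\,\det(D)\,\det(B_k)\,\bigl(1 + \operatorname{tr}(M_k) + \det(M_k)\bigr).
\]
Since $|\operatorname{tr}(M_k)| \le 2\|M_k\|$ and $|\det(M_k)| \le \|M_k\|^2$ for $2 \times 2$ matrices, the previous step gives $\operatorname{tr}(M_k) + \det(M_k) = o(1)$, which is exactly the claimed asymptotic $\det A_k = \det(C)\det(D)\det(B_k)\,(1 + o(1))$. (Note that only $\|B_k^{-1}\|\to 0$ is used here; the growth hypotheses $\|B_k\| \to \infty$ and $\det B_k \to \infty$ are not needed for this conclusion, though they are natural in the applications.)

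For the uniform bound, I would choose $k_0$ so that $|\operatorname{tr}(M_k) + \det(M_k)| \le 1$ for all $k \ge k_0$. Using $|\det E| \le \|E\|^2$ for any $2 \times 2$ matrix $E$ (with the Frobenius norm), this yields
\[
|\det A_k| \le 2\,|\det C|\,|\det D|\,|\det B_k| \le 2\,\|C\|^2\,\|D\|^2\,|\det B_k|, \qquad k \ge k_0 .
\]
There remain only finitely many indices $1 \le k < k_0$; for each of these $\det B_k \ne 0$ (which is automatic in the applications of the lemma, where the $B_k = P_k(E)$ are positive definite), so it suffices to take
\[
\gamma' := \max\Bigl\{\, 2,\ \max_{1 \le k < k_0} \frac{|\det A_k|}{\|C\|^2\,\|D\|^2\,|\det B_k|} \,\Bigr\} < \infty ,
\]
which gives $|\det A_k| \le \gamma'\,\|C\|^2\,\|D\|^2\,|\det B_k|$ for every $k \ge 1$.

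The argument is essentially a direct computation, so I do not expect a genuine obstacle; the only point requiring a little care is the passage from the asymptotic formula (valid only for large $k$) to an estimate uniform over all $k \ge 1$, which is handled simply by absorbing the finitely many initial indices into the constant $\gamma'$.
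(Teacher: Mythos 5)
Your proof is correct and follows essentially the same route as the paper: both factor out $CB_kD$, reduce to $\det(I+M_k)$ with $\|M_k\|\to 0$, and apply the $2\times 2$ identity $\det(I+M)=1+\operatorname{tr}(M)+\det(M)$. In fact you are slightly more complete, since you also derive the ``in particular'' uniform bound over all $k\ge 1$ (correctly noting the need for $\det B_k\neq 0$ at the finitely many initial indices), a step the paper's proof leaves implicit.
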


\begin{proof}
We begin by rewriting $A_k$ as
\[
A_k = C B_k D + R_k = C \left(B_k + C^{-1} R_k D^{-1} \right) D.
\]
Set, for each $k \geq 1$,
\[
Q_k := C^{-1} R_k D^{-1}.
\]
Since $R_k$ is uniformly bounded and $C$, $D$ are fixed and invertible, the sequence $\{Q_k\}$ is also uniformly bounded. Moreover, since $\|B_k^{-1}\| \to 0$,  then 
$\|B_k^{-1} Q_k\| \to 0$ as $k \to \infty$.

On the other hand, one gets
\[
\det(A_k) = \det(C) \cdot \det(B_k) \cdot \det\left(I + B_k^{-1} Q_k\right) \cdot \det(D).
\]

In order to estimate $\det(I + B_k^{-1} Q_k)$, we use the following expansion of the determinant; for all $k$,
\[
\det\left(I + B_k^{-1} Q_k\right) = 1 + \det(B_k^{-1} Q_k) + \operatorname{Tr}(B_k^{-1} Q_k), 
\]
so 
\[
\det\left(I + B_k^{-1} Q_k\right) = 1 + o(1), \, k \to \infty.
\]

Thus, for all sufficiently large $k$,
\[
\det(A_k) = \det(C) \cdot \det(D) \cdot \det(B_k) \cdot (1 + o(1)),
\]
and we are done.
\end{proof}

We proceed with the proof of Theorem \ref{transferthm2}. First, we prove part $(i)$. By one of the identities stated in Theorem \ref{transferthm}, we write, for each $E \in \sigma(H^\kappa)$ and each \( n \geq 1 \),
\begin{eqnarray*} 
T^\kappa(E, 2n-1, 0) &=& T(E, 2n-1, 0)(I - Q(E)) - R_{2n-1}(E), \\
(T^\kappa)^*(E, 2n-1, 0) &=& (I - Q(E))^* T^*(E, 2n-1, 0) - R_{2n-1}^*(E),
\end{eqnarray*}
with \( \|R_{2n-1}\| \leq \eta |\kappa| (a + 2 \|V\|_\infty)^{(3 - \tau) (2n-1)} \). Therefore, by Lemma \ref{02teclemma},
\begin{eqnarray*} 
&\,&(T^\kappa)^*(E, 2n-1, 0) T^\kappa(E, 2n-1, 0)\\ &=& (I - Q(E))^* T^*(E, 2n-1, 0) T(E, 2n-1, 0)(I - Q(E)) \\
&& +\; P_{2n-1}(E)
\end{eqnarray*}
with \( \|P_{2n-1}\| \leq \eta' |\kappa| (a + 2 \|V\|_\infty)^{(4 - \tau) (2n-1)} \) for some $\eta'>0$. Thus,
\begin{eqnarray}\label{eqeqeqeq01010101} 
P_k^\kappa(E) = (I - Q(E))^* P_k(E)(I - Q(E)) + \sum_{n=1}^k P_{2n-1}(E).
\end{eqnarray}
From the identity above, one of the inequalities in Theorem \ref{transferthm2} $(i)$ follows. The other follows similarly from the second identity in Theorem \ref{transferthm}.

Now, we prove part $(ii)$. By identity \eqref{eqeqeqeq01010101} and Lemma \ref{03teclemma}, so it suffices to show that \( (I - Q(E)) \) is invertible and that \( \|(I - Q(E))^{-1}\| \leq c \) for some \( c > 0 \), independent of \( E \in \sigma(H^\kappa) \). Namely, let \( E \in \sigma(H^\kappa) \). Recall above that, for each \( n \geq 1 \),
\begin{eqnarray*}\nonumber 
&\,&(I - Q(E))\\ &=& T^{-1}(E, 2n-1, 0)T_\kappa(E, 2n-1, 0) + T^{-1}(E, 2n-1, 0)R_{2n-1} \\
&=& T^{-1}(E, 2n-1, 0)T_\kappa(E, 2n-1, 0)\left( I + T_\kappa^{-1}(E, 2n-1, 0)R_{2n-1} \right)
\end{eqnarray*}
with \( \|R_{2n-1}\| \leq \gamma (a + 2\|V\|_\infty)^{(3 - \tau)(2n - 1)} \)  and \( \|Q(E)\| \leq \gamma \) for some \( \gamma > 0 \), independent of \( E \in \sigma(H^\kappa) \).

Since, by Lemma \ref{02teclemma},
\begin{eqnarray*}
\|T_\kappa^{-1}(E, 2n-1, 0)R_{2n-1}\| &\leq& \|T_\kappa^{-1}(E, 2n-1, 0)\| \cdot \|R_{2n-1}\| \\
&=& \|T_\kappa(E, 2n-1, 0)\| \cdot \|R_{2n-1}\| \\
&\leq& \gamma' (a + 2\|V\|_\infty)^{(4 - \tau)(2n-1)}, \quad \gamma' > 0, \, \tau > 4,
\end{eqnarray*}
then for all \( n > n_0 \), by the Neumann series, \( \left( I + T_\kappa^{-1}(E, 2n-1, 0)R_{2n-1} \right) \) is invertible and  
\[
\left\| \left( I + T_\kappa^{-1}(E, 2n-1, 0)R_{2n-1} \right)^{-1} \right\| < 2.
\]
This implies that \( (I - Q(E)) \) is invertible. Finally, for all \( n > n_0 \),
\begin{eqnarray*}\nonumber 
&\,&\left\| \left( T^{-1}(E, 2n-1, 0)T_\kappa(E, 2n-1, 0) \right)^{-1} \right\| \\
&=& \left\|T^{-1}(E, 2n-1, 0)T_\kappa(E, 2n-1, 0) \right\| \\
&\leq& \|(I - Q(E))\| \cdot \left\| \left( I + T_\kappa^{-1}(E, 2n-1, 0)R_{2n-1} \right)^{-1} \right\| \\
&\leq& 2(1 + \gamma),
\end{eqnarray*}
which implies that \( \|(I - Q(E))^{-1}\| \leq 4(1 + \gamma) \). This concludes the proof of the theorem.
\hfill \qedsymbol


\appendix

\section{Auxiliary results}\label{appendix}

This appendix contains the proofs of auxiliary results used throughout the text.

\begin{lemma}\label{lemmaderrando}
Let $\triangle_\beta$ be the discrete Laplacian on $\ell^2(\mathbb{Z}^+)$ with boundary condition
\[
u(0) \cos(\beta) + u(1) \sin(\beta) = 0, \quad \beta \in \left(-\frac{\pi}{2}, \frac{\pi}{2}\right).
\]
The Radon-Nikodym derivative of the spectral measure $\mu_{\delta_1}^\beta$ is:
\[
\rho_{\delta_1}^\beta(x) = \frac{\cos^2(\beta) \, \sqrt{4 - x^2}}{\pi \left(2 + \sin(2\beta) \, x \right)}, \quad x \in (-2, 2).
\]
\end{lemma}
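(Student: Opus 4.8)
The plan is to identify $\mu_{\delta_1}^\beta$ through its Borel transform, the Weyl--Titchmarsh function $m_\beta^+(z) = \langle \delta_1, (\triangle_\beta - z)^{-1}\delta_1\rangle$, and then recover the density on $(-2,2)$ from the Stieltjes inversion formula $\rho_{\delta_1}^\beta(x) = \tfrac1\pi \lim_{\epsilon\downarrow 0}\Im m_\beta^+(x + i\epsilon)$. The key structural observation is that $\triangle_\beta$ is a rank-one perturbation of the Dirichlet Laplacian $\triangle_0$: since $|\beta|<\pi/2$, extending any $u\in\ell^2(\mathbb{Z}^+)$ by $u(0) := -\tan(\beta)\, u(1)$ turns the boundary condition into the identity $(\triangle_\beta u)(1) = u(2) - \tan(\beta)\, u(1)$, while $(\triangle_\beta u)(n) = u(n+1)+u(n-1)$ for $n\ge 2$; hence $\triangle_\beta = \triangle_0 - \tan(\beta)\,\langle\delta_1,\cdot\rangle\delta_1$.

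First I would compute the Dirichlet $m$-function $m_0^+(z) = \langle\delta_1,(\triangle_0 - z)^{-1}\delta_1\rangle$ by solving the resolvent equation directly. Writing $(\triangle_0 - z)G = \delta_1$ with $G(0)=0$: for $n\ge 2$ the Green function obeys the free recursion, so $G(n) = c\,\lambda^n$ with $\lambda = \lambda(z)$ the root of $\lambda + \lambda^{-1} = z$ satisfying $|\lambda|<1$, and the equation at $n=1$ forces $c = -1$; thus $m_0^+(z) = G(1) = -\lambda(z)$. For $x\in(-2,2)$ the branch with $\Im m_0^+>0$ gives $m_0^+(x+i0) = \tfrac12\big(-x + i\sqrt{4-x^2}\big)$, which already reproduces the $\beta=0$ case.

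Then I would invoke the Aronszajn--Krein rank-one resolvent formula $m_\beta^+(z) = \dfrac{m_0^+(z)}{1 - \tan(\beta)\, m_0^+(z)}$, and take imaginary parts using that for real $a,b,c,d$ with $ad-bc=1$ one has $\Im\!\big(\tfrac{aw+b}{cw+d}\big) = \dfrac{\Im w}{|cw+d|^2}$, so that $\Im m_\beta^+(x+i0) = \dfrac{\Im m_0^+(x+i0)}{\,|1 - \tan(\beta)\, m_0^+(x+i0)|^2\,}$. The computation then reduces to the elementary identity $|1 - \tan(\beta)\, m_0^+(x+i0)|^2 = \sec^2(\beta) + x\tan(\beta) = \dfrac{2 + x\sin(2\beta)}{2\cos^2\beta}$ (expand the modulus and use $1+\tan^2\beta = \sec^2\beta$), whose right-hand side is strictly positive on $(-2,2)$ since $|x\sin(2\beta)|<2$. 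Combining this with $\Im m_0^+(x+i0) = \tfrac12\sqrt{4-x^2}$ yields $\rho_{\delta_1}^\beta(x) = \tfrac1\pi\Im m_\beta^+(x+i0) = \dfrac{\cos^2\beta\,\sqrt{4-x^2}}{\pi\,(2+\sin(2\beta)\,x)}$. Because $\Im m_\beta^+(x+i\epsilon)$ remains bounded and converges to this continuous function uniformly on compact subsets of $(-2,2)$, the spectral measure has no singular component there and the stated density is the correct one (point masses can only occur at energies outside $[-2,2]$, which are not covered by the statement).

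I do not expect a genuine obstacle here, as the content is classical; the only delicate points are bookkeeping — getting the sign $\kappa = -\tan(\beta)$ and the square-root branch right, and citing Stieltjes inversion to pass from boundary values of the Herglotz function to the absolutely continuous density on $(-2,2)$ (in particular ruling out a singular part there). One could alternatively derive the same formula from Carmona's Theorem~\ref{CarmonaTHM} together with Lemma~\ref{teclemmamainthm}, by averaging the rapidly oscillating quantity $\|T_\triangle(x,n,0)u_\beta\|^{-2}$ against continuous test functions and using $\tfrac1{2\pi}\int_0^{2\pi}\tfrac{dy}{1+c\cos y} = (1-c^2)^{-1/2}$; the subtle point in that route is keeping track of the correct $\cos^2\beta$ normalization of the initial vector, which is precisely what distinguishes $\mu_{\delta_1}^\beta$ from the measure generated by the unit vector $(\cos\beta,-\sin\beta)$, and which the $m$-function argument handles automatically.
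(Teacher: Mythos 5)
Your proposal is correct: the end formula, the branch of the square root, the sign $\kappa=-\tan\beta$, and the algebra $|1-\tan(\beta)\,m_0^+(x+i0)|^2=\sec^2\beta+x\tan\beta=\frac{2+x\sin(2\beta)}{2\cos^2\beta}$ all check out, and your $m_\beta^+$ coincides with the Green's function the paper obtains. The overall architecture is the same as the paper's (compute the Borel transform of $\mu_{\delta_1}^\beta$ explicitly, then pass to boundary values via Stone's formula/Stieltjes inversion), but the middle step is genuinely different. The paper solves the resolvent equation $(\triangle_\beta-z)u=\delta_1$ directly with the general boundary condition: it takes the decaying solution $u(n)=Ae^{-in\theta}$, $z=2\cos\theta$, $\Im\theta<0$, determines $A$ from $u(0)\cos\beta+u(1)\sin\beta=0$, and reads off $G_\beta(z)=-\cos\beta/(\sin\beta+e^{i\theta}\cos\beta)$ in one shot. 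You instead compute only the Dirichlet $m$-function $m_0^+(z)=-\lambda(z)$ and then transport it to general $\beta$ via the rank-one decomposition $\triangle_\beta=\triangle_0-\tan(\beta)\langle\delta_1,\cdot\rangle\delta_1$ and the Aronszajn--Krein formula; substituting $\lambda=e^{-i\theta}$ shows the two expressions for $m_\beta^+$ agree. Your route imports a standard lemma but makes the rank-one structure explicit (which the paper itself exploits elsewhere, e.g.\ in the proof of Corollary 2.2), and it cleanly explains where the Möbius action $m_\beta^+=R_{-\beta/2\pi}\cdot m^+$ quoted in Section 1.2 comes from; the paper's route is self-contained and avoids any perturbation formula. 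Your closing remarks -- positivity of the denominator on $(-2,2)$, absence of a singular component there because $\Im m_\beta^+(x+i\epsilon)$ stays bounded, and the caveat about possible eigenvalues outside $[-2,2]$ -- are all accurate and slightly more careful than the paper's write-up.
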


\begin{proof}
The outline of this proof is an exercise in \cite{Carmona2}, page 171. Let's compute 
\[
u = (\triangle_\beta - z)^{-1} \delta_1
\]
to obtain the Green's function; that is, we want to find \( u \in \ell^2(\mathbb{Z}^+) \) such that, for every \( n \geq 1 \),
\begin{equation}\label{eqderec}
u(n+1) + u(n-1) - z u(n) = \delta_{n1},
\end{equation}
with the boundary condition
\[
u(0) \cos(\beta) + u(1) \sin(\beta) = 0.
\]

For \( n \geq 2 \), the equation \eqref{eqderec} is
\[
u(n+1) + u(n-1) - z u(n) = 0
\]
with general solution
\[
u(n) = A e^{-i n \theta} + B e^{i n \theta}, \quad \text{with } z = 2 \cos(\theta).
\]
To ensure that \( u \in \ell^2(\mathbb{Z}^+) \) and \( \operatorname{Im} z > 0 \), we require \( \operatorname{Im} \theta < 0 \), which implies \( B = 0 \). Thus, we found the following solution for \eqref{eqderec}
\[
u(n) = A e^{-i n \theta}, \quad n \geq 1.
\]

We apply the boundary condition to obtain \( u(0) \), and then
\[
A = -\frac{\cos(\beta)}{\cos(\beta) + e^{-i\theta} \sin(\beta)}.
\]

Through the identities above, we obtain the Green's function
\[
G_\beta(z) = u(1) = A e^{-i \theta} = -\frac{\cos(\beta)\, e^{-i\theta}}{\cos(\beta) + e^{-i\theta} \sin(\beta)}.    
\]

Equivalently,
\begin{equation}\label{grenf}
G_\beta(z) = -\frac{\cos(\beta)}{\sin(\beta) + e^{i\theta} \cos(\beta)}.
\end{equation}

Finally, for \( x \in (-2,2) \), we write \( z = x + i\varepsilon \), compute \( \operatorname{Im} G_\beta(x + i\varepsilon) \), and take the limit \( \varepsilon \to 0^+ \). Thus, for every \( x = 2\cos(\theta) \) and \( \theta \in (0,\pi) \),
\[
\operatorname{Im} G_\beta(x + i0^+) = \cos^2(\beta) \cdot \frac{\sin(\theta)}{1 + \sin(2\beta)\cos(\theta)}.
\]
Substituting \( x = 2\cos(\theta) \), we get
\[
\sin(\theta) = \sqrt{1 - (x/2)^2} = \frac{\sqrt{4 - x^2}}{2}.
\]
Hence, by Stone's formula,
\[
\rho_{\delta_1}^\beta(x) = \frac{\cos^2(\beta) \cdot \sqrt{4 - x^2}}{\pi(2 + \sin(2\beta) x)}.
\]
\end{proof}

\noindent {\bf Proof of Lemma \ref{teclemmamainthm}}

The eigenvalues of $A$ are complex conjugate numbers of modulus 1:
\[
\lambda_{\pm} = e^{\pm i \theta},
\]
with $\theta = \arccos\left( \frac{x}{2} \right) \in (0, \pi)$. The corresponding eigenvectors $v_{\pm}$ satisfy
\[
A v_{\pm} = e^{\pm i \theta} v_{\pm}.
\]
A direct computation yields $A = P D P^{-1}$, where 
\[
P = \begin{pmatrix}
1 & 1 \\
e^{-i \theta} & e^{i \theta}
\end{pmatrix} \quad {\rm and} \quad D = \begin{pmatrix}
e^{i \theta} & 0 \\
0 & e^{-i \theta}
\end{pmatrix}.
\]
Now, 
\[
P^{-1} (\cos(\beta), -\sin(\beta))^{\mathrm T} = \frac{1}{2i \sin(\theta)}
\begin{pmatrix}
e^{i\theta} \cos(\beta) + \sin(\beta) \\
 -e^{-i\theta} \cos(\beta) - \sin(\beta)
\end{pmatrix},
\]
from which it follows that
\begin{eqnarray*}
&\,& \left\| P^{-1} (\cos(\beta), -\sin(\beta))^{\mathrm T} \right\|^2\\ &=& \left( \frac{1}{2 \sin(\theta)} \right)^2 \left( \left| e^{i\theta} \cos(\beta) + \sin(\beta) \right|^2 + \left| -e^{-i\theta} \cos(\beta) - \sin(\beta) \right|^2 \right) \\
&=& \frac{1}{4 \sin^2(\theta)} \cdot 2 \left| e^{i\theta} \cos(\beta) + \sin(\beta) \right|^2 \\
&=& \frac{1}{2 \sin^2(\theta)} \left[ (\cos(\theta) \cos(\beta) + \sin(\beta))^2 + (\sin(\theta) \cos(\beta))^2 \right] \\
&=& \frac{1}{2 \sin^2(\theta)} [ \cos^2(\theta) \cos^2(\beta)\\
&+& 2 \cos(\theta) \cos(\beta) \sin(\beta) + \sin^2(\beta) + \sin^2(\theta) \cos^2(\beta) ] \\
&=& \frac{1}{2 \sin^2(\theta)} [ (\cos^2(\theta) + \sin^2(\theta)) \cos^2(\beta)\\
&+& 2 \cos(\theta) \cos(\beta) \sin(\beta) + \sin^2(\beta) ] \\
&=& \frac{1}{2 \sin^2(\theta)} \left[ \cos^2(\beta) + \sin^2(\beta) + 2 \cos(\theta) \cos(\beta) \sin(\beta) \right] \\
&=& \frac{1 + \sin(2\beta) \cos(\theta)}{2 \sin^2(\theta)}.
\end{eqnarray*}
Finally, since \( D \) is a unitary operator, for every $n \geq 1$, one has
\[
\left\| D^n P^{-1} (\cos(\beta), -\sin(\beta))^{\mathrm T} \right\|^2 = \frac{1 + \sin(2\beta) \cos(\theta)}{2 \sin^2(\theta)}.
\]
\hfill \qedsymbol

\ 

\noindent {\bf Proof of Lemma \ref{01teclemma}}

The result follows by induction. For \(n = 1\), the left-hand side is
\[
F_1F_0 - G_1G_0.
\]
The right-hand side is
\[
\sum_{k=0}^1 \left( \prod_{j=k+1}^1 F_j \cdot (F_k - G_k) \cdot \prod_{j=0}^{k-1} G_j  \right).
\]
For \(k = 0\), we have
\[
 \prod_{j=1}^1 F_j = F_1 \quad \text{and} \quad \prod_{j=0}^{-1} G_j = 1 \quad \text{(empty product)}.
\]
Thus, the term for \(k = 0\) is
\[
F_1  \cdot (F_0 - G_0) \cdot 1= F_1 F_0 - F_1 G_0.
\]
The term for \(k = 1\) is
\[
(F_1 - G_1)  \cdot G_0  = F_1 G_0 - G_1 G_0.
\]
Hence, for \(n = 1\), the formula holds.

Now, assume that the formula holds for \(n\). For $n+1$, the left-hand side is
\begin{eqnarray*}
\prod_{j=0}^{n+1} F_j - \prod_{j=0}^{n+1} G_j &=& \left( F_{n+1} \prod_{j=0}^n F_j \right) - \left(G_{n+1} \cdot \prod_{j=0}^n G_j \right) \\ &=&  F_{n+1} \cdot \left(\prod_{j=0}^n F_j - \prod_{j=0}^n G_j\right)  + (F_{n+1} - G_{n+1}) \cdot \prod_{j=0}^n G_j.    
\end{eqnarray*}
Using the induction hypothesis, one has
\begin{eqnarray*}
\prod_{j=0}^{n+1} F_j - \prod_{j=0}^{n+1} G_j &=& F_{n+1} \cdot \left[\sum_{k=0}^n \left( \prod_{j=k+1}^n F_j \cdot (F_k - G_k) \cdot \prod_{j=0}^{k-1} G_j  \right)\right]\\   &+& (F_{n+1} - G_{n+1}) \cdot \prod_{j=0}^n G_j .    
\end{eqnarray*}
Including \(F_{n+1}\) in the product $\prod_{j=k+1}^n F_j$ and including the additional term for \(k = n+1\), this becomes
\[
\prod_{j=0}^{n+1} F_j - \prod_{j=0}^{n+1} G_j = \sum_{k=0}^{n+1} \left( \prod_{j=k+1}^{n+1} F_j \cdot (F_k - G_k) \cdot \prod_{j=0}^{k-1} G_j  \right).
\]
\hfill \qedsymbol


\begin{center} \Large{Acknowledgments} 
\end{center}

\noindent  M. Aloisio thanks the partial support by FAPEMIG (Minas Gerais state agency; under contract 01/24/APQ-03132-24) and C. R. de Oliveira thanks the partial support by CNPq (a Brazilian government agency, under contract 303689/2021-8). The authors are grateful to Grzegorz \'Swiderski for sharing the references \cite{kreimerLS2009,lukic2019}. Also to Zhenfu Wang for reading this version of the paper and for their valuable and constructive suggestions.


\end{document}